\numberwithin{equation}{section}
\newtheorem{theorem}{Theorem}[section] 
\newtheorem{lemma}[theorem]{Lemma}
\newtheorem{remark}[theorem]{Remark}
\newtheorem{assumption}[theorem]{Assumption}
\newcommand\norm[1]{\left\lVert#1\right\rVert}
\newcommand{\tu}{\textup}
\newcommand{\dd}{\displaystyle}
\newcommand{\bfa}[1]{\boldsymbol{#1}} 			%
\newcommand{\e}{\epsilon}
\newcommand{\ddiv}{\text{div}}     				%
\definecolor{black}{rgb}{0,0,0}
\definecolor{red}{rgb}{1,0,0}
\definecolor{blue}{rgb}{0,0,1}
\numberwithin{equation}{section}
\renewcommand{\div}{\mathop{\rm div}\nolimits}
\newcommand{\xoe}{{\bfa{x}\over\epsilon}}
\newcommand{\ep}{\epsilon}
\newcommand{\todo}[1]{{\color{red}{#1}}}
\newcommand{\jrp}[1]{{\color{orange}{#1}}}
\newcommand{\dx}{ \mathrm{d}x}
\newcommand{\dy}{ \mathrm{d}y}
\newcommand{\dt}{ \mathrm{d}t}
\newcommand{\beq}{\begin{equation}}
\newcommand{\eeq}{\end{equation}}
\newcommand{\beqq}{\begin{equation*}}
\newcommand{\eeqq}{\end{equation*}}
\newcommand{\beqas}{\begin{eqnarray*}}
\newcommand{\eeqas}{\end{eqnarray*}}
\newcommand{\bsp}{\begin{split}}
\newcommand{\eesp}{\end{split}}
\author[J.S.R. P\MakeLowercase{ark}, S.W. C\MakeLowercase{heung},  T. M\MakeLowercase{ai}]
{J\MakeLowercase{un} S\MakeLowercase{ur} R\MakeLowercase{ichard} P\MakeLowercase{ark$^a$}, 
S\MakeLowercase{iu} W\MakeLowercase{un} C\MakeLowercase{heung$^b$}, 
 T\MakeLowercase{ina} M\MakeLowercase{ai$^{c,d,*}$}}
\date{\today}
\title[M\MakeLowercase{ultiscale simulations for multi-continuum} R\MakeLowercase{ichards equations}]
{\textsf{\LARGE M\MakeLowercase{ultiscale simulations for multi-continuum} R\MakeLowercase{ichards equations}}$^1$\footnote{$^1$A\MakeLowercase{ccepted manuscript by} J\MakeLowercase{ournal of} C\MakeLowercase{omputational and} A\MakeLowercase{pplied} M\MakeLowercase{athematics} (2021).  T\MakeLowercase{he doi of published journal article: \url{https://doi.org/10.1016/j.cam.2021.113648}}.}}
\begin{document}

\begin{abstract}
In this paper, we study a multiscale method for simulating a dual-continuum unsaturated flow problem within complex heterogeneous fractured porous media.  Mathematically, each of the dual continua is modeled by a multiscale Richards equation (for pressure head), and these equations are coupled to one another by transfer terms.  On its own, Richards equation is already a nonlinear partial differential equation, and it is exceedingly difficult to solve numerically due to the extra nonlinear dependencies involving the soil water.  To deal with multiple scales, our strategy is that starting from a microscopic scale, we upscale the coupled system of dual-continuum Richards equations via homogenization by the two-scale asymptotic expansion, to obtain a homogenized system, at an intermediate scale (level).  Based on a hierarchical approach, the homogenization's effective coefficients are computed through solving the arising cell problems.  To tackle the nonlinearity,  after time discretization, we use Picard iteration procedure for linearization of the homogenized Richards equations.  At each Picard iteration, some degree of multiscale still remains from the intermediate level, so we utilize the generalized multiscale finite element method (GMsFEM) combining with a multi-continuum approach, to upscale the homogenized system to a macroscopic (coarse-grid) level.  This scheme involves building uncoupled and coupled multiscale basis functions, which are used not only to construct coarse-grid solution approximation with high accuracy but also (with the coupled multiscale basis) to capture the interactions among continua.  These prospects and convergence are demonstrated by several numerical results for the proposed method. 
\end{abstract}

\maketitle

%%%%%
\noindent \textbf{Keywords.}  Nonlinear Richards equations;
Unsaturated flow;
Coupled system; Multi-continuum; Upscaling; Hierarchical finite element;
Heterogeneous fractured porous media;
Multiscale method;
GMsFEM

\vskip10pt

\noindent \textbf{Mathematics Subject Classification.} 65N30, 65N99
%brown13: 74Qxx, 74Q15, 74S05, 76D07, 74F10
%
%35Q74 PDEs in connection with mechanics of deformable solids
%35J60 Nonlinear elliptic equations
%65N12 Stability and convergence of numerical methods
%65N15 Error bounds
%65N30 Finite elements
%65N99 PDE, BVP, none of above
%74B20 Nonlinear elasticity
%74Q05 Homogenization in equilibrium problems

%%%%%contact%%%%%

%\vfill
%\newpage

%\noindent Contact:

%\vfill

\vspace{8pt}

\noindent $^a$\textit{Jun Sur Richard Park}; Department of Mathematics, The University of Iowa,
Iowa City, IA, USA; junsur-park@uiowa.edu 

\vspace{5pt}

\noindent $^b$\textit{Siu Wun Cheung}; Center for Applied Scientific Computing, Lawrence
Livermore National Laboratory, Livermore, CA 94550, USA; cheung26@llnl.gov

\vspace{5pt}

\noindent $^{*}$Corresponding author: \textit{Tina Mai}; $^c$Institute of Research and Development, Duy Tan University, Da Nang, 550000, Vietnam; $^d$Faculty of Natural Sciences, Duy Tan University, Da Nang, 550000, Vietnam; maitina@duytan.edu.vn 

%\footnote{This version was accepted by Journal of Computational and Applied Mathematics (2021).  The online version is available with doi: \url{https://doi.org/10.1016/j.cam.2021.113648}.}

%%%%%

\newpage

%\tableofcontents

%%%%%

%\todo{\textbf{Suggested by Dr. Efendiev from our meeting on June 9, 2020:}

%+ More figures in numerical examples. 

%+ Test: Even few basis functions, the results are similar.

%+ Write 4-5 sentences before each Appendix to introduce, and some sentences before each Lemma, Theorem to explain the need of each.}
%%%%%
%To get this result, we can also assume that $Q_i$ is a function decreasing with time (see \cite{mass0t,masst}, for instance).}

% Q/(e^(Qt)) L'Hospital rule => 1/(Q(e^(Qt)) -> 0 if Q,t -> \infty

%(concentration when we talk about transport diffusion equation)

%The reaction terms must satisfy the mass conservation law.  Yes, Q is constructed considering Darcy's law when our solution is pressure.

%Wikipedia: Diffusion is the net movement of anything (for example, atom, ions, molecules) from a region of higher concentration to a region of lower concentration. Diffusion is driven by a gradient in concentration.

%%%%%%%%%%%%%%
%\section{1st choice of system (reference : \cite{santos2006hydraulic})}
%\beq
%\bsp
%&D_t \theta(p_1^\ep) - \div (k_1^\ep g(p_1^\ep) D_x (p_1^\ep)) = 0\,,\\
%&D_t \theta(p_2^\ep) - \div (k_2^\ep g(p_2^\ep) D_x (p_2^\ep)) = 0\,.
%\end{split}
%\eeq
%\todo{Boundary and initial conditions?}\\
%%%%%%%%

\section{Introduction}\label{intro}
%Mekong Delta

 %It is the most dynamic region
%of the subsurface, as changes occur at increasingly smaller
%time and spatial scales when moving from the groundwater
%toward the soil surface. 

Soil moisture predictions have imperatively drawn attention not only in agriculture but also in hydrology, environment, energy balances and global climate forecast, etc. %(\cite{ry2}).
%as well as many other areas in science and engineering.  
The involved processes are mathematically modeled by unsaturated flow, namely, Richards equation \cite{r0,richarde1,richarde2,richarde3,richardsreview}, which delineates the seepage
of water into some porous media having pores filled with water and air \cite{ry1}.  Moisture close to the soil surface is mainly affected by precipitation together with evaporation that are strongly coupled in nonlinear manners, which leads to our considering coupled Richards equations.  
%(which is also motivated by the linear case of coupling equations in our previous work \cite{mcl})
It is noticed from \cite{richardsreview} that analytical solutions of Richards equation can be found only with restrictive assumptions, 
%(see \cite{rexist}, for instance), 
so most applied problems demand a numerical solution in one or two or three dimensions, especially in our setting of coupled equations.   One of the most interesting features of the Richards equation is that regardless of its simple
derivation, it is arguably one of the hardest equations to find 
reliable and accurate numerical solution in all of hydrosciences \cite{richardsreview}.  Moreover, the porous media can possess complex heterogeneous rock characteristics, intricate geography of
fractures, %arrangement,
multi-continuum background, high contrast and multiple scales, etc.  

Among various challenges, the fractures' high permeability can significantly influence the fluid flow processes and raise a requirement for a specific strategy in order to build mathematical model and computational approaches.  One of the earliest strategies is the hierarchical model, which is employed to characterize the multiscale fractures interacting with porous materials \cite{mcontinua17}. Now, given at hand a multi-continuum strategy
%defined after homo paragraph
\cite{baren,warren1963behavior,kazemi1976numerical,wu1988multiple,pruess1982fluid}, we can represent and analyze complicated processes with multiple scales of heterogeneity or fractures.     
%In our paper, the first continuum is used to describe flow in a system of small scale highly connected fractures (natural ones or well-developed ones or highly developed ones) with only global effects (since unresolved fractures are not resolved by fine mesh), the second continuum (matrix) is used to describe the flow of a fluid in the porous
%medium with global effects (\cite{mcl}). 
%They coexist everywhere
%in our computational domain, while they interact with each other via mass transfer (see Figure 2 for
%an illustration). Without of loss of generality, we assume that all continua interact with each other.
%In shale gas transport, such models are used to describe
%complex interaction between organic and inorganic materials \cite{organic17}. 
Generally, in each continuum, different fluid flow models can be used. For example, in \cite{darcy-forchheimer}, the mathematical model allows
coupling Darcy--Forchheimer flow in the fractures with Darcy flow in the matrix.

In this work, we focus on a multi-continuum model for unsaturated flow, which arises from the coupled
system of nonlinear Richards equations, in complex multiscale fractured porous media. Our considering dual-continuum background thus consists of a system of small scale highly connected fractures (the so-called natural ones or well-developed ones or highly developed ones) as the first continuum and a matrix as the second continuum \cite{baren}.  In such heterogeneous multi-continuum media, the simulation of nonlinear fluid flow as coupled Richards equations is even more challenging, primarily because of the nonlinearity, different properties of continua, high contrast, multiple scales, and mass transfer among continua and a variety of scales in many forms. 

%\noindent \textit{The need for model reduction.} 
%%%%
To solve those difficult problems, one needs some kind of model reduction for flow simulation.  Conventional methods involve dividing the considered domain into coarse-scale grid blocks, where effective properties in each coarse block are computed \cite{homod}.  This procedure in common upscaling methods based on homogenization employs the fine-scale solutions of local problems in every coarse block or representative volume.  Such computation, however, may not reveal multiple important modes in each coarse block including the continua's interaction.

%%%homogenization discussion%%%
%use asymptotic homo for nl problem: In \cite{homoms}, while homogenization of heterogeneous materials was one of the first multiscale approaches in mechanics, it was originally developed for elastic problems, whereby the small scale can often be eliminated in the computational process. For more complex nonlinear problems, this is obviously not the case.
%https://onlinelibrary.wiley.com/doi/10.1002/9781119176817.ecm2107

%%%multi-continuum approaches%%%%

That downside urged the multi-continuum strategies \cite{baren,arbogast20,warren1963behavior,kazemi1976numerical,wu1988multiple,pruess1982fluid} on coarse grid.  Physically, each continuum is considered as a system (over the whole domain) so that the flow between different continua can be conveniently described.  
%For example, a naturally fractured reservoir can be viewed as two parallel continua: matrix and system of connected fractures.  
In the fine grid, different continua are neighboring.  In the coarse grid, they co-exist (through mean characteristics \cite{baren}) at every point of the region, and they interact with one another.  Mathematically, a couple of equations are formed for each coarse block, and an individual equation corresponds to one of the dual continua on the fine grid.  For example, in fractured media, the flow equations for the system of natural fractures and the matrix are written distinctly with some exchange terms.  Such interaction terms are coupled toward a system of coupled multiscale equations following the mass conservation law.
In order to reach that goal, even when each continuum is not connected topologically, one can assume that it is connected to the other (throughout the type of the coupling and the whole domain), given that it has only global (non-local) effects. 
  %coupled based on the mass conservation law.   
%all over the form of the coupling and the domain.

%%%
%\noindent \textit{Early works on dual or multi-continua.}

%%%%background of dual-continuum%%%%

In these contexts, we now examine (in detail) our chosen dual-continuum background.  For simulating flow in naturally fissured rock, Barenblatt introduced the first dual-porosity model \cite{baren}.  He suggested in that work two continua to describe low and high porosity continua, that is, respectively system of small connected fractures and matrix, which are also used in our paper.  An example of some beginning work on dual continua based on \cite{baren} is \cite{arbogast20} (1990), where homogenization theory was employed, within a naturally fractured medium, to obtain a general form of the double-porosity model representing single-phase flow.  For each continuum, both inter and intraflow exchanges are taken into consideration. Basically, the dual-continuum background can be in any form, where the above schemes can be utilized.
%Intra- means within or inside.

%we will only deal with a system of fractures (having global effects), 
% large or coarse-scale fractures, 
%as one continuum,
%which cannot be resolved by fine-grid, 
%in addition to a background continuum. 
%(see \cite{baren}, for instance).

%%%%

For such homogenization theory but now with multiple scales, we contribute in this paper the detailed results (thanks to \cite{rh1,rh2}) on homogenization of the two-scale dual-continuum system of nonlinear Richards equations (which are linear in \cite{rh1,rh2}), with optimal computational cost.  As in \cite{rh2}, the interaction between the continua here is scaled as $1/ \e$ (which was $1/ \e^2$ in \cite{rh1}), and $\e$ stands for the periodically microscopic scale of the material.  This homogenized problem arrives
from the two-scale asymptotic expansion \cite{papa,bakhvalov1989homogenisation,jikov2012homogenization}.  
%\cite{2scale_nl_expand}
For the dual continua, we show that given this scale $1/ \e$ (as in \cite{rh2}) of the interaction term, the coupled
limits (which became only one equation in \cite{rh1} for the scale $1/ \e^2$) occur when letting the microscopic scale $\e$ vanish in the homogenization procedure.  The homogenization's effective coefficients are computed from solutions of four cell problems (as in \cite{rh2}).  Solving these local cell problems at every macroscopic point using the same fine mesh is expensive because there are a huge number of such points.
%micro variable x omitted, but can be there
%macro variable y = x / \e, macro var u

Therefore, our additional contribution is a development of the hierarchical technique from \cite{rh1} (which has not been investigated in \cite{rh2}) for solving that four cell problems using a reasonable number of degrees of freedom while the accuracy essentially remains.  More specifically, we solve the four cell problems using a dense network of macroscopic points.  This scheme takes advantage of the fact that there are similar characteristics of neighboring representative volume elements (RVEs) \cite{rh1, rh2, bridge20,mcl}, which leads to close effective properties of the neighboring RVEs.  Such a hierarchical technique achieves optimal computational complexity by employing different levels of resolution of FE spaces at different macroscopic points, for the four cell problems.  In particular, regarding the cell problems, solutions 
%\bar{N}^i_j(u')
at the
points 
%u'
belonging to lower levels
in the hierarchy,
%l'
which are solved from higher levels of resolution,
%V_{L-l'+1}
are employed to correct %compute the correction for
%via {\bar{N}^i_j}^c solved \eqref{eq:main52'dbc}using \bar{N}^i_j(u')
each solution 
%\bar{N}^i_j(u)
at a nearby macroscopic point
%u 
lying on a higher level in
the hierarchy,
%S^l
which is obtained via a lower level of accuracy (resolution). 
%V_{L-l+1}
This hierarchy of macrogrids of points corresponds to a nest of approximation spaces having different levels of resolution.  More specifically, each level of macroscopic points is appointed to an approximation finite element (FE) space, then these points as well as space are where one solves the cell problems.  We theoretically prove that this hierarchical FE approach reaches the same
level of accuracy as that of the full solve where cell problems at every macroscopic point are solved by FE space with the highest level of
resolution, but the required number of degrees
of freedom is optimal as expected. 
%(number of basis functions) that is equal to that required to solve only one cell problem at the finest level of resolution (apart from a
%possible logarithmic factor). 

In the literature, for other multiscale equations, the hierarchical finite element (FE) scheme has been advanced to solve the cell problems and calculate the effective
coefficients. In \cite{brown13}, the approach was developed for the case of deterministic
two-scale Stokes--Darcy systems within a slowly varying porous material. Later in \cite{brown17}, the hierarchical strategy was applied to a two-scale
ergodic random homogenization problem without any assumption on microscopic periodicity. Based on these achievements, in our paper, we extend the hierarchical approach to the two-scale
dual-continuum nonlinear system where the interaction terms are scaled as $\mathcal{O}(1/ \e)$, with respect to the computation of homogenized coefficients.  
The interaction terms lead to interesting four
cell problems (as in \cite{rh2}), that is, a system of coupled four equations.
%(which were only two equations in \cite{rh1}).

Starting from a microscopic scale $\e$, the resulting homogenized coupled Richards equations are still nonlinear at an intermediate scale.  To tackle the challenge from the nonlinearity, after discretization by time, we apply linearization via traditional Picard iteration (with a reasonable termination criterion) for each time step until the stopped time.  At the current iteration of the linearization, note that (as in \cite{mcl}) some degree of multiscale still remains in the homogenized equations, so direct method (to be discussed later) is not effective.  To overcome the difficulties from such multiple scales as well as high contrast, fractures, heterogeneity, 
%of heterogeneous multi-continuum equations, 
and to reduce computational cost, we utilize our recent numerical strategy \cite{mcl} based on the GMsFEM \cite{G1,chung2016adaptive,cho2017generalized} combining with the dual-continuum approach, to attain coarse-grid (macroscopic) level of the coupled dual-continuum linearized homogenized equations.  Note that the fine-grid scale in our paper (as in \cite{mcl}) is the intermediate scale resulting from the homogenization procedure, so it is different from the fine-grid scale of the GMsFEM in \cite{mcontinua17} and \cite{Spiridonov2019}.
%combine multi-continuum strategies (on coarse grid, vs better than FEM and homogenization, see \cite{mcl}) and GMsFEM. 

Normally, the GMsFEM helps us systematically create multiple multiscale basis functions, by including new basis functions (degrees of freedom) in every coarse block.  Such new basis functions are computed by establishing local snapshots and operating
local spectral decomposition in the snapshot space.  Hence, the obtained eigenfunctions can convey the local properties to the global ones, through the coarse-grid multiscale basis functions.  

It is important to note that the GMsFEM's convergence is related to the eigenvalue decay of the local spectral problems. To construct the global multiscale basis functions, we select certain number of eigenfunctions that correspond to the smallest eigenvalues from each coarse neighborhood. Then the error converges with a rate correlated with $1/\Lambda$, where $\Lambda$ is the minimum of the excluded eigenvalues over all the coarse neighborhoods. This suggests that one needs to include all the multiscale basis functions that correspond to the smallest eigenvalues for a good solution's accuracy (see \cite{chung2016adaptive}, for instance).
%In each coarse block, we select a certain number of local eigenfunctions that correspond to small eigenvalues. These eigenfunctions form the global multiscale approximation space which we call offline space. 
%As emphasized in \cite{Tony11,mcontinua17}, the GMsFEM can automatically detect small discrete fractures (even multiple ones per coarse element) by
%as connected high-conductivity networks
%eigenvectors corresponding to very small eigenvalues, and capture the interaction between each small discrete fracture with its surrounding (matrix or other continua).  In this spirit, in our paper, we assume that some small discrete fractures (if any) are automatically resolved on fine-grid.             

Efficiently, the GMsFEM has been utilized for a variety of multi-continuum problems.  Recently, there has been an example \cite{organic17} regarding shale gas transport in dual-continuum background comprising organic and inorganic media.  With this motivation, a third continuum can be included in dual continua toward triple continua (see \cite{Tony11}, for instance).  On the whole, flow simulation was investigated in heterogeneously varying multicontinua \cite{mcontinua17, Spiridonov2019, ericmultiporoelastic19a} and in fractured porous media \cite{mcontinua17,akkutlu2017,akkutlu-poro-25}. 
%In those examples, discrete fractures were also tackled.

%%%
Before the advent of GMsFEM, several multiscale methods are developed to solve problems with such heterogeneous characteristics, for example, multiscale
finite element method (MsFEM) \cite{Ms,Msnon} (which the GMsFEM directly based on), multiscale finite volume method (MsFVM) \cite{msfvm}, and heterogeneous multiscale methods
(HMM) \cite{hmm}.  More specifically, in \cite{msfem-r16, msfem-r17, GintingThesis}, the authors establish the coarse-grid
approximation thanks to the MsFEM for solving the unsaturated flow problems possessing heterogeneous coefficients.  In \cite{homo1}, upscaling
method is utilized for the Richards equation.  Especially, multiscale methods for flow problems in fractured
porous media are addressed in \cite{akkutlu2017}.  

In light of the GMsFEM's success, there are numerous and vital studies on new model reduction techniques consisting of constraint energy minimizing (CEM) GMsFEM \cite{cem1,cem2f,cemnlporo,cem20} 
%\cite{onlineCEMdg} 2021
and involved numerical methods for multi-continuum systems with fractures \cite{cheung2018constraint} comprising non-local multi-continuum method (NLMC) \cite{vasilyeva2019nonlocal,chung2018non,cem28,nlmc14}.  In NLMC approach, one builds multiscale basis functions from the solutions of some local constrained energy minimizing problems as in the CEM-GMsFEM.  These strategies also efficaciously deal with multiscale as well as high-contrast components in multi-continuum fractured media.   

%%%

Besides such advanced multiscale methods, as mentioned above, traditional direct approach can tackle those difficulties in multi-continuum flow simulation, via fine-grid simulation, in a couple of steps.  First, a locally fine grid is created.  Second, one discretizes the flow equations on that fine grid and derives a  global solution from the set of local solutions.  This scheme can be operated under popular frameworks, for example, the Finite Element Method (FEM) in \cite{fem} and Finite Volume Method (FVM).  However, due to enormous size of computation, the direct numerical simulation of multiple-scale problems is hard even
with the assistance of supercomputers.  By parallel
computing (which utilizes domain decomposition
methods), this hardness can be eased a little \cite{Ms}.  %In particular, parallel computing schemes are employed to solve applied multidimensional problems, in which one utilizes domain decomposition
%methods for the computational masses having distributed store. 
Classical parallel computing
methods, nevertheless, demand rigorous interaction among processes, %towards sending and receiving function values on the subregion
%interfaces, 
whereas the size of the discrete problem still remains.  Hence, multiscale methods are needed,
%\noindent \textit{The need for model reduction.} 
to obtain the small-scale effect on the
large scales (without the desire of solving all the small-scale details) via constructing coarse-grid approximations through offline (precomputed)
multiscale basis functions. 

%%%%%%%%%
%\noindent \textit{The importance of dual and multi-continua.}  

%\noindent \textit{Fine-grid simulation for fractures.}  

%%%%GMsFEM%%%

%In this paper, we develop a GMsFEM for an upscaled multi-continuum system.

%%%Special case, multi-continua can occur at many scales%%%

%The GMsFEM possesses a number of crucial components that can further be used to reach higher exactness and effectiveness.  The first
%component involves the adaptivity, which can be applied to add
%new multiscale basis functions in selected regions.  The second component of the GMsFEM is online basis function space.  These basis functions are computed using the residual information (adaptively in space and time) to accelerate the simulations. 
%(convergence of)

Given a time step and a Picard iteration of the linearization (of the above homogenized nonlinear equations), we present the GMsFEM \cite{G1,g37} %\cite{akkutlu-poro-25,g37} 
for solution of the linearized homogenized equations from the nonlinear unsaturated multi-continuum flow problem in heterogeneous porous fractured materials. We base on the previous works for linear case \cite{mcl} and for saturated flow problem \cite{mcontinua17}. Herein, we respectively extend those work to fractured domain and study
solution of the unsaturated flow problem in two-dimensional case (while our technique can also work for three-dimensional case).  

%\todo{We set nonhomogeneous boundary conditions on
%the top boundary to simulate infiltration processes?} 
%
%\todo{For problems with source or sink, we should build additional
%multiscale basis functions toward accurate calculations on the coarse grid. The extra multiscale basis
%function for the source term is similar to the extra multiscale basis function for the fracture and gives reliable
%approximation of the complex interaction between source and continuum (\cite{Spiridonov2019}).  Richard, did you use this for numerical simulation?}

Within the GMsFEM, we investigate two basis types: uncoupled and coupled multiscale basis.  In the first case (called uncoupled GMsFEM), multiscale basis functions will be established for each continuum independently, by taking into consideration only the permeability and ignoring the transfer terms.  We then employ the GMsFEM presented above.  In the second case (called coupled GMsFEM, which is focused in our paper), multiscale basis functions will be built by solving a coupled problem for snapshot space and operating a spectral decomposition. From this step, the GMsFEM is also applied.  Generally, in the GMsFEM, multiscale
basis functions are constructed in order to automatically locate each continuum through solution of the local spectral problems \cite{continuum38,Msnon,G1}. Now, for coupled
system of equations in multi-continuum models, we solve local coupled system of equations in order to establish highly accurate coupled multiscale basis
functions that describe complex interaction among continua in the coarse-grid level.
   
%Note that a different spectral problem is used for a coupled basis construction

In numerical simulations, within each Picard iteration, we aim at coupling the GMsFEM with the multi-continuum approach.  To demonstrate accuracy and robustness of the proposed coupled GMsFEM, we consider a dual-continuum
background (connected fractures and matrix) model as above. Several numerical examples are presented for two-dimensional test problems.  
%In this case, the multi-continuum approach is used on the fine grid.  
At the last time step and at the end of the Picard iteration procedure, the multiscale solution is compared with the reference fine-scale solution.  Our numerical results (after benefiting both coupled and uncoupled GMsFEM) prove that the GMsFEM solutions converge when we increase the number of local basis functions, and that coupled GMsFEM (for large interaction coefficients) has higher accuracy than uncoupled GMsFEM.  Also, our numerical results show that the GMsFEM is able to incorporate with the dual-continuum approach to reach an accurate solution %(that is even nicer with coupled multiscale basis) 
via only few basis functions, and that the GMsFEM is robust respecting high-contrast coefficients.
%and that the solution attained via the coupled GMsFEM has higher level of accuracy than the one solved via the FEM.  
%(The reason is that due to the heterogeneity of the permeability, a standard FEM solution on coarse mesh cannot preserve all subgrid information.)
Regarding further results about Picard iteration procedure for linearization of the coupled dual-continuum systems of Richards equations, numerically, convergence is guaranteed by a very small termination criterion number.  Theoretically, we also prove the global convergence of this Picard linearization process in Appendix \ref{cp}.

The paper is organized as follows.  In Section \ref{fus}, preliminaries are presented.  Section \ref{sysr} is about formulating the system of dual-continuum coupled nonlinear multiscale Richards equations and the corresponding homogenized system (that we will work with), in heterogeneous fractured porous media.  We provide in Section \ref{pre} fine-scale finite element discretization and Picard iteration procedure for linearization of the homogenized system.  In Section \ref{gms}, the GMsFEM is presented for our homogenized coupled nonlinear system, making use of %coarse together with fine grids as well as 
both uncoupled and coupled GMsFEM.  We show in Section \ref{numer} several numerical results for this homogenized system.  Section \ref{discuss} is regarding open discussion.  Conclusions are congregated in Section \ref{conclude}.  In Appendix \ref{sec:appendix_homogenization}, we give a derivation of the homogenized equations (introduced in Section \ref{sysr}).  Appendix \ref{hier} is devoted to analyzing hierarchical numerical solutions of the cell problems from the homogenization.
Appendix \ref{proofthm} gives a proof of the main Theorem \ref{maintheorem} (about convergence results for the hierarchical solve).  In the last Appendix \ref{cp}, we present a proof for the global convergence of Picard linearization process.   

%%%%%%%%
\section{Preliminaries}\label{fus}
Let $\Omega$ be our bounded computational domain in $\mathbb{R}^d$.  To ease our discussion, we consider $d=2$ in the remaining of the paper, 
but the method can be generalized to $d=3$. 
%\cite{Spiridonov2019}
We refer the readers to \cite{C-G-K, gne, cemnlporo, mcl} for the basic preliminaries.  Latin indices $i,j$ are in the set $\{1,2\}$.  Functions are denoted by italic capitals (e.g., $f$), vector fields in $\mathbb{R}^2$ and $2 \times 2$ matrix fields over $\Omega$ are denoted by bold letters (e.g., $\bfa{v}$ and $\bfa{T}$).  The spaces of functions, vector fields in $\mathbb{R}^2$, and $2 \times 2$ matrix fields defined over $\Omega$ are respectively represented by italic capitals (e.g., $L^2(\Omega)$), 
boldface Roman capitals (e.g., $\bfa{V}$), 
and special Roman capitals (e.g., $\mathbb{S}$).   
%The space of symmetric matrices of order 2 is denoted by $\mathbb{S}^2$.  The subscript $s$ 
%appended to a special Roman capital denotes a space of symmetric matrix fields.

Throughout this paper, the symbol $\nabla$ stands for the gradient with respect to $\bfa{x}$ of a function which only depends on the variable $\bfa{x}$ (or the variables $\bfa{x}$ and $t$).  By $\nabla_x\,,$ we represent the partial gradient with respect to $\bfa{x}$ of a function which depends on $\bfa{x},t$ and other variables as well.  Einstein summation is reflected by repeated indices.  The class $C^0$ consists of all continuous functions.  Spaces of periodic functions come with subscript $\#\,.$   

Consider the space $V: = H_0^1(\Omega) = W_0^{1,2}(\Omega)$.  Its dual space 
(also called the adjoint space) is denoted by 
$H^{-1}(\Omega)\,,$ which consists of continuous linear functionals on $H_0^1(\Omega)$.  The value of a functional $f \in H^{-1}(\Omega)$ at a point 
$v \in H_0^1(\Omega)$ is 
denoted by the inner product $\langle f,v \rangle $.  Whereas, the notation $(\cdot,\cdot)$ stands for the standard $L^2$ inner product.  

The Sobolev norm $\| \cdot \|_{W_0^{1,2}(\Omega)}$ is of the form 
\[\|v\|_{W_0^{1,2}(\Omega)} = \left(\|v\|^2_{L^2(\Omega)} + 
\|\nabla v \|^2_{\bfa{L}^2(\Omega)}\right)^{\frac{1}{2}}\,.\]
Here, $\| \nabla v \|_{\bfa{L}^2(\Omega)}:= \| | \nabla v | \|_{\bfa{L}^2(\Omega)}\,,$ where 
$| \nabla v|$ denotes the Euclidean norm of the $2$-component vector-valued function 
$ \nabla v$; and for $\bfa{v} = (v_1,v_2)$, $\| \nabla \bfa{v}\|_{\mathbb{L}^2(\Omega)}:= \| | \nabla \bfa{v}| \|_{\mathbb{L}^2(\Omega)}\,,$ where 
$| \nabla \bfa{v}|$ denotes the Frobenius norm of the $2 \times 2$ matrix $\nabla \bfa{v}$.  
We recall that the Frobenius norm on $\mathbb{L}^2(\Omega)$ is defined by 
$| \bfa{A} |^2 : = \bfa{A} \cdot \bfa{A} = \text{tr}(\bfa{A}^{\text{T}} \bfa{A})\,.$
%The dual norm to $\| \cdot \|_{W_0^{1,2}(\Omega)}$ is $\| \cdot \|_{W^{-1,2}(\Omega)}$, i.e.,
%\[\| f \|_{W^{-1,2}(\Omega)} = \sup_{v \in W_0^{1,2}(\Omega)}
%\frac{|\langle f,v \rangle|}{\|v\|_{W_0^{1,2}(\Omega)}}\,.\]

The dual norm to $\| \cdot \|_{H_0^1(\Omega)}$ is $\| \cdot \|_{H^{-1}(\Omega)}$, that is,
\[\| f \|_{H^{-1}(\Omega)} = \sup_{v \in H_0^1(\Omega)}
\frac{|\langle f,v \rangle|}{\|v\|_{H_0^1(\Omega)}}\,.\]
 
%bounded, simply connected, open, Lipschitz, convex domain of $\mathbb{R}^2$. 

For every $1 \leq r < 
\infty$, we use $L^r(0,T;X)$ to represent the Bochner space \cite{evans} with the norms 
\[\|\phi\|_{L^r(0,T;X)} := \left(\int_0^T \|\phi \|_{X}^r \, \dt\right)^{1/r} < + \infty\,, 
\]
\[\|\phi\|_{L^{\infty}(0,T;X)}: = \sup_{0 \leq t \leq T} \|\phi\|_{X}  < + \infty\,,\]
where $(X, \| \cdot \|_{X})$ is a Banach space, for example $X=H_0^1(\Omega) \,.$  Also, we define 
\[H^1(0,T;X):= \left \{ \phi \in L^2(0,T;X) \, : \, \partial_t \phi \in L^2(0,T;X) \right \}\,.\]
%where $X^*$ is the dual space of $X\,.$
%H^1_0(Ω) ⊂L2(Ω)⊊ H^{-1}(Ω)
To reduce notation \cite{lporo},
%https://doi.org/10.1016/j.jcp.2019.06.027
instead of $L^2(0,T;H_0^1(\Omega))$, we denote by $V=H_0^1(\Omega)$ the space for the pressure head $p_i(t,\cdot)$ and by $\bfa{V} = V \times V = H_0^1(\Omega) \times H_0^1(\Omega)$ the space for $\bfa{p}(t,\cdot)=(p_1(t,\cdot),p_2(t,\cdot))$, where $i=1,2$ and $t \in [0,T], T > 0\,.$  

%%%%%%%
\section{System of coupled Richards equations and its homogenization}\label{sysr}

As in Section \ref{intro}, let $\ep$ be the characteristic length representing the periodically small scale variability of the media.   In the current section, the system of Richards equations is based on \cite{Spiridonov2019,homo1,nlnlmc31}.  We first give an overview of the system then focus on the $1/ \e$-scale case in Subsection \ref{1e}.  

%Let $\Omega$ be a bounded domain in $\mathbb{R}^d$ ($d=2$ in our paper, but the techniques here can be extended to $d >2$). 
%and let $Y$ be a unit cube in $\mathbb{R}^d$.
% Let $\gamma$ be a domain for fracture network (or a channel?) in $\mathbb{R}^{d-1}$ (or $\mathbb{R}^d$ if channel..). $Y_2$ be a unit cube in $\mathbb{R}^{d-1}$ (or $\mathbb{R}^d$).
%
% \jrp{Y-periodicity (\todo{done in Section \ref{1e}}), definition of $\ep$ functions (\todo{done})}.
%
%Let $Q(x,y)$, ${\mathcal C}_{ii}(x,y)$ and $k_i(x,y)$ ($i=1,2$) be continuous functions on $\Omega\times Y$ which are $Y$-periodic with respect to $\bfa{y}$ and $q$ be a function in $L^2(\Omega)$
The following system of Richards equations is considered (by combining the mass conservation law with Darcy's law, respectively):
\beq\label{L12}
\bsp
&\frac{\partial \Theta^{\e}_1(p_1^\ep(t,\bfa{x}))}{\partial t} + \div (\bfa{v}_1^\ep)+L_{12}^\ep = f_1(t,\bfa{x}) \quad \text{in } (0,t) \times \Omega\,,\\
&\frac{\partial \Theta^{\e}_2(p_2^\ep(t,\bfa{x}))}{\partial t} + \div (\bfa{v}_2^\ep)- L_{21}^\ep = f_2(t,\bfa{x})\quad \text{in } (0,t) \times \Omega\,,\\
& \bfa{v}_1^\ep = -k_1^\ep(\bfa{x},p_1^\ep)\nabla p_1^\ep(t,\bfa{x}) \hspace{75pt} \quad \text{in } (0,t) \times \Omega\,,\\
& \bfa{v}_2^\ep = -k_2^\ep(\bfa{x},p_2^\ep)\nabla p_2^\ep(t,\bfa{x}) \hspace{75pt} \quad \text{in } (0,t) \times \Omega\,,
\end{split}
\eeq
where each $p_i^\ep=p_i^\ep(t,\bfa{x})$ [$m$] denotes the pressure head, for $i=1,2$ (each $i$ corresponds to a continuum); $\bfa{v}_i^\ep$ [$m s^{-1}$] represents the Darcy velocity;
% v (m3 of fluid/s) / m2) = (m/s)
%([volume/time/area]
%The (fluid) pressure heads are denoted by $p_i^{\ep}(t, \bfa{x})$.  
$\Theta^{\e}_i(p_i^\ep(t,\bfa{x}))$ is volumetric soil water content; $f_i(t, \bfa{x})[s^{-1}]$ refers to source or sink term; ${\bfa{K}}_i^{\ep}(\bfa{x},p_i^{\ep})[ms^{-1}]$ stands for unsaturated hydraulic conductivity tensor \cite{nlnlmc31, Spiridonov2019}, which is bounded, 
%k function if medium homogeneous and isotropic \cite{kisoho} p208, Eric, Denis
symmetric and positive definite.  
%generally, in homogenization, k_i is not necessarily symmetric
In our paper, the media are simply assumed to be isotropic 
%\todo{(because the proof of Lemma \ref{lemma2} needs the isotropic assumption)}, 
so that each hydraulic conductivity $\bfa{K}_i^{\ep}(\bfa{x},p_i^{\ep})$ becomes function $k^{\e}_i(\bfa{x},p_i^{\ep})$ multiplying with the identity matrix $\bfa{I}$ \cite{kanhe}. %(\cite{kisoho}).  
The techniques here can be extended to anisotropic media.  We use $L^{\e}_{12}$ and $L^{\e}_{21}$ to denote the transfer terms between fracture-matrix and matrix-fracture, respectively, where  
\[\int_{\Omega} L_{12}^\ep \, \dx - \int_{\Omega} L_{21}^\ep \, \dx = 0\,, \quad \text{and }
%and we assume the linear relation
L_{12}^{\e} = L_{21}^{\e} \approx Q_{12}^{\e}(\bfa{x},p_1^{\e},p_2^{\e})(p_1^{\e} - p_2^{\e})\,,\]
with the mass transfer (exchange) term \cite{Spiridonov2019} 
\begin{equation}
 \label{q12}
 Q_{12}^{\e} = Q_{21}^{\e}= Q_1^{\e} = Q_2^{\e}=Q^{\e}\,,
\end{equation}
that are calculated via the lower unsaturated hydraulic conductivity 
\begin{equation}
 \label{shape}
 Q^{\e}_1 = Q^{\e}_2 = \zeta k^{\e}_2(\bfa{x}, p_2^{\e})\,,
\end{equation}
having $\zeta$ as the shape factor \cite{warren1963behavior,baren}.

 %\todo{having the dimension of reciprocal volume of matrix or reciprocal area of discrete fracture}.
%warren, (9) page 4, first continuum: matrix pore, zeta  
%The parameter a has the dimensions of reciprocal
%area; it is a shape factor which reflects the geometry
%of the matrix elements and it controls the flow between
%the two porous regions
%
%baren, flow from pore p2 to fracture p1: p2 - p1
%
%\jrp{Explanation for coefficients, boundary, initial condition, conditions for coefficients? (\todo{done})}
%

After substituting the Darcy's law (the last two equations of \eqref{L12}) and the mass transfer term $Q^{\e}_{12}$ at \eqref{q12} into the mass conservation equations (the first two equations of \eqref{L12}), we get the following system for the pressure heads: 
\beq\label{Q12}
\bsp
&\frac{\partial \Theta^{\e}_1(p_1^\ep)}{\partial t} - \div (k_1^\ep(\bfa{x},p_1^\ep)\nabla p_1^\ep)+ Q_1^\ep (\bfa{x},p_1^\ep,p_2^\ep)(p_1^\ep-p_2^\ep )= f_1\quad \text{in } (0,t) \times \Omega\,,\\
&\frac{\partial \Theta^{\e}_2(p_2^\ep)}{\partial t} - \div (k_2^\ep(\bfa{x},p_2^\ep)\nabla p_2^\ep) + Q_2^\ep (\bfa{x},p_1^\ep,p_2^\ep)(p_2^\ep-p_1^\ep)= f_2\quad \text{in } (0,t) \times \Omega\,.
\end{split}
\eeq
Note that we use only one domain $\Omega$ here, as in \cite{ericmultiporoelastic19a,mcl,rh1,rh2}.

%where $Q_1^\ep =\eta_1 \sigma_{12}^\ep$, $Q_2^\ep = \eta_2 \sigma_{21}^\ep\,.$ 
%
%\jrp{Define $\Theta, k, Q...$? (\todo{done})}\\

%\jrp{Separated domain $\rightarrow$ separated macrogrid for hierarchical fem?} (\todo{Where is it? .})

In Section \ref{numer}, the unsaturated hydraulic conductivity is written as a product of permeability (depending on the spatial variable) and some function (depending on the pressure head variable) \cite{Spiridonov2019,nlnlmc31}.
%, that is, the heterogeneity and the nonlinearity are separable, respectively (\cite{spiridonov2019}).  
Note that hydraulic conductivity \cite{rkk} is the fluid's ability to pass through material, while permeability \cite{rkk} is the material's ability allowing fluid to pass through.  Permeability is a feature of the medium itself, whereas hydraulic conductivity is the characteristic of both medium and fluid \cite{rkk}.  Also note that generally, intrinsic permeability or simply permeability $k$ (temperature independent)
%k independent of fluid viscosity and density
does not coincide with saturated hydraulic conductivity $K_s$ (temperature dependent).

The source or sink term $f_i(t, \bfa{x})$ can be
%depend on volumetric water content.
%$f_i(t, \bfa{x})[s^{-1}]\,. 
%wells and surface fluxes, e.g., , 
potential recharge flux at the ground surface, 
%may models include the water flux from hydraulic redistribution as a source or sink term in the Richards equation \cite Climate Change and Terrestrial Ecosystem Modeling
evaporation, (plant) transpiration, precipitation rate \cite{egsource}, leakage through a confining layer, rainfall, pumpage \cite{sinkpos}, surface infiltration, evapotranspiration, and runoff \cite{egsource}, etc.  Positive values of $f_i$ represent a source function, while negative values of $f_i$ imply a sink function \cite{sinkpos}.  
%sometimes, source term (generally including sink term there, including water extraction) combines all of the inputs and losses (sink) of water from the node/soil, eg, source in infiltration problem is gravitational flux, while sink is water extraction by plant roots.
Some physical meanings of these $f_i$ terms \cite{meansource} are as follows.  The unsaturated flow (Richards equation) needs to be equipped with a term representing water uptake by plant roots whenever soil-water-plant system is considered. This term (as a function) is referred to as the sink term because
%soil moisture sink
water in the soil is withdrawn by the root system.  
%That by-passing of the soil may reach such proportion that the soil physical properties become sometimes negligible.
%subsurface drain
When water is supplemented to the soil, by an aquifer for instance, then the equipped term (as a function) is referred to as the source term. 
%This term is particularly important in the salination process due to evaporation at the soil surface.

%Combining both source and sink terms: Because soils comprise the upper portion of the unsaturated region, they
%are exposed to variations in water and chemical matter by
%infiltration and leaching, water uptake by plant roots, and
%evaporation from the soil plane \cite{richardshard}.

The form of $f_i$ can depend on the pressure heads $p_i^{\e}$ \cite{egsource}.  
%(18)
%These terms can be assumed to be zero, or functions of $p_1, p_2\,.$  
For example, $f_1 = k_1 p_1^{\e} - k_2 p_2^{\e}$, $f_2 = -k_1 p_1^{\e} + k_2 p_2^{\e}$, such that $f_1 + f_2 = 0\,.$  Both $f_i$ can be simultaneously positive as in \cite{sources2pos}, %(for coupling equations generally, not specific Richards equations)?  YES, 
also see \cite{egsource} for coupled surface and subsurface flows.  Each $f_i$ can be either positive or negative in a region.  For example, in the \textit{vadose} zone, we consider a source term
%source no word sink, thus source both neg pos
possessing both positive and negative values given by $f = f(\bfa{x}, z) = 0.006 \tu{cos}(4/3\pi z) \tu{sin}(2\pi \bfa{x})$ on $\Omega_{\tu{vad}}$, whereas it holds that
$f \equiv 0$ in the \textit{saturated} zone $\Omega_{\tu{gw}}$ \cite{LPicard}.
%vadose zone: underground water above the water table

%which also depend on the water content θ. 
%The permeability k and pressure head p are dependent on the volumetric water content θ.
%thus, f_1(t,x) = f_1(p_i(t,x)) = -k1 p1 + k2 p2, f_2(t,x) = f_2(p_i(t,x)) = k1 p1 - k2 p2, for instance
%positive https://www.sciencedirect.com/topics/agricultural-and-biological-sciences/richards-equation
%cm/hr hour https://www.gsshawiki.com/Infiltration:Richards%E2%80%99_Equation
%or mL/min S(x,t)
%can be assumed zero.
%%%%%%

%Physical meanings of coupling two Richards equations:  coupling two Richards equation exactly on the same Geometry represents two different subsurface flows with different velocities, and define functions as source or sink term depend on the difference of pressure head to calculate the exchange of water.  These different flows also arise from the dual-continuum background.  Comsol?

%Ref: https://www.comsol.com/forum/thread/32631/about-coupling-richards-equation

%%%%%%

Some explicit examples of source or sink term are from \cite{source01, source06} as follows.  The first form of sink term is from \cite{source01}:
\begin{equation}\label{s01}
 f_1(p_1^{\e}) = \alpha(p_1^{\e})S_{\tu{max}}\,,
\end{equation}
where $\alpha(p_1^{\e})$ is a dimensionless soil water availability factor of pressure head $p_1^{\e}$, and $S_{\tu{max}}$ is the maximum possible root-water extraction when soil water is not restricting (that is, the potential transpiration).  The second example of sink term is from \cite{source06}, as follows:  %combining \cite{source01} and \cite{lsink1}:
\begin{equation}\label{s06}
 f_1(\bfa{x}, p_1^{\e}) = \alpha_1(\bfa{x},p_1^{\e}) \alpha_2(\bfa{x},p_1^{\e}) g(\bfa{x}) S\,,
\end{equation}
%empirical coefficient (before S)
where $S$ is the potential transpiration rate, $g(\bfa{x})$ is the root density function, $\alpha_1(\bfa{x}, p_1^{\e})$ represents the compensation mechanism, $\alpha_2(\bfa{x}, p_1^{\e})$ stands for water stress (for interpretation of each component, see \cite{source06}, for instance).
%%%%%%

%Softwares: Richards equation is solved implicitly by TOUGH2. As a result, CLMT2 can be used to simulate 1D, 2D, or 3D moisture flow in a heterogeneous subsurface

%%%%%
%source and sink terms

%\cite{egsources} The vertical distribution of soil moisture is generally described by soil-water balance equations such as the
%physically based Richards equation [Richards, 1931]. Surface infiltration, evapotranspiration, and runoff are the
%sources and sinks of the soil water conservation model.

%%%%%%%
\bigskip

%%%%%%%
\subsection{The $1/\ep$-scale}\label{1e}
This Subsection is based on \cite{rh2}.  Recall from Section \ref{fus} that $\Omega$ is the bounded computational domain in $\mathbb{R}^2$ and Latin indices $i,j$ vary in the set $\{1,2\}$.
%Recall that $\epsilon > 0$ is the small quantity representing the periodically microscopic scale that the coefficients depend on.  
We let $Y$ be a unit cube in $\mathbb{R}^2$ and $[a,b]$ be an interval in $\mathbb{R}\,.$  
%and let $f_1 (t,\cdot), \ f_2(t,\cdot)$ be functions in $L^2(\Omega)$.  
With $p_i^{\e} \in V = H_0^1(\Omega)$ having values in $[a,b]$, we define the following two-scale coefficients:
\begin{equation}\label{wx}
k_i^\ep(\bfa{x},p_i^{\e}) = k_i\left(\bfa{x},\frac{\bfa{x}}{\e}, p_i^{\e}\right), \, Q_i^\ep(\bfa{x},p_1^{\e},p_2^{\e}) = Q_i\left(\bfa{x},\frac{\bfa{x}}{\e},p_1^{\e},p_2^{\e}\right)\,.
\end{equation}
%for Sections \ref{sysr}, \ref{gms} and \ref{numer} as well as for Appendices \ref{sec:appendix_homogenization}, \ref{hier} and \ref{proofthm}.  
%\ref{homoeq} and 
Note that these two-scale coefficients can be written in the following form (without $\bfa{x}$):
\begin{equation}\label{nox}
k_i^\ep(\bfa{x},p_i^{\e}) = k_i\left(\frac{\bfa{x}}{\e}, p_i^{\e}\right), \, Q_i^\ep(\bfa{x},p_1^{\e},p_2^{\e}) = Q_i\left(\frac{\bfa{x}}{\e},p_1^{\e},p_2^{\e}\right)\,,
\end{equation}
%for Sections \ref{pre} %\ref{numer} 
%and for Appendix \ref{cp} because 
whenever our discussion does not involve $\bfa{x}$ alone.  Here, we assume that  $Q_i(\bfa{y},p_1^{\e},p_2^{\e}) \in C^0(Y \times [a,b] \times [a,b])$ and $k_i(\bfa{y},p_i^{\e}) \in C^0(Y \times [a,b])$ are continuous functions, which are $Y$-periodic with respect to $\bfa{y} = \dfrac{\bfa{x}}{\e}\,.$  Also, each $f_i(t, \cdot)$ is assumed to be in $L^2(\Omega)\,.$ 

Now, we consider the following problem:
\beq
\label{eq:original0}
\bsp
\frac{\partial p_1^\ep(t,\bfa{x})}{\partial t} - \div [k_1^\ep(\bfa{x},p_1^\ep(t,\bfa{x}))\nabla p_1^\ep(t,\bfa{x})]+ \frac{1}{\ep}Q_1^\ep(\bfa{x},p_1^\ep(t,\bfa{x})&, p_2^\ep(t,\bfa{x}))(p_1^\ep(t,\bfa{x})-p_2 ^\ep(t,\bfa{x}))\\
& = f_1(t,\bfa{x}) \  \textrm{in} \ (0,T) \times \Omega\,,\\
\frac{\partial p_2^\ep(t,\bfa{x})}{\partial t} - \div [k_2^\ep(\bfa{x},p_2^\ep(t,\bfa{x}))\nabla p_2^\ep(t,\bfa{x})] +\frac{1}{\ep} Q_2^\ep(\bfa{x},p_1^\ep(t,\bfa{x})&, p_2^\ep(t,\bfa{x}))(p_2^\ep(t,\bfa{x})-p_1^\ep(t,\bfa{x}))\\
& = f_2(t, \bfa{x}) \ \textrm{in} \ (0,T) \times \Omega\,.
\end{split}
\eeq
For $i,j=1,2\,,$ this system \eqref{eq:original0} is equivalent to the following equations:
\beq
\label{eq:original}
\bsp
\frac{\partial p_i^\ep(t,\bfa{x})}{\partial t} - \div (k_i^\ep(\bfa{x},p_i^\ep)\nabla p_i^\ep)+ \frac{1}{\ep} \sum_j Q_i^\ep(\bfa{x},p_1^\ep, p_2^\ep)(p_i^\ep-p_j ^\ep) = f_i(t,\bfa{x}) \  \textrm{in} \ (0,T) \times \Omega\,.
\end{split}
\end{equation}  
The system of equations \eqref{eq:original} is equipped with the initial condition $p_1^\epsilon(0,\bfa{x})= 0$, $p_2^\epsilon(0,\bfa{x})= 0$ in $\Omega\,,$ and with the Dirichlet boundary condition $p_1^\epsilon(t,\bfa{x})=p_2^\epsilon(t,\bfa{x})=0$ on $(0,T) \times \partial \Omega\,.$ 
 %where $g_1$ and $g_2$ are in $L^2(\Omega)$.
 
Note that in \eqref{Q12}, the volumetric water content $\Theta^{\e}_i(p^{\e}_i)$ is usually a nonlinear function of the pressure head $p^{\e}_i\,,$ and  the following form is also valid \cite{richarde1}:
\[\frac{\partial \Theta^{\e}_i(p_i^\ep)}{\partial t} = C(p^{\e}_i) \frac{\partial p^{\e}_i}{\partial t}\,.\]
In our analysis, the homogenization much involves the nonlinear hydraulic conductivity $k_i^{\e}(\bfa{x}, p_i^{\e})\,.$  Whereas, the term $C(p^{\e}_i)$ is not important and can be ignored so that $\Theta^{\e}_i$ is the identity function, that is, $\Theta^{\e}_i(p_i^\ep) = p_i^\ep$ in \eqref{eq:original0}.
 
\bigskip 
 
 Toward homogenization of the original system \eqref{eq:original}, we postulate that the system's solutions (that is, the pressure heads $p_1^\ep$ and $p_2^\ep$) admit the following two-scale asymptotic expansion (see \cite{2scale_nl_expand}, 
 %p5 (3.2)
 for instance):
\beq\label{eq:asymp1}
\bsp
p_1^\ep(t,\bfa{x}) = p_{10}\left(t,\bfa{x},\xoe\right) + \ep p_{11}\left(t,\bfa{x},\xoe\right)+\ep^2 p_{12}\left(t,\bfa{x},\xoe\right)+ \cdots\,,\\
p_2^\ep(t,\bfa{x}) = p_{20}\left(t,\bfa{x},\xoe\right) + \ep p_{21}\left(t,\bfa{x},\xoe\right)+\ep^2 p_{22}\left(t,\bfa{x},\xoe\right)+ \cdots\,,
\end{split}
\eeq
in $\Omega\,.$
%Using (\ref{eq:asymp})
For $i=1,2$, we define
%, l=0,1,2, \cdots$, we define 
%\begin{equation}\label{pi}
\[p_i^{\e} = p_i^\e (t,\bfa{x})
:=\hat{p}_i\left(t,\bfa{x}, \dfrac{\bfa{x}}{\e}\right) = \hat{p}_i\left(t,\bfa{x}, \bfa{y}\right)
\,, \quad  
%p_{il}\left(t,\bfa{x}, \dfrac{\bfa{x}}{\e}\right) = p_{il}\left(t,\bfa{x}, \bfa{y}\right)\,, \quad 
\text{with } \bfa{y} = \frac{\bfa{x}}{\e}\,.\] 
%\end{equation}
In order to process further, we denote 
\begin{equation}\label{u1p10}
p_1 = p_1(t,\bfa{x}) := p_{10}(t,\bfa{x}) = p_{10}(t,\bfa{x}, \bfa{y})\,, \quad p_2=p_2(t,\bfa{x}) := p_{20}(t,\bfa{x}) = p_{20}(t,\bfa{x}, \bfa{y})\,,
\end{equation}
where the fact that $p_{10}(t,\bfa{x}, \bfa{y})$ and $p_{20}(t,\bfa{x}, \bfa{y})$ are independent of $\bfa{y}$ can be easily verified as in \cite{rh2}.
%will be proved later in Appendix \ref{homoeq} by \eqref{1ep2}. 
 
\bigskip 
 
Assume that the mass transfer term, the hydraulic conductivity and its spatial gradient are uniformly bounded, that is, there exist positive constants $\underline{k}, \overline{k}$ and $\underline{m},\overline{m}$ such that the following inequalities hold: 
 \begin{align}
 \label{Coercivity}
 \begin{split}
\underline{k} \leq k_1(\bfa{y}, p_1), \ k_2(\bfa{y}, p_2), \ | \nabla_y k_1(\bfa{y},p_1)|, \ |\nabla_y k_2(\bfa{y},p_2)| \leq \overline{k}\,,\\
\underline{m}  \leq Q_1(\bfa{y},p_1,p_2), \ Q_2(\bfa{y},p_1,p_2) \leq \overline{m}\,.
\end{split}
\end{align}
Resulting from the original system \eqref{eq:original}, our homogenized system is \eqref{eq:homogenized}.  The details of the homogenization's derivation and the hierarchical algorithm for the cell problems can be found in Appendices \ref{sec:appendix_homogenization} and \ref{hier} (with a proof \ref{proofthm}), respectively.

\bigskip

Now, we consider $p_1 = p_1(t,\bfa{x})= p_{10}(t,\bfa{x}), \ p_2 = p_2(t,\bfa{x}) = p_{20}(t,\bfa{x})$ in $V= H^1_0(\Omega)$ as from \eqref{u1p10}, the homogenized conductivities $\kappa_1(p_1), \kappa_2(p_2)$ abbreviated for $\kappa_1(\bfa{x},p_1), \kappa_2(\bfa{x},p_2)$
%= k_m^*(p_i)$ 
in \eqref{star}, and $f_1 (t,\cdot), \ f_2(t,\cdot)$ in $L^2(\Omega)$ as from \eqref{eq:original}.  

Thanks to \cite{thomo}, one needs to solve the reduced form of the system \eqref{eq:homogenized} for $p_1(t,\cdot),p_2(t,\cdot) \in V\,,$ that is, in the domain $(0,T) \times \Omega$ as follows:
\begin{align}\label{r0}
\begin{split}
 \frac{\partial p_1}{\partial t} - \ddiv(\kappa_1(p_1) \, \nabla p_1) + \bfa{b}_{11}(p_1,p_2) \cdot \nabla p_1 +\bfa{b}_{12}(p_1,p_2) \cdot \nabla p_2 
 + c_1(p_1,p_2) \, (p_1 - p_2) = f_1\,,\\
 \frac{\partial p_2}{\partial t} - \ddiv(\kappa_2(p_2) \, \nabla p_2) + \bfa{b}_{21}(p_1,p_2) \cdot \nabla p_1 + \bfa{b}_{22}(p_1,p_2) \cdot \nabla p_2 + c_2(p_1,p_2) \, (p_2 - p_1) = f_2\,,
 \end{split}
\end{align}
equivalently (for $i,j=1,2$),
\begin{align}\label{r1}
\begin{split}
 \frac{\partial p_i}{\partial t} - \ddiv(\kappa_i(p_i) \, \nabla p_i) + \sum_j [\bfa{b}_{ij}(p_1,p_2) \cdot \nabla p_j ]
 + \sum_j [c_i(p_1,p_2) \, (p_i - p_j)] = f_i\,,
 %
 %\frac{\partial p_2}{\partial t} - \ddiv(\kappa_2(p_2) \, \nabla p_2) = \bfa{b}_3(p_1,p_2) \cdot \nabla p_2 + \bfa{b}_4(p_1,p_2) \cdot \nabla p_1 + c_2(p_1,p_2) \, (p_2 - p_1) + f_2\,,
 %b_3 = b_{22}, b_4 = b_{21}
 \end{split}
\end{align}
with the initial condition $p_i(0,\bfa{x})= 0$ in $\Omega\,,$ and with the Dirichlet boundary condition $p_i(t,\bfa{x})=0$ on $(0,T) \times \partial \Omega\,.$
%where $h_1$ and $h_2$ are in $L^2(\Omega)$.
Note that in the system \eqref{r0} and \eqref{r1}, each $c_i(p_1,p_2)$ (abbreviated for $c_i(\bfa{x},p_1,p_2)$) is mass transfer term and each $\bfa{b}_{ij}(p_1,p_2)$ (abbreviated for $\bfa{b}_{ij}(\bfa{x},p_1,p_2)$) is  nonlinear velocity (see \cite{thomo}, for instance), which are explicitly defined in \eqref{eq:homogenized} in terms of correctors
%to correct the homogenization
being solutions of
the so-called cell problems \eqref{eq:nj}--\eqref{eq:mj}.

%%%%%Hierarchical%%%%%%%

%\bigskip

%\todo{Regarding the homogenization process from Appendix \ref{sec:appendix_homogenization}, for numerical solutions of its cell problems, we develop a \textbf{hierarchical} algorithm in Appendix \ref{hier}.}

\vspace{12pt}

%%%%%%%%%%%%%
\section{Fine-scale discretization and Picard iteration for linearization}
\label{pre}

This section is based on \cite{rpicardc, Spiridonov2019, cemnlporo}.  Note that there are a variety of recently developed linearization techniques for Richards equation, such as the Newton method, the modified Picard algorithm \cite{richarde1}, the L-scheme \cite{LPicard}, and a new multilevel Picard
iteration \cite{mlp}.  However, we will use the classical Picard linearization algorithm as in \cite{classicPicard}.  

We are given an initial pair $\bfa{p}_0 = (p_{1,0},p_{2,0}) \in \bfa{V}\,.$  With $i,j=1,2\,,$ on each continuum $i\,,$ providing a fixed $u_i \in V\,,$ and letting $\bfa{u} = (u_1,u_2) \in \bfa{V}\,,$ we define the following bilinear forms:  for $p, \phi \in V$ and $\bfa{p}=(p_1,p_2) \in \bfa{V}\,,$
\begin{align}
 a_i(p,\phi;u_i)&=\int_{\Omega} \kappa_i(u_i)\nabla p \cdot \nabla \phi \, \dx\,, \label{ai}\\
 b_i(\bfa{p},\phi;\bfa{u})&=\sum_j \int_{\Omega} (\bfa{b}_{ij}(u_1,u_2) \cdot \nabla p_j) \phi \, \dx\,, \label{bi}\\
 q_i(\bfa{p},\phi;\bfa{u})&= \sum_j \int_{\Omega} c_i(u_1,u_2)(p_i-p_j) \phi \, \dx\,. \label{qi}
\end{align}
Recall that $(\cdot,\cdot)$ denotes the standard $L^2(\Omega)$ inner product.  The variational form of \eqref{r1} is as follows: for $i=1,2$, find $\bfa{p}=(p_1,p_2) \in \bfa{V}$ such that
\begin{align}\label{r1e}
 \left(\frac{\partial{p_i}}{\partial t} , \phi_i \right) + a_i(p_i,\phi_i;p_i) + b_i(\bfa{p},\phi_i;\bfa{p}) + q_i(\bfa{p},\phi_i;\bfa{p}) = (f_i,\phi_i)\,,
\end{align}
with all $\bfa{\phi} = (\phi_1,\phi_2) \in \bfa{V}\,,$ for a.e.\ $t \in (0,T)\,.$  
For simplicity, we can drop the subscript $i$ while
%(for example, drop $i$ in $\phi_i$), 
keeping the meaning that each equation \eqref{r1e} corresponds to one of the dual continua.
%and $p$ stands for each $p_i$ while $\phi$ represent each $\phi_i\,,$ respectively.
%%%
\begin{comment}
The variational form of \eqref{r1} is as follows: Find for $\bfa{p} = (p_1,p_2) \in \bfa{V}$ such that
\begin{align*}
\begin{split}
 &\int_{\Omega} \frac{\partial p_1}{\partial t} \phi_1 \, \dx + \int_{\Omega} \kappa_1(p_1) \, \nabla p_1 \cdot \nabla \phi_1 \, \dx \\
 &= \int_{\Omega} (\bfa{b}_1(p_1,p_2) \cdot \nabla p_1) \, \phi_1 \, \dx + \int_{\Omega} (\bfa{b}_2(p_1,p_2) \cdot \nabla p_2) \, \phi_1 \, \dx \\
 & \quad + \int_{\Omega} c_1(p_1,p_2) \, (p_1 - p_2)\, \phi_1 \, \dx + \int_{\Omega} f_1 \, \phi_1 \, \dx \,,
 \end{split}
 \end{align*}
 %
 \begin{align}\label{r1e0}
\begin{split}
 &\int_{\Omega} \frac{\partial p_2}{\partial t} \phi_2 \, \dx + \int_{\Omega} \kappa_2(p_2) \, \nabla p_2 \cdot \nabla \phi_2 \, \dx \\
 &= \int_{\Omega} (\bfa{b}_3(p_1,p_2) \cdot \nabla p_2) \, \phi_2 \, \dx + \int_{\Omega} (\bfa{b}_4(p_1,p_2) \cdot \nabla p_1) \, \phi_2 \, \dx \\
 & \quad + \int_{\Omega} c_2(p_1,p_2) \, (p_2 - p_1)\, \phi_2 \, \dx + \int_{\Omega} f_2 \, \phi_2 \, \dx \,,
 \end{split}
\end{align}
for all $\bfa{\phi}=(\phi_1, \phi_2) \in \bfa{V}\,,$ for a.e.\ $t \in (0,T)\,.$  
\end{comment}
%%%%%

To reach the first goal regarding time discretization (see \cite{rpicardc, Spiridonov2019}, for instance) of \eqref{r1e}, we will apply the following standard backward Euler finite-difference scheme: find $\bfa{p}=(p_1,p_2) \in \bfa{V}$ such that 
for any $\bfa{\phi}=(\phi_1,\phi_2) \in \bfa{V}\,,$
\begin{align}\label{r1ed}
 \left(\frac{p_{i,s+1} - p_{i,s}}{\tau} , \phi_i \right) + a_i(p_{i,s+1},\phi_i;p_{i,s+1}) + b_i(\bfa{p}_{s+1},\phi_i;\bfa{p}_{s+1}) &+ q_i(\bfa{p}_{s+1},\phi_i;\bfa{p}_{s+1})\\
 &= (f_{i,s+1},\phi_i)\,, \nonumber
\end{align}
%with $p_s = p(\bfa{x},t_s), f_s= f(\bfa{x},t_s)\,,$
where the time range $[0,T]$ is divided 
into $S$ equal intervals, with the time step size $\tau = T/S > 0$, 
and 
the subscript $s$ denotes the evaluation of a function at 
the time instant $t_s= s\tau$ (for $s=0,1,\cdots,S$).

%%%%%%%
Next, the nonlinearity in space will be handled through linearization using Picard iteration (see \cite{rpicardc, Spiridonov2019,cemnlporo}, for instance) as follows.  At the $(s+1)$th time step, we guess $\bfa{p}^0_{s+1} \in \bfa{V}\,.$  For $n=0,1,2, \cdots,$ given $\bfa{p}^n_{s+1} \in \bfa{V} \,$, we find $\bfa{p}^{n+1}_{s+1} \in \bfa{V}$ such that for any $\bfa{\phi}=(\phi_1,\phi_2) \in \bfa{V}\,,$ 
\begin{align}\label{r1el}
\left(\frac{p^{n+1}_{i,s+1} - p_{i,s}}{\tau} , \phi_i \right) + a_i(p^{n+1}_{i,s+1},\phi_i;p^n_{i,s+1}) + b_i(\bfa{p}^{n+1}_{s+1},\phi_i;\bfa{p}^n_{s+1}) &+ q_i(\bfa{p}^{n+1}_{s+1},\phi_i;\bfa{p}^n_{s+1})\\
&= (f_{i,s+1},\phi_i)\,. \nonumber
\end{align}
%https://www.maths.lancs.ac.uk/~jameson/picard.pdf THEOREM 2 for multivars
%max norm or L2 norm ok as all norm equivalent
%Lipschitz condition in cp, for condition b in Thm2 of http://home.iitk.ac.in/~sghorai/TEACHING/MTH203/ode5.pdf
%https://www.math.wisc.edu/~angenent/519.2016s/notes/picard.html
%
%Existence thm is simple but to prove, we need to construct a convergent sequence using Picard iteration (https://ptolemy.berkeley.edu/projects/embedded/eecsx44/lectures/Spring2013/Picard.pdf)
%
%simply ode: http://home.iitk.ac.in/~sghorai/TEACHING/MTH203/ode5.pdf
%rhs of {r1} is G as f in https://www.maths.lancs.ac.uk/~jameson/picard.pdf, satisfying Lipschitz condition or G is contraction map.
%Banach fixed point thm https://en.wikipedia.org/wiki/Banach_fixed-point_theorem#:~:text=In%20mathematics%2C%20the%20Banach%E2%80%93Caccioppoli,spaces%2C%20and%20provides%20a%20constructive
\begin{comment}
One can verify (from Appendix \ref{cp}, also see \cite{rh2}) that system \eqref{r1el} has a unique solution.  We denote by $G_{s+1}$ the Picard iteration map $\bfa{p}^n_{s+1}$  ($\in \bfa{V}$) $\mapsto \bfa{p}^{n+1}_{s+1}$ ($\in \bfa{V}$) corresponding to \eqref{r1el}.  
\end{comment}
The Picard iteration process converges to a limit as $n \to \infty$ (see a theoretical proof in Appendix \ref{cp}).   
%Comment: Picard iteration has more theoretical value than practical value. It is
%used in the proof of existence and uniqueness theorem above. On the other hand, finding
%approximate solution using this method is almost impractical for complicated function
%f(x, y) http://home.iitk.ac.in/~sghorai/TEACHING/MTH203/ode5.pdf.
%
%Also, Picard’s Existence and Uniqueness Theorem https://ptolemy.berkeley.edu/projects/embedded/eecsx44/lectures/Spring2013/Picard.pdf
In practice, we terminate this process at an $\alpha$th iteration when it meets a certain stopping criterion, 
and let 
\begin{equation}\label{pdata}
 \bfa{p}_{s+1} = \bfa{p}_{s+1}^{\alpha}
\end{equation}
be the previous time data to proceed to the next temporal step in \eqref{r1ed}.  Throughout this work, we propose a stopping criterion using the relative successive difference, that is, given a user-defined tolerance $\delta_0 > 0$, if 
\begin{equation}\label{pt}
\dfrac{\|p_{i,s+1}^{n+1} - p_{i,s+1}^{n} \|_{L^2(\Omega)}}{\| p_{i,s+1}^{n} \|_{L^2(\Omega)}} \leq \delta_0\,,
\end{equation}
for both $i=1,2\,,$ then we terminate the iteration procedure.

%%%%%
Now, we discuss the fine-grid notation.  Toward discretizing the variational problem \eqref{r1e}, we first let $\mathcal{T}_h$ be a fine grid with grid size $h$.  Here, $h$ is assumed to be significantly small so that the fine-scale solution 
%$\bfa{p}_h = (p_{1,h}, p_{2,h})$ (which will be specified later) 
is close enough to the exact solution.  Second, we let $V_h$ be the conforming piecewise bilinear finite element basis space with reference to the rectangular fine grid $\mathcal{T}_h$, that is,
\begin{equation}\label{Vh}
V_h:= \{ u \in V: u|_K \in \mathcal{Q}_1(K) \; \forall K \in \mathcal{T}_h\}\,,
\end{equation}
where $\mathcal{Q}_1(K)$ is the space of all bilinear (or multilinear if $d>2$) 
elements on $K$.  We let $\bfa{V}_h = V_h \times V_h\,.$

In $\mathcal{T}_h$, the fully Picard discrete scheme reads: starting with an initial $\bfa{p}_{h,0} \in \bfa{V}_h$, 
at the $(s+1)$th time step, we guess $\bfa{p}^0_{h,s+1}\in \bfa{V}_h \,,$ 
% = p_{s,h} 
and iterate in $\bfa{V}_h$ from \eqref{r1el}:
\begin{align}\label{r1elh}
\begin{split}
\left(\frac{p^{n+1}_{i,h,s+1} - p_{i,h,s}}{\tau} , \phi_i \right) + a_i(p^{n+1}_{i,h,s+1},\phi_i;p^n_{i,h,s+1}) + b_i(\bfa{p}^{n+1}_{h,s+1},\phi_i;\bfa{p}^n_{h,s+1}) &+ q_i(\bfa{p}^{n+1}_{h,s+1},\phi_i;\bfa{p}^n_{h,s+1}) \\
&= (f_{i,s+1},\phi_i)\,,
\end{split}
\end{align}
with any $\bfa{\phi}= (\phi_1,\phi_2) \in \bfa{V}_h$, for $n=0,1,2, \cdots\,,$ until meeting \eqref{pt} at some $\alpha$th Picard step.  %Here, $G_{h,s+1}$ is the bijective restriction of the Picard iteration map $G_{s+1}$ onto $\bfa{V}_h$.  
We use \eqref{pdata} for setting the previous time data $\bfa{p}_{h,s+1} = \bfa{p}_{h,s+1}^{\alpha}$ to go ahead to the next time step in \eqref{r1ed}.

\section{GMsFEM for coupled dual-continuum nonlinear equations}\label{gms}
%k or \bfa{K} affects both coupled and uncoupled

%This section is based on Section 4 of \cite{mcl} (for uncoupled and coupled GMsFEM), Section 4 and 5.3 of \cite{gne}, Section 4 of \cite{Spiridonov2019} for coupled multiscale basis functions, and see further Section 6.5.2 of \cite{cemnlporo} for Picard algorithm.

\subsection{Overview}\label{gmsover}

The purpose of this section is to build multiscale spaces (in the pressure head computation) for the coupled nonlinear system \eqref{r1e}.  Toward establishing an appropriate generalized multiscale finite element method (GMsFEM, \cite{G1}), from the linearized system \eqref{r1el} in Section \ref{pre}, the nonlinearity may be treated as constant at each Picard iteration step (after temporal discretization) so that multiscale spaces are able to be built respecting this nonlinearity.      

First, we discuss the coarse-grid notation. Let $\mathcal{T}^H$ be a coarse grid that has $\mathcal{T}_h$ as a refinement.  We denote by $H$ the coarse-mesh size (with $h \ll H$).  Each element of $\mathcal{T}^H$ is named a coarse-grid block (or patch or element).  
We call $N$ the total number of coarse blocks (elements) and $N_v$ the total number of interior vertices of 
$\mathcal{T}^H$.  Let $\{\bfa{x}_j\}^{N_v}_{j=1}$ be the collection of vertices (nodes) in $\mathcal{T}^H\,.$    
%in each Picard iteration and for the convergent ones at the end of Picard iteration procedure.
The $j$th coarse neighborhood of the coarse node $\bfa{x}_j$ is defined by the union of all coarse elements $K_m \in \mathcal{T}^H$ possessing such $\bfa{x}_j\,,$ as follows:
\beq
\omega_j = \bigcup \{K_m \in \mathcal{T}^H : \bfa{x}_j \in \overline{K_m}\}\,.
\eeq  

Our primary goal is using the GMsFEM to seek a multiscale solution $\bfa{p}_{\tu{ms}}$ (which is a good approximation of the fine-scale solution 
$\bfa{p}_h$). 
%The reason is that solving (\ref{r1e}) for $\bfa{p}_h$ using standard Galerkin finite element method (FEM) on fine grid $\mathcal{T}_h$ can be extremely expensive due to multiscale feature of the coefficients.  One thus needs some model reduction by solving the given problem on coarse grid $\mathcal{T}^H$ while keeping crucial fine-scale effect on the coarse scale. 
In order to do so, first, we utilize (on coarse grid) the GMsFEM \cite{G1}, where we solve local problems (to be specified then) in each coarse neighborhood, to systematically construct multiscale basis functions (degrees of freedom for the solution) that still contain fine-scale information.  The resulting multiscale space is called  the global offline space ${\bfa V}_{\tu{ms}}\,,$ comprising multiscale basis functions.  Last, we find the multiscale solution ${\bfa p}_{\tu{ms}}$ in ${\bfa V}_{\tu{ms}}\,.$  For the GMsFEM, in this section, we note that the system (\ref{r1}) 
%and \eqref{r1e} 
possesses multiscale high-contrast coefficients $\kappa_i, {\bfa b}_{ij}, c_i$ (depending on $\bfa{x}$) for $i,j = 1,2\,.$
We refer the readers to \cite{G1} for the GMsFEM's details and to \cite{mcl,gne, mcontinua17} for its overview.  In the following subsections, we will present two different methods (mainly based on \cite{mcl}) for constructing uncoupled multiscale basis functions (uncoupled GMsFEM) and coupled multiscale basis functions (coupled GMsFEM).  For each type of these methods, using the GMsFEM's framework \cite{G1} as above, we build a local snapshot space for each coarse neighborhood $\omega_j$ then solve a relevant local spectral problem (defined on the snapshot space), to generate a multiscale (offline) space $\bfa{V}_{\tu{ms}}\,.$  

More specifically, in the next Subsections \ref{uncg} and \ref{cg}, provided $\bfa{p}_{\tu{ms},s}$ (at time step $s$th) and $\bfa{p}^{n}_{\tu{ms},s+1}$ (at time step $(s+1)$th and Picard iteration $n$th), we will build $\bfa{V}_{\tu{ms}}=\bfa{V}^n_{\tu{ms},s+1}\,.$  In practice (Subsection \ref{alg}), given $\bfa{p}_{\tu{ms},0}$ and a starting guess $\bfa{p}^{0}_{\tu{ms},1}$, we will need to establish only one $\bfa{V}_{\tu{ms}} = \bfa{V}^0_{\tu{ms},1}\,.$  Here, the snapshot functions as well as the basis functions are time-independent.
%\cite{Spiridonov2019} for coupled multiscale basis functions.

%Several options are available for snapshot spaces (see \cite{G1, gne}, for instance).  
%In our paper, each case's snapshot space is a collection of \textbf{harmonic basis functions} (to be discussed later), consisting of solutions for the associating harmonic extension problem.

%%%
\subsection{Uncoupled GMsFEM}\label{uncg}
This terminology means that multiscale basis functions in $V^i_{\tu{ms}}$ 
%(at time step $(s+1)$ and Picard iteration $n$th) 
are constructed for the solutions $p^{n+1}_{i,\tu{ms},s+1}$, separately, with $i,j=1,2\,.$  

We let the fine-scale approximation (FEM) space for the $i$th continuum be $V^i_h(\omega_j)=V_h(\omega_j)$, which is the conforming space $V^i_h =V_h$ restricted to the coarse neighborhood $\omega_j\,.$  The notation $J_h(\omega_j)$ stands for the set of all nodes of the fine grid $\mathcal{T}_h$ locating on $\partial \omega_j$.  The cardinality of $J_h(\omega_j)$ is abbreviated by $N_{J_j}\,,$ and we let the index $k$ varies $1 \leq k \leq N_{J_j}\,.$  

We will construct multiscale basis functions for each $i$th continuum distinctly by excluding the transfer functions and taking into consideration only the conductivity $\kappa_i\,.$  In particular, for each $i$th continuum, on every coarse neighborhood $\omega_j$, we first solve the following local snapshot problem: find the $k$th snapshot function $\phi_{k,i}^{(j),\tu{snap}} \in V_h(\omega_j)$ satisfying
\beq\label{snapu}
\bsp
%a_i^{(j)}(\phi_{k,i}^{(j),\tu{snap}}, v) = 0, \ \ \ \text{in} \ \omega_j,\\
-\div ( \kappa_i(\bfa{x},p^n_{i,\tu{ms},s+1})\nabla \phi_{k,i}^{(j),\tu{snap}} ) &= 0 \ \ \ \text{in} \ \omega_j,\\
\phi_{k,i}^{(j),\tu{snap}} &= \delta_{k,i} \ \ \ \text{on} \ \partial \omega_j\,,
\end{split}
\eeq
where $\delta_{k,i}$ is a function defined as
%discrete delta
\[\delta_{k,i}(\bfa{x}^j_m) =
 \begin{cases}
  1 \quad m = k\,,\\
  0 \quad m \ne k\,,
 \end{cases}
\]
for all $\bfa{x}^j_m$ in $J_h(\omega_j)\,,$ $1 \leq k \leq N_{J_j}$.  
%The solutions for this problem (\ref{snapu}) are named harmonic basis functions.
%\subsubsection{Analysis}
Hence, for the $i$th continuum, we obtain the $j$th local snapshot space 
%\beq
\[V^i_{\tu{snap}}(\omega_j) = \text{span}\{ \phi_{k,i}^{(j),\tu{snap}} \, \bigr | \,  1 \leq k \leq N_{J_j} \}\,.\]
%\eeq
% is also the number of basis functions of $\bfa{V}^{(i)}_{\tu{snap}}(\omega_j)$?.

Now, let $N_{i}$ be the number of interior vertices of $\mathcal{T}^H$ on the $i$th continuum.  The $j$th local multiscale basis functions are built on $\omega_j$ with respect to the $i$th continuum, by solving the local spectral problems:  find the $k$th eigenfunction $\psi_{k,i}^{(j)} \in V^i_{\tu{snap}}(\omega_j)$ and its corresponding real eigenvalue $\lambda_{k,i}^{(j)}$ such that for all $\xi_i$ in $V^i_{\tu{snap}}(\omega_j)\,,$ 
\beq\label{eeunc}
a_i^{(j)}(\psi_{k,i}^{(j)},\xi_i) = \lambda_{k,i}^{(j)} s_i^{(j)}(\psi_{k,i}^{(j)},\xi_i)\,.
\eeq
For any $\phi, \psi \in V^i_{\tu{snap}}(\omega_j)\,,$ these operators are defined as follows \cite{gne,mcontinua17, mcl}:  
\beq\label{schi}
\bsp
a_i^{(j)}(\phi,\psi) = \int_{\omega_j} \kappa_i(\bfa{x},p^n_{i,\tu{ms},s+1}) \nabla \phi \cdot \nabla \psi\dx,\\
s_i^{(j)}(\phi,\psi) = \int_{\omega_j} \kappa_i(\bfa{x},p^n_{i,\tu{ms},s+1})\left( \sum_{l=1}^{N_{i}} |\nabla \chi_{l,i}|^2\right) \phi\psi  \, \dx\,,
%j can be some l to sum up purpose
\end{split}
\eeq
where each $\chi_{l,i}$ is a standard
%j can be some l to sum up purpose
multiscale finite element basis function in the $i$th continuum, for the coarse node $\bfa{x}_l$ (that is,
%j can be some l to sum up purpose
with linear boundary
conditions for cell problems \cite{pou}).  We note that $\{\chi_{l,i}\}_{l=1}^{N_{i}}$ is a set
%j can be some l to sum up purpose
of partition of unity functions (for $\mathcal{T}^H$) supported in the $i$th continuum.  

%(can be chosen to be linear basis functions  supported in $\omega_j$)

%POU \cite{pou} for uncoupled 1D
%\cite{gle}, \cite{gne} for 2D

%
After arranging the eigenvalues $\lambda_{k,i}^{(j)}$ from (\ref{eeunc}) in ascending order, we take the first $L_{\omega_j}$ eigenfunctions, and they are still denoted as $\psi_{1,i}^{(j)}, \cdots, \psi_{L_{\omega_j},i}^{(j)},.$  Last, we define the $k$th multiscale basis function for the $i$th continuum on $\omega_j$ by
%\begin{equation}\label{mbsu}
 \[\psi_{k,i}^{(j),\tu{ms}} = \chi_{j,i} \psi_{k,i}^{(j)}\,,\]
 %j can be some l to sum up purpose
%\end{equation}
where $1 \leq k\leq L_{\omega_j}\,.$

Within the $i$th continuum, the local auxiliary offline multiscale space is defined by
%\beq
\[V_{\tu{ms}}^{i}(\omega_j) = \text{span}\left \{\psi_{k,i}^{(j),\tu{ms}} \, \bigr | \, 1 \leq k\leq L_{\omega_j} \right \}\,.\]
%\eeq
We then define the global offline space %for the $i$th continuum as
\[V_{\tu{ms}}^i = \sum\limits_{j=1}^{N_{i}} V_{\tu{ms}}^{i}(\omega_j)= \text{span}\left \{\psi_{k,i}^{(j),\tu{ms}} \, \bigr | \, 1 \leq j \leq N_{i} \,, 1\leq k\leq L_{\omega_j} \right \}\,.\]
%span for j
Finally, the multiscale space as the global offline space 
%(at the $n$th Picard iteration with time step $(s+1)$th) 
is defined by 
\[
%\bfa{V}^n_{\tu{ms},s+1} 
\bfa{V}_{\tu{ms}} = V_{\tu{ms}}^{1}\times V_{\tu{ms}}^{2}\,,\]
which will be employed to find solution at the next $(n+1)$th Picard iteration. 
%%%%%%%
\subsection{Coupled GMsFEM}\label{cg}
In this section, we construct coupled multiscale basis functions in $\bfa{V}_{\tu{ms}}$
%(at Picard iteration $n$th and time step $(s+1)$) 
for the solution $\bfa{p}^{n+1}_{\tu{ms},s+1}=(p^{n+1}_{1,\tu{ms},s+1}, p^{n+1}_{2,\tu{ms},s+1})\,.$   

Here, regarding the coupled local snapshot problems, we will take into account the interaction terms $c_1$ and $c_2$ from (\ref{r0}).  For the GMsFEM analysis, the operators of eigenvalue problems should be symmetric.  Thus, we only choose the symmetric part of $c_1$ and $c_2$, namely, $c_s = (c_1+c_2)/2.$  %and ignore $Q_a$ from (\ref{eq:varus}), which is equivalent to (\ref{eq:vartt}).  In order to do so, we will utilize Assumption \ref{bdd} (that is, $|\bfa{b}_s| \gg |\bfa{b}_a|$ and  $|Q_s| \gg |Q_a|$) and Lemma \ref{ceamsaq2} in Section \ref{ca}.

More specifically, for $i,j,r=1,2\,,$ on each coarse neighborhood $\omega_j$, we solve the local snapshot problem: find the snapshot functions $\bfa{\phi}_{k,r}^{(j),\tu{snap}} = \left (\phi_{k,1,r}^{(j),\tu{snap}},\phi_{k,2,r}^{(j),\tu{snap}}\right)$ in $\bfa{V}_h(\omega_j)=V_h(\omega_j)\times V_h(\omega_j)$ that satisfy
\beq\label{snapc}
\bsp
%a^{(j)}(\bfa{\phi}_{k}^{(j),\tu{snap}}, \bfa{v}) + q_s^{(j)}(\bfa{\phi}_{k}^{(j),\tu{snap}}, \bfa{v})= 0, \ \ \ \text{in} \ \omega_j,\\
-\div \left( \kappa_1(\bfa{x},p^n_{1,\tu{ms},s+1}) \nabla \phi_{k,1,r}^{(j),\tu{snap}} \right) + c_s (\bfa{x},\bfa{p}^{n}_{\tu{ms},s+1})\left(\phi_{k,1,r}^{(j),\tu{snap}}-\phi_{k,2,r}^{(j),\tu{snap}}\right)= 0 \ \ \ \text{in} \ \omega_j,\\
-\div \left( \kappa_2(\bfa{x},p^n_{2,\tu{ms},s+1})\nabla \phi_{k,2,r}^{(j),\tu{snap}} \right) + c_s (\bfa{x},\bfa{p}^{n}_{\tu{ms},s+1})\left(\phi_{k,2,r}^{(j),\tu{snap}}-\phi_{k,1,r}^{(j),\tu{snap}}\right)= 0 \ \ \ \text{in} \ \omega_j,\\
\bfa{\phi}_{k,r}^{(j),\tu{snap}} = \bfa{\delta}_{k,r} \ \ \ \text{on} \ \partial \omega_j\,,
\end{split}
\eeq
for $1 \leq k \leq N_{J_j}$, where each $\bfa{\delta}_{k,r}$ is specified as
\beq
\bsp
\bfa{\delta}_{k,r}(\bfa{x}_m) = \delta_k(\bfa{x}_m) \bfa{e}_r, \ \ r = 1,2\,,
%(1,0), (0,1)
\end{split}
\eeq
with all $\bfa{x}_m$ in $J_h(\omega_j)$ and $\{\bfa{e}_r \, | \, r=1,2\}$ as a standard basis in $\mathbb{R}^2\,.$  
%This problem (\ref{snapc}) has solutions as harmonic basis functions.
%Eg on \partial \omega_j: \left (\phi_{k,1,1}^{(j),\tu{snap}},\phi_{k,2,1}^{(j),\tu{snap}}\right) = (1,0)
%\left (\phi_{k,1,2}^{(j),\tu{snap}},\phi_{k,2,2}^{(j),\tu{snap}}\right) = (0,1)
%Explain: if x_k is in continuum 1 (and s=1), then x_k is not in other parallel continuums 2, 3 (still s=1), so δ_{k,s=1} (x_k) = (1,0,0) = e_1, and so on
%More precisely, if x_k is in continuum 2 (and s=2), then x_k is not in any of continuums 1,3 (still s=2), so δ_{k,s=2} (x_k) = (0,1,0) = e_2
%if x_k is not in any continuums 1,2,3, then δ_{k,s} (x_k) = (0,0,0)
The local snapshot space is then of the form
\beq
\bfa{V}_{\tu{snap}}(\omega_j) = \text{span} \left \{ \bfa{\phi}_{k,r}^{(j),\tu{snap}} \, \bigr | \,  1\leq k \leq N_{J_j},\ r=1,2 \right \}\,.
\eeq

Now, we solve the local eigenvalue problems:  find the $k$th eigenfunction $\bfa{\psi}_{k}^{(j)}= \left(\psi_{k,1}^{(j)}, \psi_{k,2}^{(j)}\right) \in \bfa{V}_{\tu{snap}}(\omega_j)$ 
and its corresponding real eigenvalue $\lambda_{k}^{(j)}$ such that for all $\bfa{\xi} \in \bfa{V}_{\tu{snap}}(\omega_j)\,,$
\beq\label{eec}
a_{c_s}^{(j)}\left(\bfa{\psi}_{k}^{(j)},\bfa{\xi}\right) = \lambda_{k}^{(j)} s^{(j)}\left(\bfa{\psi}_{k}^{(j)},\bfa{\xi}\right)\,,
\eeq
Such operators are defined as follows \cite{gne,mcontinua17,mcl}:
\beq\label{pouu}
\bsp
a_{c_s}^{(j)}(\bfa{\phi},\bfa{\psi}) & = \sum_{i=1}^2 \int_{\omega_j} \kappa_i(\bfa{x},p^n_{i,\tu{ms},s+1}) \nabla \phi_i\cdot \nabla \psi_i \, \dx\,,\\
%see \cite{mcontinua17} p28 global operator a^j(u,v)
%
s^{(j)}(\bfa{\phi},\bfa{\psi}) & = \sum_{i=1}^2 s_i^{(j)}(\phi_i,\psi_i) = \sum_{i=1}^2 \int_{\omega_j} \kappa_i(\bfa{x},p^n_{i,\tu{ms},s+1}) \left( \sum_{l=1}^{N_{v}} |\nabla \chi_{l,i}|^2\right) \phi_i \psi_i \, \dx\,,
%j can be some l to sum up purpose
\end{split}
\eeq
for any $\bfa{\phi} = (\phi_1,\phi_2), \bfa{\psi}=(\psi_1,\psi_2) \in \bfa{V}_{\tu{snap}}(\omega_j)\,.$ Here, each $\chi_{l,i}$ is defined as in \eqref{schi}.
%in which $\{\chi_{j,i}\}_{j=1}^{N_{in}}$ is from (\ref{chiu}).
%
%assuming that the diffusion term dominate the reaction term.
%or correctly defined with interaction term:
%\beq\label{chic}
%\bsp
%-\div (\kappa_1 \nabla \chi_{j,1}) +Q_s (\chi_{j,1} - \chi_{j,2})&= 0 \ \ \textrm{in } K \in \omega_j\,,\\
%-\div (\kappa_2 \nabla \chi_{j,2}) +Q_s (\chi_{j,2} - \chi_{j,1})&= 0 \ \ \textrm{in } K \in \omega_j\,,\\
%\chi_{j,i} & = \chi_{j,i}^0\ \ \textrm{on} \  \ \partial K\,, \quad \forall K \in \omega_j\,,\\
%\chi_{j,i} &= 0 \ \ \textrm{on} \  \ \partial \omega_j\,,
%\end{split}
%\eeq
%where each $\chi_{j,i}^0$ is a standard linear (and continuous) partition of unity function.

After sorting the eigenvalues $\lambda_{k}^{(j)}$ from (\ref{eec}) in rising up order, we choose the first smallest $L_{\omega_j}$ eigenfunctions and still call them $\bfa{\psi}_1^{(j)}, \cdots,\bfa{\psi}_{L_{w_j}}^{(j)}\,.$   
%Utilizing these eigenfunctions, we can construct the corresponding eigenvectors in the space of snapshots via the formulation
%\[\bfa{\phi}_k^{(j),\tu{off}} = 
%\sum_{l=1}^{J_j} (\psi_k^{(j)})_{l} \; (\bfa{\psi}_l^{(j), \tu{snap}})\,,\]
%for $k = 1, \cdots , L_j$, where 
%$(\psi_k^{i})_l$ stands for the $l$th component of the vector $\bfa{\psi}_k^{(j)}$.
Now, the $k$th
multiscale basis functions for $\omega_j$ are defined by 
%\begin{equation}\label{msc}
\[\bfa{\psi}_{k}^{(j),\tu{ms}} =  (\chi_{j,1} \, \psi_{k,1}^{(j)} \,, \chi_{j,2} \, \psi_{k,2}^{(j)})\,,\]
%j can be some l to sum up purpose
%\end{equation}
where each $\chi_{j,i}$ is defined as in \eqref{schi} and $1 \leq k \leq L_{w_j}\,.$
%, and $\{\chi_{j,i}\}_{j=1}^{N_{in}}$ is from (\ref{chiu}).  
These functions form the local offline multiscale space 
%\beq
\[\bfa{V}_{\tu{ms}}(\omega_j)= \text{span}\left \{\bfa{\psi}_{k}^{(j),\tu{ms}} \, \bigr | \, 1\leq k\leq L_{w_j} \right \}\,.\]
%\eeq
Last, we obtain the multiscale space (as the global offline space)
%at the $(s+1)$th time step and the $n$th Picard iteration: 
\[
\bfa{V}_{\tu{ms}} =\sum\limits_{j=1}^{N_v} \bfa{V}_{\tu{ms}}(\omega_j) = \text{span}\left \{\bfa{\psi}_{k}^{(j),\tu{ms}} \, \bigr | \, 1 \leq j \leq N_v \,, 1\leq k\leq L_{w_j} \right \}\,,\]
where we seek solution for the later $(n+1)$th Picard iteration. 
%%%

\begin{remark}\label{conv_Lamda}
Note that the local spectral problems, (\ref{eeunc}) and (\ref{eec}), are constructed taking into account the convergence analysis. The convergence rate of the proposed methods is proportional to $1/\Lambda$, where $\Lambda$ represents the minimum among all eigenvalues that correspond to eigenfunctions that are not included in the global offline space. This suggests that we include optimal number of multiscale basis functions associated with the smallest eigenvalues (see \cite{chung2016adaptive}, for instance).
\end{remark}

%%sufficient \todo{(but not too large)} 

\subsection{GMsFEM for coupled nonlinear system of equations}\label{alg}
%\cite{cemnlporo}
%In the previous Sections and Subsections, the spaces $\bfa{V}=\bfa{H}_0^1(\Omega)$ is continuous. Toward computation, we need some finite dimensional analogues of the multiscale spaces $\bfa{V}_{\tu{ms}}$. Thus, in our numerical simulations, we solve the considered problem using the fine mesh, via a proper finite element method (\cite{gne,gnone}).
%\cite{cemnlporo}

At the fixed time step $(s+1)$th, our tactic (as in \cite{gne, cemnlporo}) is using either the uncoupled or coupled GMsFEM (in Subsections \ref{uncg} or \ref{cg}) and the corresponding constructed offline multiscale space $\bfa{V}_{\tu{ms}}= \bfa{V}^0_{\tu{ms},1}$ (introduced at the end of Subsection \ref{gmsover}) to solve the problem (\ref{r1}) with equivalent variational form (\ref{r1e}) via linearization based on Picard iteration.  
%at the end of Picard iteration process within the time step $s$   
During the online stage, the model reduction scheme reads: starting with $\bfa{p}_{\tu{ms},0} \in \bfa{V}_{\tu{ms}}$, 
at the $(s+1)$th time step, we guess $\bfa{p}^0_{\tu{ms},s+1}\in \bfa{V}_{\tu{ms}}$ 
% = p_{s,h} 
and iterate in $\bfa{V}_{\tu{ms}}$ from \eqref{r1el}:
\begin{align}\label{ongmspicard}
\left(\frac{p^{n+1}_{i,\tu{ms},s+1} - p_{i,\tu{ms},s}}{\tau} , \phi_i \right) + a_i(p^{n+1}_{i,\tu{ms},s+1},\phi_i;p^n_{i,\tu{ms},s+1}) &+  b_i(\bfa{p}^{n+1}_{\tu{ms},s+1},\phi_i;\bfa{p}^n_{\tu{ms},s+1}) \nonumber \\
&+ q_i(\bfa{p}^{n+1}_{\tu{ms},s+1},\phi_i;\bfa{p}^n_{\tu{ms},s+1}) = (f_{i,s+1},\phi_i)\,,
\end{align}
with $\bfa{\phi} = (\phi_1,\phi_2)\in \bfa{V}_{\tu{ms}}$, for $n=0,1,2, \cdots\,,$ until reaching \eqref{pt} at some $\alpha$th Picard step.  %Here, $G_{\tu{ms},s+1}$ is the bijective restriction of the Picard iteration map $G_{s+1}$ onto $\bfa{V}_\tu{ms}$.  W
We employ \eqref{pdata} for choosing the previous time data $\bfa{p}_{\tu{ms},s+1} = \bfa{p}_{\tu{ms},s+1}^{\alpha}$ to advance on the next temporal step in \eqref{r1ed}.

\section{Numerical results}\label{numer}
%For numerical examples, a physical meaning can be associated with.   That is, we are given a small scale $\e$ porous media with dual-continuum background consisting of matrix (permeability $\kappa_1$) and small highly connected fracture network (permeability $\kappa_2$), where \todo{$\kappa_1 < \kappa_2$} (see Fig.\ 1).  We also have a test for the case \todo{heterogeneous permeabilities} (see Fig.\ 2 of \cite{Spiridonov2019}).

%\textbf{Number of Picard steps} (for case Richard's fixed number of interations, 
%8
%and case Picard iterative steps until termination $\delta_0 = 10^{-5}$)?

%\todo{

%If having time, Richard helps test the case $f_1 = 1$, $f_2 = -1\,,$ as usually in dynamical systems, we need $f_1 + f_2 = 0\,.$}

%\bigskip

%%%%%
In this section, we will show some numerical experiments to demonstrate the performance of our strategy, for both the uncoupled and coupled GMsFEM, with linearization by Picard iteration procedure.  In the simulations, we consider the high-contrast coefficients $a_i(\bfa{x})$ (for $i = 1,2$), which are presented in Fig.\ \ref{perm1}, having $\Omega = [0,1]^2\,.$  The blue regions are for $a_1(\bfa{x}) =10$, $a_2(\bfa{x}) =1$, whereas the yellow regions (channels) represent $a_1(\bfa{x}) = 10^5, a_2(\bfa{x}) = 10\,.$  Each coefficient $a_i(\bfa{x})$ is defined on $128 \times 128$ fine grid, while the coarse-grid size is $H=1/16\,.$  We choose the terminal time $T= S\tau = 2$, and the time step size is taken as $\tau = 1/10$.  Each initial pressure $p_i$ is zero.  The Picard iterative termination criterion is $\delta_0 = 10^{-5}$, which ensures the linearization process's convergence.

We assume that each medium is isotropic (and the anisotropic case is handled similarly).  Then, each hydraulic conductivity tensor becomes $\kappa_i$ multiplying with the identity matrix, for $i=1,2\,.$ %(\cite{kisoho}). 

\noindent \textbf{Example 1.}  We consider the following problem as a special case of \eqref{r0} (using the provided conditions there): find $\bfa{p}=(p_1, p_2) \in \bfa{V}$ in the domain $(0,T) \times \Omega$ such that 
\beq\label{nex}
\bsp
\frac{\partial p_1}{\partial t} - \div \left(\frac{a_1(\bfa{x})}{1+|p_1|} \nabla p_1 \right)+30{\bf b}_{11}\cdot \nabla p_1 + 30{\bf b}_{12}\cdot \nabla p_2 + \frac{10^5}{1+|p_1|}(p_1-p_2) = 1 \,,\\
\frac{\partial p_2}{\partial t} - \div \left(\frac{a_2(\bfa{x})}{1+|p_2|} \nabla p_2 \right)+30{\bf b}_{21}\cdot \nabla p_1 + 30{\bf b}_{22}\cdot \nabla p_2 + \frac{10^5}{1+ |p_2|}(p_2-p_1) = 1 \,,
\end{split}
\eeq
with the initial condition $p_i(0,\bfa{x})= 0$ in $\Omega\,,$ and with the Dirichlet boundary condition $p_i(t,\bfa{x})=0$ on $(0,T) \times \partial \Omega\,$.
%80 here is in mass transfer term \frac{80}{1+ |p_2|} = \zeta shape factor*\kappa_2, where kappa_2 is hydraulic conductivity
%\cite{Spiridonov2019} in Spiridonov p11, https://doi.org/10.1016/j.cam.2019.112594
%\zeta = 10^3 is reciprocal (multiplicative inverse) of area.  See also \cite{nlnlmc31}
The vector fields are given by ${\bf b}_{11}= {\bf b}_{21} = (p_1,p_1)$, ${\bf b}_{12}={\bf b}_{22} = (-p_2,-p_2)$, and the functions $a_1(\bfa{x})$, $a_2(\bfa{x})$ are high-contrast (multiscale) coefficients shown in Fig.~\ref{perm1}.  
%\todo{Richard, the initial and boundary conditions are as in \eqref{r0}, that is, $p_i(0,\bfa{x})= 0$ in $\Omega\,,$ and $p_i(t,\bfa{x})=0$ on $(0,T) \times \partial \Omega\,,$ respectively?} 

This example has a physical meaning.  That is, for the given dual-continuum porous media, the first equation can describe a flow (the homogenized first Richards equation) in small highly connected fracture network as the first continuum, and the second equation can represent a flow (the homogenized second Richards equation) in the matrix as the second continuum \cite{Spiridonov2019}.  Our model problem \eqref{nex} can also be attained from upscaling of some highly heterogeneous media using Representative Volume Element (RVE) Approach \cite{bridge20}.  In that paper, the authors employ sub RVE scale to form a multi-continuum model (at fine-grid level), that is upscaled further.  In our context, we assume that the upscaled multi-continuum model can possess highly heterogeneous coefficients.  For example, with reference to the second continuum, the first continuum owns much larger $a_i(\bfa{x})$ in its channels (that are much bigger with respect to RVE scales).  More generally, with varying permeabilities $a_1$ and $a_2$, our proposed strategy can be still effective.  

%A fine-scale reference solution $p_{2,h}$ (obtained from the FEM) is plotted Fig.\ \ref{sol1}.

At the last time step $S$th and at the last Picard iteration, the coupled GMsFEM solution $p_{i,\tu{ms},S}$ in \eqref{pdata} will be compared with the fine-scale FEM solution $p_{i,h,S}$, by the relative error formula in weighted $L^2(\Omega)$-norm:
\beq
\label{eq:relativeL2}
100\cdot\frac{\norm{p_{i,\tu{ms},S} - p_{i,h,S}}_{L^2(\Omega)}}{\norm{ p_{i,h,S}}_{L^2(\Omega)}}\,, \quad \text{for } i=1,2\,.
\eeq
Here, the reference solution $\bfa{p}_{h,S}$ is from \eqref{r1elh} in the fine grid $\bfa{V}_h$, and the multiscale solution $\bfa{p}_{\tu{ms},S}$ is from \eqref{ongmspicard} in the coarse grid $\bfa{V}_{\tu{ms}}\,.$
%The main idea of the GMsFEM is to construct multiscale basis functions on a coarse grid,
%https://www.sciencedirect.com/science/article/pii/S0377042719301268

We denote by $DOF_{\tu{fine}}$ the number of total degrees of freedom (basis functions) for the fine-scale FEM.  Tables \ref{errors11} and \ref{errors12} respectively represent the errors obtained from the coupled and uncoupled GMsFEM with the permeability coefficients $a_i(\bfa{x})$ (see Figs.\ \ref{perm11} and \ref{perm12}). A fine-scale reference solution $p_{2,h,S}$ (solved via the FEM) is plotted ig.\ \ref{sol11}, whereas Fig.\ \ref{sol12} describes the coupled GMsFEM solution $p_{2,\tu{ms},S}\,.$ %(which is more accurate than the FEM solution \jrp{Tina, I think it might be better to remove this sentence as it is not always the case. But if you wanted to put this sentence for some reason, please let me know.}).

We tested the performance by using a variety of number of basis functions. Let $\tu{dim} (\bfa{V}_{\tu{ms}})$ denote the number of total degrees of freedom used for each GMsFEM implementation.  The results in Tables \ref{errors11} and \ref{errors12} demonstrate that our scheme is robust with respect to the contrast in the coefficients $a_i(\bfa{x})$ as well as able to give accurate approximation of solution with few local basis functions per each coarse neighborhood.  Also, from Tables \ref{errors11} and \ref{errors12}, we observe first that both the coupled and uncoupled GMsFEM solutions nicely converge as the number of local basis functions increases, and secondly, for large interaction coefficients, the coupled GMsFEM has higher accuracy than the uncoupled GMsFEM.  That second advantage is remarkable, as the GMsFEM involves the exchange terms in multiscale basis construction, while the uncoupled GMsFEM only takes into account the diffusion terms.
%the coupled GMsFEM generally has higher accuracy compared with the uncoupled GMsFEM. The table shows that 

%%%%%
\bigskip

\iffalse
\todo{\textbf{* Richard will combine} the above template for Picard with our previous paper \cite{mcl}'s template for numerical results of uncoupled and coupled GMsFEM, as follows:}  In this section, we present numerical results for both coupled and uncoupled GMsFEM. Let $\Omega = [0,1]^2$ and assume that each considered medium is isotropic.  Then, the hydraulic conductivity tensors become $\kappa_i$ multiplying with matrix, for $i=1,2$ \cite{kisoho}.  We thus consider the following problem (the generalization of \eqref{r1}, with variable $\bfa{x}$ in $\kappa_1, \; \kappa_2$):
\beq\label{nex}
\bsp
\frac{\partial p_1}{\partial t} - \div \left(\frac{a_1(\bfa{x})}{1+|p_1|} \nabla p_1 \right)+30{\bf b}_1\cdot \nabla p_1 - 30{\bf b}_2\cdot \nabla p_2 + \frac{80}{1+|p_1|}(p_1-p_2) = 1 \ \ \text{in} \ \ \Omega\,,\\
%
\frac{\partial p_2}{\partial t} - \div \left(\frac{a_2(\bfa{x})}{1+|p_2|} \nabla p_2 \right)+30{\bf b}_2\cdot \nabla p_2 - 30{\bf b}_1\cdot \todo{\nabla p_1} + \frac{100}{1+ \todo{|p_2|}}(p_2-p_1) = 1 \ \ \text{in} \ \ \Omega\,,
\end{split}
\eeq
where the vector fields ${\bf b}_1 = (p_1,p_1)$, ${\bf b}_2 = (p_2,p_2)$, and the functions $a_1(\bfa{x})$, $a_2(\bfa{x})$ are high-contrast coefficients (\todo{functions not matrices?}) shown in Fig.\ \ref{perm}.

%\todo{Tina's suggestion: This homogenized system (\ref{nex}) and the whole paper should use variables $p_1, p_2$ to distinguish it from the original Richards equations (with old $p_1,p_2$).}

Some quite good results about errors are presented in Table \ref{errors11} and Table \ref{errors12}, for coupled and uncoupled GMsFEM, respectively.
\fi
\begin{figure}[H]
	\centering
	\begin{subfigure}{0.45\textwidth}
  \includegraphics[width=\textwidth]{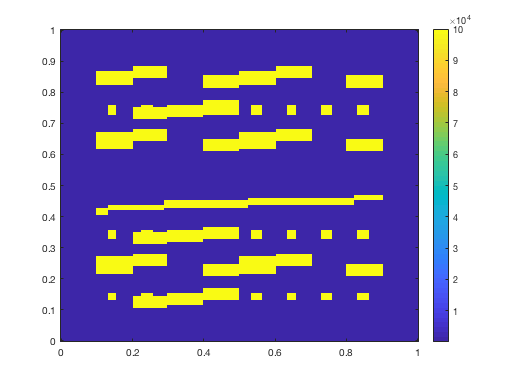}
  \caption{$a_1(\bfa{x})$. The value in each channel is $10^5$.}
  \label{perm11}
\end{subfigure}
\hfill
   \begin{subfigure}{0.45\textwidth}
  \includegraphics[width=\textwidth]{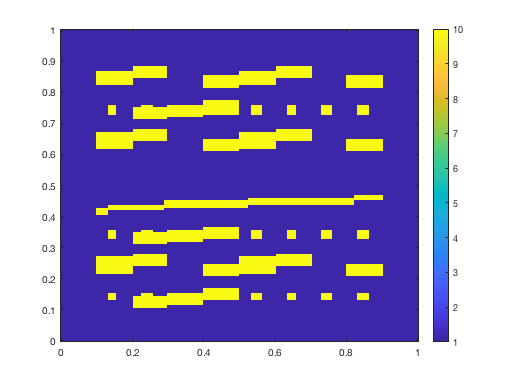}
  \caption{$a_2(\bfa{x})$. The value in each channel is 10.}
  \label{perm12}
 \end{subfigure}
 \caption{Coefficients $a_1(\bfa{x})$ and $a_2(\bfa{x})\,.$}
 \label{perm1}
\end{figure}
%\todo{Richard, may you take higher $a_1$, say, $10^5$ or so, to make significant difference with figure $a_2$?}

%%%%%
\begin{figure}[H]
	\centering
	\begin{subfigure}{0.45\textwidth}
  \includegraphics[width=\textwidth]{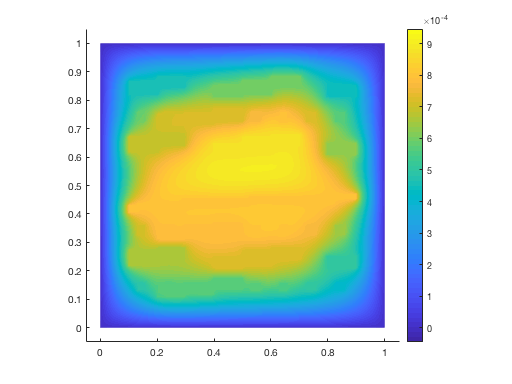}
  \caption{$p_{2,h,S}(T,\bfa{x})$, $DOF_{\tu{fine}} = 32258\,.$}
  \label{sol11}
\end{subfigure}
\hfill
   \begin{subfigure}{0.45\textwidth}
  \includegraphics[width=\textwidth]{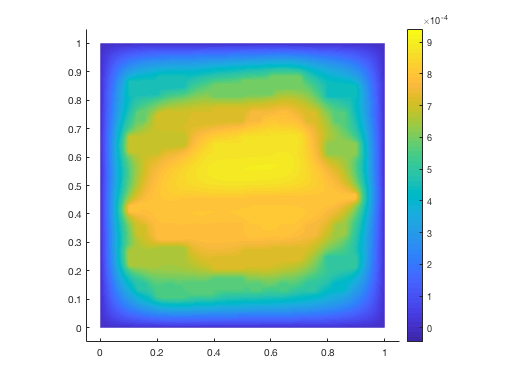}
  \caption{$p_{2,\tu{ms},S}(T,\bfa{x})$, dim($\bfa{V}_{\tu{ms}})=1800\,.$}
  \label{sol12}
 \end{subfigure}
 \caption{Solutions using FEM and Coupled GMsFEM.}
 \label{sol1}
\end{figure}

\begin{table}[h]
\label{Errors1}
\centering
\begin{subtable}{0.48\textwidth}
\begin{tabular}{|c|c|c|}
  \hline
 \multirow{2}{*}{dim($\bfa{V}_{\tu{ms}}$)} & $p_1$  & $p_2$ \\ 
 \cline{2-3} & $L^2$ Errors(\%) & $L^2$ Errors(\%)\\
  \hline \hline
 900 &  3.4208480& 3.56363346   \\
1800 & 0.56111391 & 0.70133747\\
[.2em]
%2250&3.99116237 &  0.20659869 & 1.83096203 & 0.06450121 \\
%[.2em]
2700 &  0.30925842  & 0.45617447 \\
[.2em]
3600& 0.18980716 & 0.33344175\\
[.2em]
4500 & 0.10368591 &0.23142539\\
%[.2em]
%5400& 0.25213653 &0.25991452\\
%[.2em]
%7200  & 0.1418&0.1402\\
%[.2em]
%9000  & 0.0579&0.0603\\
%[.2em]
%11700  & 0.0077&0.0086\\

  \hline
\end{tabular} \\
\caption{ Coupled GMsFEM, $DOF_{\tu{fine}}$ = 32258. 
%Here $N_x$ means the number of coarse elements in $x$ direction and $n_x$ means the number of fine elements in a coarse element in $x$ direction. 
} 
\label{errors11}
\end{subtable}
%\end{table}
%\begin{table}[h]
\begin{subtable}{0.48\textwidth}
\begin{tabular}{|c|c|c|}
  \hline
 \multirow{2}{*}{dim($\bfa{V}_{\tu{ms}}$)}& $p_1$  & $p_2$\\ 
 \cline{2-3}& $L^2$ Errors(\%) & $L^2$ Errors(\%)\\
  \hline \hline
900&  9.39526936&9.40019948\\
1800 & 3.42474881 & 3.42323362\\
[.2em]
%2250&2.61694952  & 0.11224977& 2.61665715 & 0.11239874 \\
%[.2em]
2700 & 0.76386230& 0.76127447 \\
[.2em]
3600 &  0.56297485  & 0.56092131 \\
[.2em]
4500  & 0.37650901&0.37607187\\
%[.2em]
%5400 & 0.26102060 &0.26500894\\
%[.2em]
%7200  & 0.1501&0.1485\\
%[.2em]
%9000 & 0.0579&0.0602\\
%[.2em]
%11700& 0.0077&0.0086\\

  \hline
\end{tabular} \\
\caption{ Uncoupled GMsFEM, $DOF_{\tu{fine}}$ = 32258. 
%Here $N_x$ means the number of coarse elements in $x$ direction and $n_x$ means the number of fine elements in a coarse element in $x$ direction. 
} 
\label{errors12}
\end{subtable}
\caption{Errors.}
\end{table}

\noindent \textbf{Example 2.}  We consider a model using more complicated right hand side functions, including both sinks and sources with commonly used constitutive relationships between the water content and the pressure head, namely, van Genuchten-Mualem model (note that Gardner-Basha model also works) \cite{15mualemv, santos2006hydraulic}.  Using the van Genuchten-Mualem model in \cite{15mualemv}, we assume that its volumetric water content function is the identity, and we focus on the nonlinearity of the unsaturated hydraulic conductivity only. Our problem is to find $\bfa{p}=(p_1, p_2) \in \bfa{V}$ in the domain $(0,T) \times \Omega$ such that 
\beq\label{nex2}
\bsp
\frac{\partial p_1}{\partial t} - \div \left(a_1(\bfa{x})K_r(p_1) \nabla p_1 \right)+30{\bf b}_{11}\cdot \nabla p_1 + 30{\bf b}_{12}\cdot \nabla p_2 + \frac{10^2}{1+|p_1|}(p_1-p_2) = f_1(\bfa{x}) \,,\\
\frac{\partial p_2}{\partial t} - \div \left(a_2(\bfa{x}) K_r(p_2) \nabla p_2 \right)+30{\bf b}_{21}\cdot \nabla p_1 + 30{\bf b}_{22}\cdot \nabla p_2 + \frac{10^2}{1+ |p_2|}(p_2-p_1) = f_2(\bfa{x}) \,,
\end{split}
\eeq
%a_i(x) is as K_s(x) the saturated hydraulic conductivity: see 10.2136/vzj2014.12.0171
%
with the initial condition $p_i(0,\bfa{x})= 0$ in $\Omega\,,$ and with the Dirichlet boundary condition $p_i(t,\bfa{x})=0$ on $(0,T) \times \partial \Omega\,$, where we let  $\Omega = [0,1]^2$. 
\noindent Here, the relative hydraulic conductivity $K_{r}$ has the form
\begin{align}\label{Kr}
\begin{split}
K_{r}(p) = \frac{\left [1- (\alpha_{v}|p|)^{n'-1} \left(1+(\alpha_{v}|p|)^{n'}\right)^{-m'}\right ]^2} {\left(1 + (\alpha_{v}|p|)^{n'}\right)^{\frac{m'}{2}}}\,,
\end{split}
\end{align}
where we assume that the geometric mean of $\alpha_v$ is $0.15$ m$^{-1}$, $n'=2, m' = 0.5\,.$
The high-contrast $a_i(\bfa{x})$ for $i = 1,2$ are presented in Fig.~\ref{perm2}. We have $a_1(\bfa{x}) =10$, $a_2(\bfa{x}) =10$ in the blue regions and $a_1(\bfa{x}) = 10^5, a_2(\bfa{x}) = 100$ in the yellow regions. The vector fields are ${\bf b}_{11}= {\bf b}_{21} = (p_1,p_1)$, ${\bf b}_{12}={\bf b}_{22} = (-p_2,-p_2)$, and the source and sink terms are given by $f_1(\bfa{x}) = e^{x_1+x_2}$, $f_2(\bfa{x}) = -e^{x_1+x_2}\,,$ respectively.

We again employ the GMsFEM coupled with the Picard iteration to find the solutions $p_{1,\tu{ms},S}$ and $p_{2,\tu{ms},S}$ at the last time step $S$.
At the $(s+1)$th time step, we choose the initial guess ($\bfa{p}^0_{\tu{ms},s+1}\in \bfa{V}_{\tu{ms}}$) for the Picard iteration to be the solution data from the previous time step, $\bfa{p}_{\tu{ms},s} = \bfa{p}_{\tu{ms},s}^{\alpha}$, where $\bfa{p}_{\tu{ms},s}^{\alpha}$ is the solution from the final Picard iteration step at the $s$th time step.  A fine-scale reference solution $p_{1,h,S}$ (obtained via the FEM) is plotted Fig.\ \ref{sol21}, while Fig.\ \ref{sol22} represents the coupled GMsFEM solution $p_{1,\tu{ms},S}\,.$

The relative $L^2$ errors (\ref{eq:relativeL2}) with respect to the total number of multiscale basis functions for both the coupled and uncoupled GMsFEM are presented in Tables \ref{errors21} and \ref{errors22}, where we observe that the errors decay as the number of degrees of freedom increases.  Both the methods require only small number of local basis functions to achieve high accuracy approximations to the solutions. The coupled GMsFEM has better accuracy than the uncoupled GMsFEM, and this is because the interaction between the two continua is disregarded in the construction of local multiscale basis of the uncoupled GMsFEM.

%\begin{align}\label{fhard}
% f_1(\bfa{x}) = \begin{cases}
%        1 \quad &\text{ in } [0, 0.1]^2\,,\\
%        -1 \quad &\text{ in } [0.9, 1]^2\,,\\
%         0 \quad &\text{ elsewhere }\,,
%       \end{cases}
%  \qquad
%  f_2(\bfa{x}) = \begin{cases}
%        -1 \quad &\text{ in } [0, 0.1]^2\,,\\
%        1 \quad &\text{ in } [0.9,1]^2\,,\\
%         0 \quad &\text{ elsewhere }\,.
%       \end{cases}
%\end{align}

\begin{figure}[H]
	\centering
	\begin{subfigure}{0.45\textwidth}
  \includegraphics[width=\textwidth]{per_a_1.png}
  \caption{$a_1(\bfa{x})$. The value in each channel is $10^5$.}
  \label{perm21}
\end{subfigure}
\hfill
   \begin{subfigure}{0.45\textwidth}
  \includegraphics[width=\textwidth]{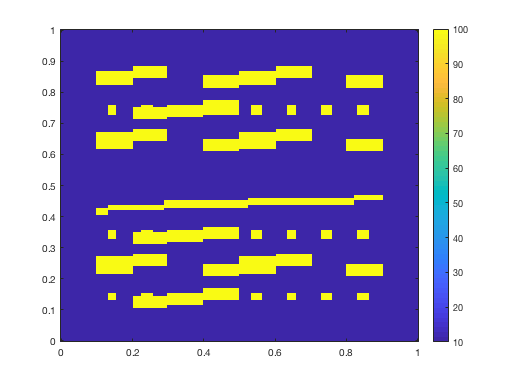}
  \caption{$a_2(\bfa{x})$. The value in each channel is 100.}
  \label{perm22}
 \end{subfigure}
 \caption{Coefficients $a_1(\bfa{x})$ and $a_2(\bfa{x})\,.$}
 \label{perm2}
\end{figure}

\begin{figure}[H]
	\centering
	\begin{subfigure}{0.45\textwidth}
  \includegraphics[width=\textwidth]{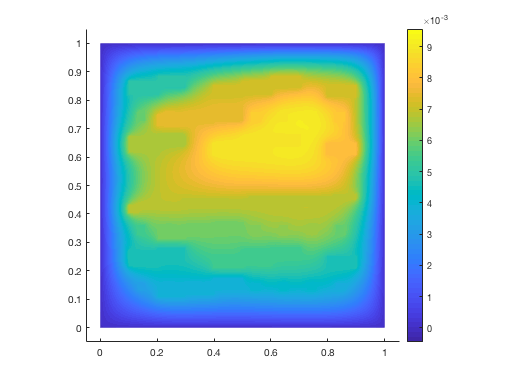}
  \caption{$p_{1,h,S}(T,\bfa{x})$, $DOF_{\tu{fine}} = 32258\,.$}
  \label{sol21}
\end{subfigure}
\hfill
   \begin{subfigure}{0.45\textwidth}
  \includegraphics[width=\textwidth]{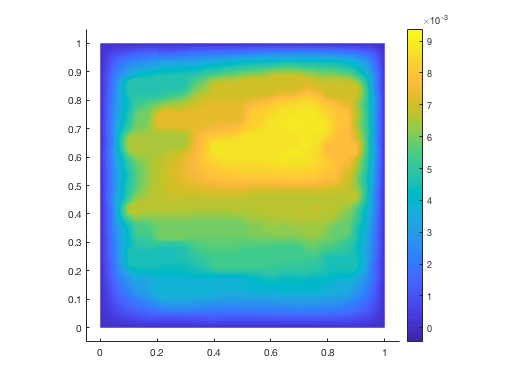}
  \caption{$p_{1,\tu{ms},S}(T,\bfa{x})$, dim($\bfa{V}_{\tu{ms}})=1800\,.$}
  \label{sol22}
 \end{subfigure}
 \caption{Solutions using FEM and Coupled GMsFEM.}
 \label{sol2}
\end{figure}

\begin{table}[h]
\label{Errors2}
\centering
\begin{subtable}{0.48\textwidth}
\begin{tabular}{|c|c|c|}
  \hline
 \multirow{2}{*}{dim($\bfa{V}_{\tu{ms}}$)} & $p_1$  & $p_2$ \\ 
 \cline{2-3} & $L^2$ Errors(\%) & $L^2$ Errors(\%)\\
  \hline \hline
 900 &  5.19096989& 5.62412231   \\
1800 & 2.52918581 & 2.01972957\\
[.2em]
%2250&3.99116237 &  0.20659869 & 1.83096203 & 0.06450121 \\
%[.2em]
2700 &  0.57498862  & 0.50771565 \\
[.2em]
3600& 0.43124964 & 0.38351447\\
[.2em]
4500 & 0.33847662 &0.26529760\\
%[.2em]
%5400& 0.25213653 &0.25991452\\
%[.2em]
%7200  & 0.1418&0.1402\\
%[.2em]
%9000  & 0.0579&0.0603\\
%[.2em]
%11700  & 0.0077&0.0086\\

  \hline
\end{tabular} \\
\caption{ Coupled GMsFEM, $DOF_{\tu{fine}}$ = 32258. 
%Here $N_x$ means the number of coarse elements in $x$ direction and $n_x$ means the number of fine elements in a coarse element in $x$ direction. 
} 
\label{errors21}
\end{subtable}
%\end{table}
%\begin{table}[h]
\begin{subtable}{0.48\textwidth}
\begin{tabular}{|c|c|c|}
  \hline
 \multirow{2}{*}{dim($\bfa{V}_{\tu{ms}}$)}& $p_1$  & $p_2$\\ 
 \cline{2-3}& $L^2$ Errors(\%) & $L^2$ Errors(\%)\\
  \hline \hline
900&  41.01167067&4.06627278\\
1800 & 4.59485651 & 4.84008130\\
[.2em]
%2250&2.61694952  & 0.11224977& 2.61665715 & 0.11239874 \\
%[.2em]
2700 & 4.25388598& 2.44607018 \\
[.2em]
3600 &  2.40946791  & 1.96508453 \\
[.2em]
4500  & 0.92513418&0.70049912\\
%[.2em]
%5400 & 0.26102060 &0.26500894\\
%[.2em]
%7200  & 0.1501&0.1485\\
%[.2em]
%9000 & 0.0579&0.0602\\
%[.2em]
%11700& 0.0077&0.0086\\

  \hline
\end{tabular} \\
\caption{ Uncoupled GMsFEM, $DOF_{\tu{fine}}$ = 32258. 
%Here $N_x$ means the number of coarse elements in $x$ direction and $n_x$ means the number of fine elements in a coarse element in $x$ direction. 
} 
\label{errors22}
\end{subtable}
\caption{Errors.}
\end{table}

%%%%%%
\bigskip

%%%discussion%%%
\section{Discussion}\label{discuss}

%\todo{We wish to emphasize that one of our main goals in this paper is the homogenization's derivation for the coupled system of dual-continuum Richards equations, based on hierarchical algorithm for the cell problems using intensive analysis (in Appendices \ref{sec:appendix_homogenization}, \ref{hier} and \ref{proofthm}).  Another main goal of our paper is to handle the nonlinearity of the upscaled Richards equations using Picard iteration procedure.  Our last main goal is to utilize the generalized multiscale finite element method (GMsFEM) at each Picard iteration, to improve efficiency.  These goals have already made our paper quite long (even in the current dimension two).  Basically, the method in this paper can be generalized to dimension three (3D).  However, we did not perform three-dimensional simulations here to keep the paper's length proper.  We expect our method still works well if more realistic three-dimensional high-contrast coefficients $a_i(\bfa{x})$ are involved in \eqref{nex}.  These coefficients can arise from media with complex channels,
%multicontinua and heterogeneity in dimension three.  The reasons are that in such three-dimensional settings, the homogenization process here is theoretically valid (as in \cite{rh1}) while the GMsFEM can still has good numerical performance (as in \cite{Spiridonov2019}).  In particular, we expect that our method has nice convergence using coupled multiscale basis functions for unsaturated multi-continuum flow problems in three-dimensional formulations (as in \cite{Spiridonov2019}).}

We note that our numerical strategy involves a dual-continuum generalized multiscale finite element method (GMsFEM) in dimension two.   Principally, it can be extended to multi-continuum systems, with discrete fractures (this type of fractures has not been considered in our paper).  In most cases, the online multiscale techniques can be utilized in order to obtain nicer results for nonlinear
problems \cite{online29nl,chung2015residual}. Moreover, one can create nonlinear basis functions for coarse-grid approximations of solutions with good accuracy \cite{nlnlmc31,nlnlmc32,nlnlmc33,nlbasis34,Msnon}.
%online \cite{gne} strain-limiting
%Richards eq also in nlnlmc31
%Parallel computational techniques can be also employed to the construction of multiscale basis functions and coarse-grid system, so that basis functions are built without any interaction among local regions. 
Such multiscale basis functions as well as parallel computing can help establish approximations of solutions with higher accuracy, for complicated
processes in reality \cite{Spiridonov2019}.  
%The constraint energy minimizing generalized multiscale finite element method (CEM-GMsFEM) for dual continuum linear model in \cite{cheung2018constraint} has a possibility to extend to our nonlinear homogenized Richards equations.  
With nonstationary equations, including coupled Richards equations, recent splitting methods \cite{esplit20} would be interesting to apply.  Besides, discontinuous Galerkin (DG) method has been addressed in \cite{rdg}.  Regarding conservation laws, further hierarchical approaches could be useful for discontinuous Galerkin (DG) methods 
%\cite{rdg}
%\cite{radg}
and even finite volume schemes \cite{hier1et, hier2et}.

%radg https://doi.org/10.14311/TPFM.2020.004
%
%Also, the hierarchical finite element technique here has promising applications in kinetic equations (see \cite{hierkinet}, for instance).
%
%Contingently, hierarchical approaches could be useful for discontinuous Galerkin (DG) methods 
%\cite{rdg}
%\cite{radg}
%and even finite volume schemes (\cite{hier1et, hier2et}).
%\cite{rfv}
%https://doi.org/10.14311/TPFM.2020.004
%file:///C:/Users/Tina%20Mai/Google%20Drive/Papers/Multiscale/Richard%20Tony%20Tina/Paper%202/References/2016%20thesis%20DG%20Richards%20eqs%201D.pdf
%
%applied engineering problems (for example, in problems with capillary
%force and for compressible fluid flow or deformable porous media).  

%Convergence analysis of uncoupled and coupled GMsFEM.

%%%%%
\section{Conclusions}\label{conclude}
This paper investigated a multiscale method for simulations of the dual-continuum unsaturated flow problem modeled by coupled nonlinear Richards equations, in complex heterogeneous fractured porous media.  To handle the multiple scales, our approach is that starting from a microscopic scale, the coupled dual-continuum Richards equations are upscaled using homogenization via the two-scale asymptotic expansion, toward a system of coupled nonlinear homogenized equations, at an intermediate level of scale.  Utilizing a hierarchical finite element scheme, the homogenization's effective coefficients are calculated by solving the obtained cell problems.  To deal with the nonlinearity,  after temporal discretization, we employ the Picard iteration process for spatial linearization of the homogenized Richards equations.  Within each Picard iteration, some degree of multiscale still remains from the intermediate scale, so we apply the generalized multiscale finite element method (GMsFEM) incorporating with the coupled dual-continuum homogenized equations, in order to upscale the system to a macroscopic (coarse-grid) scale.  The GMsFEM's mission is to systematically create either uncoupled or coupled multiscale basis functions, independently for each equation (uncoupled GMsFEM), or commonly for the system (coupled GMsFEM).  Such multiscale basis functions can guarantee highly accurate coarse-grid approximation of the solution and clear representation of interactions among continua via the coupled GMsFEM.  These expectations and convergence are validated by several numerical results for the proposed method.  In addition, we theoretically proved the global convergence of the Picard iteration process in Appendix \ref{cp}.  Finally, we discussed some potential directions. 
%Our expectations are supported by a theoretical proof for global convergence of the Picard iteration process

%%%%
\bigskip

\vspace{20pt}

\noindent \textbf{Acknowledgements.}  

This work was performed at Lawrence Livermore National Laboratory.
Lawrence Livermore National Laboratory is operated by Lawrence
Livermore National Security, LLC, for the U.S. Department of Energy,
National Nuclear Security Administration under Contract DE-AC52-07NA27344
and LLNL-JRNL-820660.

Tina Mai's research was funded by Vietnam National Foundation for Science and 
Technology Development (NAFOSTED) under grant number 101.99-2019.326, and her research was carried out in Vietnam.  

The authors thank the Reviewers very much
for their thoughtful comments, which have led to an improvement in the quality of the paper.
%Viet Ha Hoang
%acknowledges the grant MOE2017-T2-2-144.

%%%%%%%%%%%

\bigskip

\vspace{20pt}

\noindent \textbf{Disclaimer.}
This document was prepared as an account of work sponsored by an agency of the
United States government.  Neither the United States government nor Lawrence
Livermore National Security, LLC, nor any of their employees makes any warranty,
expressed or implied, or assumes any legal liability or responsibility for the
accuracy, completeness, or usefulness of any information, apparatus, product, or
process disclosed, or represents that its use would not infringe privately owned
rights.  Reference herein to any specific commercial product, process, or
service by trade name, trademark, manufacturer, or otherwise does not
necessarily constitute or imply its endorsement, recommendation, or favoring by
the United States government or Lawrence Livermore National Security, LLC.  The
views and opinions of authors expressed herein do not necessarily state or
reflect those of the United States government or Lawrence Livermore National
Security, LLC, and shall not be used for advertising or product endorsement
purposes.

%%%%%%%%%%%

\bigskip

\vspace{90pt}

\appendix

%%%%%%%%%%%%%%%%%%%%%%%%%%%%%%%%%%%%%%%%%%%%%%%%%%%%%%%%%%%%%%%%%%%%
\section{Derivation of homogenization}\label{sec:appendix_homogenization}
In this section, we derive the homogenized equations corresponding to our original equations (\ref{eq:original}) using the two-scale asymptotic expansion \eqref{eq:asymp1} from \cite{2scale_nl_expand}.  
%for a generic sequence p^e(x),
%it is not obvious how a sequence {p^e(x)} = {p^e(x,y)} might be devised (derived).
%see paper 2 ref 5 topology Short introduction implication
Here, we have not rigorously proved the convergence.  However, basically, one can justify this asymptotic expansion under proper conditions and assuming sufficient regularity of the solutions of the two-scale model (see \cite{2scale_nl_expand}, for instance), or one can find some suitable topology so that the
% strong topology or weak* topology
solutions of the $\e$-problem \eqref{eq:original} converge \cite{ nguet1, allaire1, nguetseng2}, where $\e$ is the small scale.
%In \cite{nguet1}, there is no need to postulate the existence of the functions p_0, p_1 ... in two-scale asymptotic expansion \eqref{eq:asymp1}, since by Theorems 2 (or 1) and 3 such functions are available for a suitable subsequence from $\e$. 
Therefore, we can assume that those formal calculations can be validated.  

Regarding notation, as in \cite{pcell}, we let
\begin{equation}\label{H1p}
 H^1_{\#}(Y) = \{ g \in H^1_{\tu{loc}}(\mathbb{R}^2) \text{ such that $g$ is $Y$-periodic}\}
\end{equation}
be equipped with the norm $\| g \|_{H^1(Y)}\,.$  
We will need the quotient space 
\begin{equation}\label{qW}
 \mathcal{V}:= H^1_{\#}(Y) / \mathbb{R}
\end{equation}
defined as the space of classes of functions in $H^1_{\#}(Y)$ equaling up to an additive constant.  The following norm \cite{pcell} is considered in the space $\mathcal{V}$:
%p59 shape
\[||\phi||_{\mathcal{V}} =  ||\nabla_y \phi||_{\bfa{L}^2(Y)}\,.\] 

Now, with $p_j \in V$, $\bfa{p}=(p_1,p_2) \in \bfa{V}$, and $N^i_j (\cdot,p_j)$, $M_j (\cdot,\bfa{p}) \in \mathcal{V}= H^1_{\#}(Y)/\mathbb{R}\,,$ for $i,j,\nu,m,r = 1,2$, and $\bfa{e}^i$ as the $i$th unit vector in the standard basis of $\mathbb{R}^d$ ($d=2$), applying the procedure in \cite{rh2} to our considered original system \eqref{eq:original}, 
%Appendix \ref{homoeq}, 
we can find derivation of the homogenized equations, which are 
\begin{comment}
\begin{align*}
%\label{eq:homogenized0}
&\frac{\partial p_1}{\partial t} 
= \div (\bfa{K}_1^*(p_1)\nabla p_1) 
+\div \left[\left(\int_Y k_1(\bfa{y},p_1) \nabla_y M_1(\bfa{y}, p_1, p_2) \, \dy\right)(p_2 - p_1)\right]
\\ &+ \left[\left(\int_Y Q_1(\bfa{y}, p_1, p_2)N^i_2(\bfa{y}, p_2) \, \dy\right)\frac{\partial p_2}{\partial x_i} 
- \left(\int_Y Q_1(\bfa{y}, p_1, p_2) N^i_1(\bfa{y}, p_1) \, \dy\right)\frac{\partial p_1}{\partial x_i}\right]\\
 &- \left(\int_Y Q_1(\bfa{y}, p_1, p_2)(M_1(\bfa{y}, p_1, p_2)+M_2(\bfa{y}, p_1, p_2)) \, \dy\right) (p_2 - p_1) \\
&+\left[\left(\int_Y\frac{\partial Q_1}{\partial p_1^{\e}} (\bfa{y}, p_1, p_2) \cdot N^i_1 (\bfa{y}, p_1) \, \dy\right) \frac{\partial p_1}{\partial x_i } \right.\\
& \left. \qquad + \left(\int_Y\frac{\partial Q_1}{\partial p_1^{\e}} (\bfa{y}, p_1, p_2) \cdot M_1(\bfa{y}, p_1, p_2)\, \dy\right) (p_2-p_1)\right](p_2 - p_1)\\
&+\left[\left(\int_Y\frac{\partial Q_1}{\partial p_2^{\e}} (\bfa{y}, p_1, p_2) \cdot N^i_2 (\bfa{y}, p_2) \, \dy\right) \frac{\partial p_2}{\partial x_i }\right.\\
&\left. \qquad + \int_Y \left(\frac{\partial Q_1}{\partial p_2^{\e}}(\bfa{y}, p_1, p_2) \cdot M_2(\bfa{y}, p_1, p_2)\, \dy\right)(p_1 - p_2)\right](p_2 - p_1)\\
&+ f_1,\\
\end{align*}
\end{comment}
%
\begin{align}\label{eq:homogenized}
\begin{split}
\frac{\partial p_j}{\partial t} 
= & f_j + \div (\bfa{K}_j^*(p_j)\nabla p_j)\\
& + \sum_m \left \{ \div \left[ \left(\int_Y k_j(\bfa{y}, p_j) \nabla_y M_j(\bfa{y}, \bfa{p}) \, \dy\right)(p_m - p_j)\right] \right.\\
&\left. \hspace{46pt} +  (-1)^{j+m-1} \left(\int_Y Q_j(\bfa{y}, \bfa{p})N^i_m(\bfa{y}, p_m) \, \dy\right)\frac{\partial p_m}{\partial x_i} \right \}\\
 &+ \sum_m \left \{ -\left(\int_Y Q_j(\bfa{y}, \bfa{p}) (M_1(\bfa{y}, \bfa{p})+ M_2(\bfa{y}, \bfa{p}))  \, \dy\right)  \right.\\
 &\left.  \hspace{39pt} + \sum_r \left[\left(\int_Y\frac{\partial Q_j}{\partial p_r^{\e}} (\bfa{y}, \bfa{p}) \cdot N^i_r (\bfa{y}, p_r) \, \dy\right) \frac{\partial p_r}{\partial x_i} \right. \right.\\
& \left. \left. \hspace{73pt} + \sum_{\nu} \left(\int_Y\frac{\partial Q_j}{\partial p_r^{\e}} (\bfa{y}, \bfa{p}) \cdot M_r(\bfa{y}, \bfa{p})\, \dy\right)(p_{\nu} - p_r)\right] \right\} (p_m - p_j)\,,
\end{split}
\end{align} 
where (as in \cite{layer2, homo1},
%homo1 for last eq
for vectors
%layer2 (1.2.3)
$\bfa{N}_1, \bfa{N}_2$) 
%fixed 2002 fixed book cell Allaire (1.13)
%wrong 2002 book cell Allaire (1.11)
%YE b2 Multi-dimensional case (2.47) p20 p26search
$\bfa{K}_m^*=(k^*_{mij})$, $k_m  = k_{mij} = k_{mir}\,,$
\begin{align}\label{eq:main10}
 \begin{split}
k^*_{mij}(p_m) &= \left(\int_Y  k_m(\bfa{y},p_m)\left(\bfa{I} + (\nabla_yN_m(\bfa{y},p_m))^{\tu{T}}\right) \, \dy\right)_{ij}\\
&= \int_Y  \left(k_m(\bfa{y},p_m) (\nabla_yN^j_m(\bfa{y},p_m) + \bfa{e}^j)\right)\cdot \bfa{e}^i \, \dy\\
&= \int_Y   \left(k_{mij} (\bfa{y},p_m) + \left(k_m(\bfa{y},p_m) \nabla_yN^j_m(\bfa{y},p_m)\right)\cdot \bfa{e}^i \right)\, \dy\\
&=\int_Y   \left(k_{mij} (\bfa{y},p_m) + k_{mir}(\bfa{y},p_m){\partial N^j_m(\bfa{y},p_m)\over \partial y_r}\right)\, \dy\\
&=\int_Y   k_{mir} (\bfa{y},p_m)\left( \delta_{rj}+ {\partial N^j_m(\bfa{y},p_m)\over \partial y_r}\right)\, \dy\,,
\end{split}
\end{align}
%function of k1 as N1 is also function of k1
%
\begin{comment}
\begin{align}\label{eq:main10}
 \begin{split}
k^*_{2ij}(p_2) &= \left(\int_Y  k_2(\bfa{y},p_2)\left(\bfa{I}+ (\nabla_yN_2(\bfa{y},p_2))^{\tu{T}}\right) \, \dy\right)_{ij}\\
&= \int_Y  \left(k_2(\bfa{y},p_2) (\nabla_yN^j_2(\bfa{y},p_2) + \bfa{e}^j)\right)\cdot \bfa{e}^i \, \dy\\
&= \int_Y   \left(k_{2ij} (\bfa{y},p_2) + \left(k_2(\bfa{y},p_2) \nabla_yN^j_2(\bfa{y},p_2)\right)\cdot \bfa{e}^i \right)\, \dy\\
&=\int_Y   \left(k_{2ij} (\bfa{y},p_2) + k_{2ik}(\bfa{y},p_2){\partial N^j_2(\bfa{y},p_2)\over \partial y_k}\right)\, \dy\\
&=\int_Y   k_{2ik} (\bfa{y},p_2)\left( \delta_{kj}+ {\partial N^j_2(\bfa{y},p_2)\over \partial y_k}\right)\, \dy\,,
%function of k2 as N2 is also function of k2
\end{split}
\end{align}
\end{comment}
which are positive definite \cite{layer2,papa,rh1,rh2} and symmetric if $\bfa{K}_1,\bfa{K}_2$ are symmetric \cite{layer2}.  For isotropic media in this paper, we have 
\begin{equation}\label{star}
\bfa{K}_m^* = k_m^* \bfa{I}=\kappa_m \bfa{I}\,.
\end{equation}
Recalling \eqref{wx}--\eqref{nox}, we note that $k^*_{mij}(p_m)$ is abbreviated for $k^*_{mij}(\bfa{x},p_m)\,.$
%
%wiki: https://en.wikipedia.org/wiki/Asymptotic_homogenization
%N_1 = (N_1^1,N_1^2)^T = (N_1^j)^T, j rows
%grad of vector fields N_1, N_2 is the Jacobian (wikipedia: https://en.wikipedia.org/wiki/Jacobian_matrix_and_determinant)
%By identifying the unit cell Y with the unit torus, then the cell problem can be seen as posed in the unit torus.  Since the torus has no boundary, it has the advantage of requiring no BCs.  That is, the formulation of the cell problem in the unit torus automatically includes the periodicity of the solution.  See \cite{pcell}: https://books.google.com.vn/books?id=vaTTBwAAQBAJ&pg=PA59&lpg=PA59&dq=homogenization+cell+problem+quotient+space&source=bl&ots=RNpkdFdd8M&sig=ACfU3U1NzAqEwgxfTSIOgDJrahpnR9g8iA&hl=en&sa=X&ved=2ahUKEwjHl56kkLDqAhUP73MBHU6WDy8Q6AEwAHoECAoQAQ#v=onepage&q=homogenization%20cell%20problem%20quotient%20space&f=true 
%\jrp{We can define more coefficients such as $Q^*$ to simplify the equation (\todo{will you do this?})}
Here, $N^i_j (\bfa{y},p_j)$, $M_j (\bfa{y},\bfa{p}) \in H^1_{\#}(Y)/\mathbb{R}$ \cite{pcell}
%If 𝐻 is a Hilbert space and 𝑀=R is a closed subspace of 𝐻, then 𝐻/𝑀 is a Hilbert space: https://math.stackexchange.com/questions/2301450/if-h-is-a-hilbert-space-and-m-is-a-closed-subspace-of-h-then-h-m-is-a-h
are the solutions of the four cell problems:
\begin{subequations}
\label{eq:cell}
\begin{align}
\label{eq:nj}
&\div_y [k_j(\bfa{y},p_j)(\bfa{e}^i + \nabla_y N^i_j(\bfa{y},p_j))] = 0\,,\\
%function of k1
%
\label{eq:mj}
&\div_y [k_j(\bfa{y},p_j)\nabla_y M_j(\bfa{y},\bfa{p})] + Q_j(\bfa{y},\bfa{p}) = 0\,,
%function of k1
%
%\label{eq:n2}
%&\div_y(k_2(\bfa{y},p_2)(\bfa{e}^i + \nabla_y N^i_2(\bfa{y},p_2))) = 0\,,\\
%function of k2
%
%\label{eq:m2}
%&\div_y(k_2(\bfa{y},p_2)\nabla_y M_2(\bfa{y},p_1,p_2)) + Q_2(\bfa{y},p_1,p_2) = 0\,,
%function of k2
\end{align}
\end{subequations} 
as in \cite{rh2}, with periodic boundary conditions, for $p_1,p_2 \in [a,b], \ \bfa{p} = (p_1,p_2) \in [a,b]^2$. 

\begin{remark}\label{intQ}
 Regarding each mass transfer term $Q_i > 0$ (for $i=1,2$), we assume that there is an average of it.  To guarantee the positivity, this average is large.  However, it can be absorbed into the
%i.e., this average can be part of the much larger time
time, that is, in comparison with the time, this average is very small and can vanish when the time becomes very large or infinity.  That is, we assume
\beq
\label{eq:Qaverage}
\int_Y Q_i(\bfa{y},p_1,p_2) \, \dy=0\,.
\eeq
Such chosen $Q_i$ and its average as in \eqref{eq:Qaverage} guarantee the uniqueness of solution in $H^1_{\#}(Y) / \mathbb{R}$ for each of the problems \eqref{eq:mj}.
%and \eqref{eq:m2}.  
Also, it is easy to verify that each of the problems \eqref{eq:nj} has a unique solution in $H^1_{\#}(Y) / \mathbb{R}\,.$  
\end{remark}
%
%\todo{The uniqueness of solution for each of the problems (\ref{eq:cell} (b), (d)) is guaranteed thanks to the assumption that each $\dd \int_Y Q_i(\bfa{y}, p_1, p_2) \, \dy$ is absorbed to the time, as in Remark \ref{intQ}.} 

%
%Also, notation: we should use \todo{$p_1 = p_{10}\,, p_2 = p_{20}$} because we need to take derivatives of $p_1, p_2$ in \eqref{eq:homogenized} and $s$ is often viewed as time in Section \ref{pre}.
%We let $\bfa{s} = (p_1,s_2)$.

\begin{comment}
We also define the space $\bfa{\mathcal{V}}$ as
\begin{equation}\label{dW}
 \bfa{\mathcal{V}} := \bfa{H}_{\#}^1(Y)/(c,c) = H_{\#}^1(Y) \times H_{\#}^1(Y) / (c,c)\,, \quad c \in \mathbb{R}\,. 
%as W = H_{\#}^1(Y)/ \mathbb{R} \cite{rh1, rh2}
\end{equation}
%If there is source term f then require f \in L_{\#}^1(Y)
This space $\bfa{\mathcal{V}}$ is equipped with the norm \cite{rh1}
\[\| (\phi_1,\phi_2)\|_{\bfa{\mathcal{V}}} = \| \nabla_y \phi_1\|_{\bfa{L}^2(Y)} + 
\| \nabla_y \phi_2\|_{\bfa{L}^2(Y)} + 
\|  \phi_1 - \phi_2\|_{L^2(Y)}\,.\]
\end{comment}
%book2 Y, p20, (2.47)

%%%%%Lipschitz%%%%
%\jrp{Richard's Lipschitz Assumption:}

\vspace{15pt}

The following assumption is about Lipschitz condition.

\begin{assumption}
\label{Lipschitz}
There is a positive constant such that for all $p_i$, $p_i'$ in $[a,b]$ and $\bfa{p}=(p_1,p_2)$, $\bfa{p}'=(p'_1,p'_2)$ in $[a,b]^2$, we have (as in \cite{rh1}, for $i=1,2$)
\begin{align}
\label{lipschitz}
\begin{split}
& \|k_i(\bfa{y},p_i) - k_i(\bfa{y},p_i')\|_{L^\infty(Y)}\le C|p_i-p_i'|\,, \quad 
\| \nabla_y (k_i(\bfa{y},p_i) - k_i(\bfa{y},p'_i))\|_{L^\infty(Y)} \le C|p_i-p'_i|\,,\\
& \|Q_i(\bfa{y},\bfa{p}) - Q_i(\bfa{y},\bfa{p}')\|_{L^\infty(Y)} \leq C|\bfa{p}-\bfa{p}'|\,.
\end{split}
\end{align}
\end{assumption}

%%%%%

\begin{remark}
 Crucially, the necessary condition for our proposed strategy to operate is that the above two-scale coefficients own Lipschitz (or H\"{o}lder) smoothness with respect to the macroscopic variable.  This assumption is judicious because the media's macroscopic characteristics generally vary smoothly \cite{rh1}.
\end{remark}

%\subsection{Uniqueness of solution to the cell problem}

We can write the four cell problems (\ref{eq:cell}) in the variational form:  for $i,j=1,2\,,$ find $N^i_j (\cdot,p_j)$, $M_j (\cdot,\bfa{p})$ in $H^1_{\#}(Y)/\mathbb{R}$ such that for any $\phi_j,\psi_j \in H^1_{\#}(Y)\,,$
%checked, use div(fv) = fdiv(v) + v.\grad(f) and ibp
\begin{equation}
\label{eq:main11}
\begin{split}
\int_Y k_j(\bfa{y},p_j) \nabla_y N^i_j(\bfa{y},p_j) \cdot \nabla_y \phi_j(\bfa{y})\, \dy  
&= - \int_Y k_j(\bfa{y},p_j) \bfa{e}^i \cdot \nabla_y \phi_j(\bfa{y})\, \dy \,, \\
\int_Y k_j(\bfa{y},p_j) \nabla_y M_j(\bfa{y},\bfa{p}) \cdot \nabla_y \psi_j(\bfa{y})\, \dy  
&=  \int_Y Q_j(\bfa{y},\bfa{p}) \psi_j(\bfa{y})\, \dy \,.
\end{split}
\end{equation}
 %\todo{(should be $W = H^1_{\#}(Y)/ \mathbb{R}$)},
%p59 above ref google book \cite{pcell}.
%this Sobolev space is also a Hilbert space, see p28 https://www.ljll.math.upmc.fr/frey/cours/UdC/ma691/ma691_ch2.pdf 
%where the notation $\#$ stands for spaces of periodic functions.
%We add above equations in (\ref{eq:main11}) to obtain
%Where $N^i_k$ is the $i^{th}$ component of $N_k$. 
%\begin{equation}
%\label{eq:main12}
%\begin{split}
%\int_Y \kappa_1(x,y) \nabla_y N^i_1(y,s_1) \cdot \nabla_y \phi_1(y)\mathrm{d}y 
%+\int_Y \kappa_2(x,y) \nabla_y N^i_2(y,s_2) \cdot \nabla_y \phi_2(y) \mathrm{d}y\\
%= - \int_Y \kappa_1(x,y) e^i \cdot \nabla_y \phi_1(y)\mathrm{d}y 
%- \int_Y \kappa_2(x,y) e^i \cdot \nabla_y \phi_2(y)\mathrm{d}y\\
%\end{split}
%\end{equation}
%\begin{equation}
%\label{eq:main13}
%\begin{split}
%\int_Y \kappa_1(x,y) \nabla_y M_1(y,s_1,s_2) \cdot \nabla_y \phi_1(y)\mathrm{d}y
%+\int_Y \kappa_2(x,y) \nabla_y M_2(y,s_1,s_2) \cdot \nabla_y \phi_2(y)\mathrm{d}y \\
%= \int_Y Q(y,s_1,s_2) \phi_1(y)\mathrm{d}y
%+ \int_Y Q(y,s_1,s_2) \phi_2(y)\mathrm{d}y
%\end{split}
%\end{equation}

%We denote (\ref{eq:main11}) as $B_1(s_1;N^i_1,\phi_1) = F_1(\phi_1) $,  $B_1(s_1,s_2;M_1,\phi_2) = F_2(\phi_2) $
%, $B_2(s_2; N_2,\phi_3) = F_1(\phi_3) $ and  $B_2(s_1,s_2; M_2,\phi_4) = F_2(\phi_4) $ respectively. 
%Since we assumed $\int_Y (N^i_1 + N^i_2 ) dy = 0$,
 
%For $s \in \mathbb{R}, \bfa{s} = (s_1,s_2) \in \mathbb{R}^2$, we define the bilinear form $B(s;\cdot,\cdot)$, $B(\bfa{s};\cdot,\cdot)$: $W \times W \to \mathbb{R}$ by
%\beq
%\bsp
%B(s;)
%\end{split}
%\eeq
\begin{theorem}
\label{eu}
Each equation of (\ref{eq:main11}) has a unique solution in $\mathcal{V} = H^1_{\#}(Y) / \mathbb{R}\,.$
\end{theorem}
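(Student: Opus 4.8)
The plan is to recognize each of the two identities in \eqref{eq:main11} as an abstract linear variational problem on the Hilbert space $\mathcal{V} = H^1_{\#}(Y)/\mathbb{R}$ and to close it with the Lax--Milgram theorem. Throughout, the pressure values $p_j \in [a,b]$ (resp.\ $\bfa{p} \in [a,b]^2$) are regarded as fixed parameters, so that $k_j(\cdot,p_j)$ and $Q_j(\cdot,\bfa{p})$ are fixed functions on $Y$; by the continuity and $Y$-periodicity hypotheses together with the uniform bounds \eqref{Coercivity}, they lie in $L^\infty_{\#}(Y)$ with $\underline{k}\le k_j(\cdot,p_j)\le \overline{k}$ and $\underline{m}\le Q_j(\cdot,\bfa{p})\le\overline{m}$ a.e.\ on $Y$.

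First I would record that $\|\phi\|_{\mathcal{V}}:=\|\nabla_y\phi\|_{\bfa{L}^2(Y)}$ is genuinely a norm making $\mathcal{V}$ a Hilbert space: on the quotient by constants the Poincar\'e--Wirtinger inequality $\|\phi-\bar\phi\|_{L^2(Y)}\le C_P\|\nabla_y\phi\|_{\bfa{L}^2(Y)}$ holds, so $\|\nabla_y\phi\|_{\bfa{L}^2(Y)}=0$ forces $\phi$ constant, i.e.\ $\phi=0$ in $\mathcal{V}$. Both equations share the bilinear form $a_j(u,\phi)=\int_Y k_j(\bfa{y},p_j)\,\nabla_y u\cdot\nabla_y\phi\,\dy$, which is well defined on $\mathcal{V}$ since it only sees gradients; it is bounded, $|a_j(u,\phi)|\le \overline{k}\,\|u\|_{\mathcal{V}}\|\phi\|_{\mathcal{V}}$, and coercive, $a_j(u,u)\ge \underline{k}\,\|u\|_{\mathcal{V}}^2$, both directly from \eqref{Coercivity}.

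Next I would handle the two right-hand sides. For the $N^i_j$-equation the functional $\ell(\phi)=-\int_Y k_j(\bfa{y},p_j)\,\bfa{e}^i\cdot\nabla_y\phi\,\dy$ depends only on $\nabla_y\phi$, hence descends to $\mathcal{V}$, and $|\ell(\phi)|\le \overline{k}\,|Y|^{1/2}\,\|\phi\|_{\mathcal{V}}$, finite since $|Y|=1$. For the $M_j$-equation the functional $\ell(\psi)=\int_Y Q_j(\bfa{y},\bfa{p})\,\psi\,\dy$ is not obviously defined on the quotient space --- this is the one genuinely delicate point --- but the compatibility condition $\int_Y Q_j(\bfa{y},\bfa{p})\,\dy=0$ from Remark~\ref{intQ} (equation \eqref{eq:Qaverage}) makes $\ell$ vanish on constants, so it descends to $\mathcal{V}$; writing $\psi=\bar\psi+(\psi-\bar\psi)$ gives $\ell(\psi)=\int_Y Q_j(\bfa{y},\bfa{p})(\psi-\bar\psi)\,\dy$, whence $|\ell(\psi)|\le \overline{m}\,C_P\,\|\nabla_y\psi\|_{\bfa{L}^2(Y)}=\overline{m}C_P\|\psi\|_{\mathcal{V}}$ by Cauchy--Schwarz and Poincar\'e--Wirtinger.

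Finally, with a bounded coercive bilinear form and a bounded linear functional on the Hilbert space $\mathcal{V}$, the Lax--Milgram theorem yields a unique $N^i_j(\cdot,p_j)\in\mathcal{V}$ and a unique $M_j(\cdot,\bfa{p})\in\mathcal{V}$ solving the respective equations in \eqref{eq:main11}, which is the assertion. The only real obstacle is the well-posedness of the $M_j$ functional on the quotient space, i.e.\ making explicit use of the zero-average assumption on $Q_j$; the remaining verifications of the Lax--Milgram hypotheses are routine consequences of the uniform bounds \eqref{Coercivity} and the Poincar\'e--Wirtinger inequality.
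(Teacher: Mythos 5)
Your proof is correct and follows essentially the same route as the paper's: Lax--Milgram on the quotient space $\mathcal{V}$, with the zero-average condition \eqref{eq:Qaverage} from Remark \ref{intQ} ensuring the $M_j$ right-hand side is well defined and bounded there. You have simply written out in full the verifications the paper leaves implicit, correctly relying on the uniform bounds \eqref{Coercivity} for coercivity and boundedness (the paper's citation of Assumption \ref{Lipschitz} is not actually needed for this fixed-parameter statement).
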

\begin{proof}
This is a standard result which follows from Remark \ref{intQ}, Lax-Milgram lemma and Assumption \ref{Lipschitz}.
\end{proof}
\iffalse
\begin{proof}
It is easy to show that the operators $B_1, \ B_2$ are bounded with respect to $|||\cdot|||$. More precisely, we have 
\begin{equation}
\label{eq:main19'}
\begin{split}
|B_1(N^i, \phi)|, \ |B_2(N^i, \phi)|  \leq C |||N^i|||\cdot |||\phi|||
\end{split}
\end{equation}
for $\forall N^i,\ \phi \in H^1_{\#}(Y)/\mathbb{R} $. 
Remember we required that $\kappa_1(x,y)$, $\kappa_2(x,y) \geq C \geq 0$. 
Therefore, we have
 \begin{equation}
\label{eq:main22'}
\begin{split}
 |B_1(N^i,N^i)| = \int_Y \kappa_1(x,y) \nabla_y N^i(x,y) \cdot \nabla_y N^i(y)\mathrm{d}y 
  \geq 
 C||\nabla_y N^i||^2_{L^2(Y)}  \\
\end{split}
\end{equation}
Similarly, we obtain $|B_2(N^i,N^i)| \geq C||\nabla_y N^i||^2_{L^2(Y)}$.
And $F_1$, $F_2$ are bounded and continuous because
\begin{equation}
\label{eq:main23}
\begin{split}
%\int_Y \kappa_1(x,y) e^i \cdot \nabla_y \phi_1(y) + \int_Y \kappa_2(x,y) e^i \cdot \nabla_y \phi_2(y) = \int_Y F_1\phi_1(y) dy + \int_Y F_2\phi_2(y) dy
%\\= \int_Y F_1(\frac{\phi_1 + \phi_2}{2} + \frac{\phi_1 - \phi_2}{2}) + \int_Y F_2(\frac{\phi_1 + \phi_2}{2} + \frac{\phi_1 - \phi_2}{2})
%\leq C(||\phi_1 + \phi_2||_{L^2} + ||\phi_1 - \phi_2||_{L^2} )\\
%\leq \Omega|||(\phi_1,\phi_2)|||
\int_Y \kappa_1(x,y) e^i \cdot \nabla_y \phi(y) \mathrm{d}y
 \leq C||\nabla_y \phi||_{L^2(Y)} 
\end{split}
\end{equation}
and assuming $\int_Y M_1 \dy = 0$, thus, $\int_Y \phi \dy = 0$, we have
\begin{equation}
\label{eq:main23'}
\begin{split}
%\int_Y \kappa_1(x,y) e^i \cdot \nabla_y \phi_1(y) + \int_Y \kappa_2(x,y) e^i \cdot \nabla_y \phi_2(y) = \int_Y F_1\phi_1(y) dy + \int_Y F_2\phi_2(y) dy
%\\= \int_Y F_1(\frac{\phi_1 + \phi_2}{2} + \frac{\phi_1 - \phi_2}{2}) + \int_Y F_2(\frac{\phi_1 + \phi_2}{2} + \frac{\phi_1 - \phi_2}{2})
%\leq C(||\phi_1 + \phi_2||_{L^2} + ||\phi_1 - \phi_2||_{L^2} )\\
%\leq \Omega|||(\phi_1,\phi_2)|||
\int_Y Q(y,s_1,s_2) \phi(y) \mathrm{d}y
\leq C||\phi||_{L^2(Y)}  \leq C||\nabla_y \phi||_{L^2(Y)}
 \end{split}
\end{equation}
for some constant $C$. Thus, $|F_1(\phi)|, |F_2(\phi)| \leq C |||\phi|||$.
By (\ref{eq:main19'}), (\ref{eq:main22'}), (\ref{eq:main23}), (\ref{eq:main23'}) and Lax-Milgram theorem, the result follows.
\end{proof}
\fi

%\begin{remark}\label{justify}
 %Two-scale asymptotic expansion \cite{2scale_nl_expand}
%\end{remark}

%%%%%%
\section{Hierarchical numerical solutions of the cell problems}\label{hier}

We now present a development of the hierarchical algorithm introduced in \cite{brown13} and applied in \cite{rh1}. 
%to numerically approximate the solutions of the cell problems (\ref{eq:cell}).
%\iffalse
%{\bf Step1: Build nested finite element spaces.} 
More specifically, we use the Galerkin finite element method (FEM) to approximate the solutions of the cell
%solution w, approximation of w is \bar{w}, and trial \phi \in W, a trial Hilbert space.
problems (\ref{eq:cell}) at each macroscopic point 
%$p_1, p_2 \in [a,b]$ (or $\bfa{p}=(p_1,p_2) \in [a,b]^2$) 
in a hierarchy of macrogrids of such points, with a corresponding nest of FE solution spaces possessing different resolution levels.  The following steps form a general outline of such algorithm.

%%%%%
\textbf{Step 1:  Build nested FE solution spaces.}  Fixing the macropoints $p_j \in [a,b], \bfa{p} = (p_1,p_2) \in [a,b]^2$, $i,j=1,2$, we seek an approximation of each $N^i_j(\bfa{y},p_j), M_j(\bfa{y},\bfa{p})$ ($\in \mathcal{V}$) satisfying \eqref{eq:main11}, via the Galerkin FEM.  In order to do so, for some given positive integer $L\,,$ we construct nested FE solution spaces
\begin{equation}\label{fesps}
\mathcal{V}_{1} \subset \mathcal{V}_{2}\subset\cdots\subset {\mathcal V}_{L} \subset {\mathcal V} = H^1_{\#}(Y) / \mathbb{R}\,,
\end{equation}
and nested trial spaces
\begin{equation}\label{trysps}
\mathcal{W}_{1} \subset {\mathcal W}_{2}\subset\cdots\subset {\mathcal W}_{L} \subset {\mathcal W} = H^1_{\#}(Y)\,,
\end{equation}
where the integer indices $l = 1, 2, \dots, L$ denote the increasing resolution levels.  We choose a regularity space $\mathcal{U}$
%H^2
(contained in $\mathcal{V}$) 
%H^1, but H^1 can have invalid second derivative (not in L^2) so H1 not contained in H^2
for the correct solutions $N_j^i(\bfa{y},p_j), M_j(\bfa{y},\bfa{p})$ (see \cite{reg-fem, brown13, guermond-reg}, for instance).  In our paper, $\mathcal{U} = H^2_\#(Y)$.  We can assume that the generated spaces $\mathcal{V}_k$ ($k = \overline{1,L}$) are Hilbert spaces of functions of $\bfa{y}$ in the cell domain $Y \in \mathbb{R}^2$ (see \cite{brown13}), and that for all $w \in \mathcal{U}$,
\begin{align}
\label{eq:FEapprox}
\begin{split}
\inf_{\phi\in{\mathcal V}_{L-l+1}}\|\nabla_y(w - \phi)\|_{\bfa{L}^2(Y)}
%+2^{L+1-l}\|w-\phi\|_{L^2(Y)}
\le C \varkappa^{l-1} \eta \|w\|_{H^2(Y)}
\le C2^{-L+l-1}\|w\|_{H^2(Y)}\,,
\end{split}
\end{align}
where
%$\eta$ is the mesh size of the finest approximation space $\mathcal{V}_L$ and 
the constant $C$ is independent of $L$ and $l\,,$ while $\varkappa = 2$ is the FE coarsening factor, and $\eta = 2^{-L}$ is the accuracy for the 
%highest accuracy
finest approximation spaces $(\mathcal{V}_L, \mathcal{W}_L)\,.$  

Note that for error estimate, we use the words ``small'' (``fine'') and ``large'' (``coarse'').  Whereas, for accuracy, we benefit the words ``high'' (``fine'') and ``low'' (``coarse'').

%${\mathcal V}_{0} = \varnothing$ (\cite{brown13}, Section 3.2, page 41).
 
With the above established spaces $\mathcal{V}_k\,, \mathcal{W}_k$ and the regularity space $\mathcal{U}$ in $\mathcal{V}$, the errors between the continuous correct solutions $N_j^i(\bfa{y},p_j), M_j(\bfa{y},\bfa{p}) \in \mathcal{U}$ and the corresponding Galerkin FE approximations $\bar{N}^i_j(\bfa{y},p_j), \bar{M}_j(\bfa{y},\bfa{p})$ ($\in \mathcal{V}_{L-l+1}$) decreases (when $L-l+1$ increases) in a structured manner.  
%when l decrease from $L$ to 1
That is, with test functions and $\phi \in \mathcal{W}_{L-l+1}$, we solve \eqref{eq:main11} for $\bar{N}^i_j(\bfa{y},p_j), \bar{M}_j(\bfa{y},\bfa{p})$ in $\mathcal{V}_{L-l+1}$ $\subset \mathcal{V}\,,$ satisfying the error conditions \eqref{eq:FEapprox}:  
%H^2_\#(Y) \subset H^1_\#(Y) containing g in L2 st g' in L2 (while g'' in L2 or g'' \not\in L2 are ok)
%\cite{brown13} Step 1
%
%the decreasing error between the correct sol w and its Galerkin FE approximation \bar{v}  \cite{brown13} (12) p37 where $\bar{v} \in \mathcal{V}_{L-i}$ is different from (13) $\bar{v} \in \mathcal{V}_{L}$
%for $w \in H^2_\#(Y)$
%can be W_2=H^2_\#(Y) / \mathbb{R}
% 
%cea's lemma \cite{reg-fem}, also: https://en.wikipedia.org/wiki/C%C3%A9a%27s_lemma
%with the norm $\|w\|_{W_2} = \|w\|_{H^2(Y)}$,
%mean the solution is smooth enough
%the choice of H^2 is p40 \cite{brown13}
%norm of this quotient is \|\Delta_y w\|_{L^2(Y)}
\begin{align}
\label{eq:FEapprox1}
\begin{split}
||N_j^i(\bfa{y},p_j) - \bar{N}^i_j(\bfa{y},p_j)||_{\mathcal{V}}
&= \inf_{\phi\in{\mathcal V}_{L-l+1}}\|\nabla_y(N_j^i(\bfa{y},p_j) - \phi)\|_{\bfa{L}^2(Y)}
%+2^{L+1-l}\|w-\phi\|_{L^2(Y)}
\le C \varkappa^{l-1} \eta \|N_j^i(\bfa{y},p_j)\|_{H^2(Y)}\\
& \le C2^{-L+l-1}\|N_j^i(\bfa{y},p_j)\|_{H^2(Y)}\,,\\
||M_j(\bfa{y},\bfa{p}) - \bar{M}_j(\bfa{y},\bfa{p})||_{\mathcal{V}} & \le C2^{-L+l-1}\|M_j(\bfa{y},\bfa{p})\|_{H^2(Y)}\,.
\end{split}
\end{align}
%H^2 \subset H^1
The largest (coarsest) error
%= lowest accuracy v.s. smallest error = finest error = highest accuracy
is $\varkappa^{L-1} \eta = 2^{L-1} 2^{-L} = 2^{-1}$ (with $l=L$ in \eqref{eq:FEapprox1} for $\mathcal{V}_{L-l+1}= \mathcal{V}_{1}\,,$ the coarsest FE space).  
%l=L for the coarsening factor\kappa^{l-1 = 2^{L-1}, so the lowest accuracy, i.e. coarsest (largest) error (\kappa^{l-1})

%terminology

%%%%%
\textbf{Step 2:  Build hierarchy of macrogrids.}  With the given positive integer $L$, the following index sets are considered:\\
\beq
\bsp
I_{L} = \{m \in \mathbb{Z} : m = 0,1,2,3,\dots, 2^L \}, \
\bfa{I}_{L} = \{(m_1,m_2) \in \mathbb{Z}^2 : m_1, m_2 = 0,1,2,3,\dots,2^L \}.
\end{split}
\eeq
We define the sets $U_{1,L} \subset U_1 = [a,b],\ \bfa{U}_{2,L} \subset \bfa{U}_2 = [a,b]^2$ by
\begin{align}\label{du12}
 \begin{split}
U_{1,L} &= \left \{u_m = a + \frac{m}{2^L}(b-a), m \in I_{L}\right \} \,,\\
\ \bfa{U}_{2,L} &= \left \{\bfa{u_m} = \left(a+\frac{m_1}{2^L}(b-a),a+\frac{m_2}{2^L}(b-a)\right), \ \bfa{m} = (m_1, m_2) \in \bfa{I}_{L} \right \}\,.
\end{split}
\end{align}

Now, we construct hierarchies of the points in $U_{1,L}$, $\bfa{U}_{2,L}$.  Toward reaching that purpose, for each $l =1,2,\cdots L\,,$ we first define the following set $R_l \subset [a,b]$ (as in \cite{brown13}):
\beq
R_1 = \left \{a, a+\frac{b-a}{2}, b\right \}, \ R_l = \left \{a+\frac{(2r-1)(b-a)}{2^{l}}, r = 1,2,\dots, 2^{l-1}\right\} \text{ for } l \geq 2\,.
\eeq
Then, we define the hierarchy $\mathcal{S}_1^l$ of points in $U_{1,L}$ and the hierarchy $\bfa{\mathcal{S}}_2^l$ of points in $\bfa{U}_{2,L}$ as follows:
\beq\label{dS}
\mathcal{S}_1^l = R_l,\ \bfa{\mathcal{S}}_2^l = \left \{\bfa{u} = (u_1, u_2) : u_1, u_2 \in R_{l}\right \}\,, \text{ } l = \overline{1,L}\,.
\eeq
%
%\beq\label{dS}
%\mathcal{S}_1^l = R_l,\ \bfa{\mathcal{S}}_2^l = \left \{\bfa{p} = (p_1, p_2) : p_i \in R_{l_i}\,, \text{  } \displaystyle\max_{i=1,2} l_i = l\right \}\,, \text{ } l = \overline{1,L}\,.
%\eeq

For example, with $[a,b] = [0,1]$, we have \cite{rh1}:
\[U_{1,1} = \left \{0, \frac{1}{2}, 1\right \}, \ U_{1,2} = \left \{0, \frac{1}{4}, \frac{2}{4}, \frac{3}{4}, 1\right \}, \ U_{1,3} = \left \{0,\frac{1}{8},  \cdots, \frac{8}{8}\right \}, \ U_{1,4} = \left \{0,\frac{1}{16},   \cdots, \frac{16}{16}\right\}\,,\]
and
\[\mathcal{S}_1^1 =  \left\{0, \frac{1}{2}, 1 \right \}, \quad \mathcal{S}_1^2 = \left \{\frac{1}{4}, \frac{3}{4} \right \}, \quad \mathcal{S}_1^3 = \left \{\frac{1}{8}, \frac{3}{8}, \frac{5}{8}, \frac{7}{8}\right \}, \quad \mathcal{S}_1^4 = \left \{\frac{1}{16}, \frac{3}{16}, \frac{5}{16}, \cdots, \frac{15}{16}  \right \}\,.\]
%S_1^4 S 3 = {1/16, 3/16, 5/16,..., 15/16}

We note that $U_{1,1} = R_1=\mathcal{S}_1^1$ and 
\begin{equation}\label{dU}
U_{1,L} = \displaystyle\bigcup_{l=1}^{L} \mathcal{S}_1^l\,, \ \bfa{U}_{2,L} =\displaystyle\bigcup_{l=1}^{L} \bfa{\mathcal{S}}_2^l \,, 
%\text{ for } L>0
\end{equation}
with $U_{1,1}\subset U_{1,2}\subset\cdots\subset U_{1,L} \subset [a,b]$, and $\bfa{U}_{2,1}\subset \bfa{U}_{2,2}\subset\cdots\subset \bfa{U}_{2,L} \subset [a,b]^2$.  

%\todo{Here, $\mathcal{S}_1^L= \mathcal{S}_2^L = \varnothing$ due to reindexing (\cite{brown13}, page 42).} 

In this way, given $\mathcal{S}_1^1$ as the set of anchor points, we build a dense hierarchy of the macro-grid points as in \cite{brown13}. That is, 
for each point $p_i \in \mathcal{S}_1^l$ or $\bfa{p}=(p_1,p_2) \in \bfa{\mathcal{S}}_2^l$ (where $l \geq 2$), there exists at least one point from one of the previous levels, namely,
 $p_i' \in \bigcup_{k<l} \mathcal{S}_1^k$ or  $\bfa{p}' \in \bigcup_{k<l} \bfa{\mathcal{S}}_2^k$ such that dist($p_i$, $p_i'$) $<$ $\mathcal{O}(H 2^{-l})$ or dist($\bfa{p}$, $\bfa{p}'$) $<$ $\mathcal{O}(H 2^{-l})\,,$ for $i=1,2\,.$ 
 % p37 donald, C \kappa^i, h.  
 %\kappa: the FE coarsening factor
 %h: the error in the finest approximation spaces (\mathcal{V}_L, \mathcal{W}_L)
 %U= H^2: the regularity space for the sol, standard in FEM error expression (\cite{reg-fem}, Scott, The mathematical theory of FEMs)
 %Our paper, \kappa = 2, i = l-1, h = 2^{-L} (p41 Donald)
% Figures \ref{fig:macrogrid} and \ref{fig:hiergrid} show an example of 3-level hierarchy of macrogrids
 %$\macrogrid{l}$, $\hiergrid{l}$, $l = 1,2,3$, constructed in $\Omega = [0, 1]^2$.\\
 
 %%%%%
\textbf{Step 3:  Calculating the correction term.}  \textit{Now, we effectively connect the nested FE spaces (of solutions) with the hierarchy of macrogrids (of points) in a computational approach}.  As in \cite{rh1}, for anchor points $p_i \in \mathcal{S}_1^1, \bfa{p} = (p_1,p_2) \in \bfa{\mathcal{S}}_2^1$ (these $\mathcal{S}_1^1\,, \bfa{\mathcal{S}}_2^1$ are the most sparse macrogrids), by the standard Galerkin FEM, we find the approximations $\bar{N}^i_j(\cdot,p_j)$, $\bar{M}_j(\cdot,\bfa{p}) \in {\mathcal V}_L$ (the finest FE space with highest accuracy) 
%db 
satisfying \eqref{eq:main11}:
\begin{equation}
\label{eq:main11db}
\begin{split}
\int_Y k_j(\bfa{y},p_j) \nabla_y \bar{N}^i_j(\bfa{y},p_j) \cdot \nabla_y \phi_j(\bfa{y})\, \dy  
&= - \int_Y k_j(\bfa{y},p_j) \bfa{e}^i \cdot \nabla_y \phi_j(\bfa{y})\, \dy \,, \\
\int_Y k_j(\bfa{y},p_j) \nabla_y \bar{M}_j(\bfa{y},\bfa{p}) \cdot \nabla_y \psi_j(\bfa{y})\, \dy  
&=  \int_Y Q_j(\bfa{y},\bfa{p}) \psi_j(\bfa{y})\, \dy \,, 
%\int_Y k_2(\bfa{y},p_2) \nabla_y \bar{N}^i_2(\bfa{y},p_2) \cdot \nabla_y \phi_3(\bfa{y}) \, \dy 
% = - \int_Y k_2(\bfa{y},p_2) \bfa{e}^i \cdot \nabla_y \phi_3(\bfa{y})\, \dy \,, \\
 %
 %\int_Y k_2(\bfa{y},p_2) \nabla_y \bar{M}_2(\bfa{y},p_1,p_2) \cdot \nabla_y \phi_4(\bfa{y})\, \dy  
%=  \int_Y Q_2(\bfa{y},p_1,p_2) \phi_4(\bfa{y})\, \dy\,,
\end{split}
\end{equation}
where $\phi_j,\psi_j \in \mathcal{W}_{L}$ (the finest FE space) and $i,j=1,2\,.$ 

Proceeding inductively, for $l \geq 2$, $p_i \in \mathcal{S}_1^l, \bfa{p} = (p_1,p_2) \in \bfa{\mathcal{S}}_2^l$, we choose the points  $p_i' \in (\bigcup_{l'<l} \mathcal{S}_1^{l'})$, $\bfa{p}' \in (\bigcup_{l'<l} {\bfa{\mathcal{S}}}_2^{l'})$ so that dist($p_i$, $p_i'$) and dist($\bfa{p}$, $\bfa{p}'$) are %less than  
$\mathcal{O}(H2^{-l})$, where $l' \geq 1$ and $l',l \leq L\,.$  This is possible since we constructed the hierarchy
of grids (of macroscopic points) in a dense way (at the end of Step 2). We solve the following problems: find the correction terms (that are actually the terms to be corrected) ${\bar{N^{i}_j}}^c(\bfa{y},p_j)$, ${\bar{M_j^c}}(\bfa{y},\bfa{p})$ in ${\mathcal V}_{L+1-l}$ such that
% with coarser accuracy in comparison with those in the above $\mathcal{V}_L$ and $\mathcal{V}_{L-l'+1}$},
%
%define
%dbc
\begin{equation*}
\label{eq:main52dbc} 
\begin{split}
&\int_Y k_j(\bfa{y},p_j) \nabla_y {\bar{N^i_j}}^c (\bfa{y},p_j) \cdot  \nabla_y \phi_j(\bfa{y}) \, \dy\\ &= 
-\int_Y (k_j(\bfa{y},p_j) - k_j(\bfa{y},p_j'))\nabla_y \bar{N}^{i}_j (\bfa{y},p_j') \cdot \nabla_y \phi_j(\bfa{y}) \, \dy\\
& \quad -\int_Y (k_j(\bfa{y},p_j) - k_j(\bfa{y},p_j')) \bfa{e}^i \cdot \nabla_y \phi_j(\bfa{y}) \, \dy\,,
\end{split}
\end{equation*}
\begin{equation}
\label{eq:main52'dbc} 
\begin{split}
\int_Y &k_j(\bfa{y},p_j) \nabla_y {\bar{M_j^c}}(\bfa{y},\bfa{p}) \cdot  \nabla_y \psi_j(\bfa{y}) \, \dy\\ = & 
-\int_Y (k_j(\bfa{y},p_j) - k_j(\bfa{y},p'_j))\nabla_y \bar{M}_j (\bfa{y},\bfa{p}') \cdot \nabla_y \psi_j(\bfa{y}) \, \dy \\
& -\int_Y (Q_j (\bfa{y},\bfa{p})-Q_j (\bfa{y},\bfa{p}'))\psi_j(\bfa{y}) \, \dy\,,
\end{split}
\end{equation}
for all $\phi_j,\psi_{j} \in \mathcal{W}_{L+1-l}\,.$  Note that the right-hand side data of \eqref{eq:main52'dbc} is all known because we have already found inductively the solutions $\bar{N^i_j}(\bfa{y},p_j')$, $\bar{M_j}(\bfa{y},\bfa{p}')$ (the 
%\todo{$l'$th} 
hierarchical macro-grid interpolations of Galerkin FE approximations for $N^i_j(\bfa{y},p_j')$, $M_j(\bfa{y},\bfa{p}')$ at \eqref{eq:main11} in $\mathcal{V}$) 
%page 42 \brown13, second line before (21)
%?space
from (\ref{eq:main11db}) in finer FE spaces $\mathcal{V}_{L-l'+1}$ ($\supset \mathcal{V}_{L-l+1}$) at macro-grid points $p_j' \in \bigcup_{l'<l} \mathcal{S}_1^{l'}$ and $\bfa{p}' \in \bigcup_{l'<l} \bfa{\mathcal{S}}_2^{l'}$.
%We solve for the correction terms in the FE spaces $\mathcal{V}_{L-l}$ with lower accuracy. 
%And finally, 
Using both the correction terms $\bar{N^i_j}^{c}(\cdot,p_j)\,, \bar{M_j^c}(\cdot, \bfa{p})$ in $\mathcal{V}_{L-l+1}$ (the coarser FE spaces with lower accuracy)
%, that is, coarser accuracy) 
and the macro-grid interpolation terms $\bar{N^i_k}(\cdot,p_j')\,, \bar{M_j}(\cdot,\bfa{p}')$ in $\mathcal{V}_{L-l'+1}$ (the finer FE spaces with higher accuracy),
% that is, finer accuracy), 
we let
%db
%checked 1-point interpolation \cite{park}
\begin{equation}
\label{eq:main25''}
%db
\bar{N^i_j}(\cdot,p_j)=\bar{N^i_j}^{c}(\cdot, p_j)+\bar{N^i_j}(\cdot,p_j')\,, 
\quad
\bar{M_j}(\cdot,\bfa{p})=\bar{M_j^c}(\cdot, \bfa{p})+\bar{M_j}(\cdot,\bfa{p}')
%correction term (c) and macro-grid interpolation term '
\end{equation}
(in $\mathcal{V}_{L-l+1}$) be FE approximations for $N^i_j(\cdot,p_j)$, $M_j(\cdot,\bfa{p})$ (in $\mathcal{V}$) respectively.
%Induction scheme, l=1, find \bar{N}(u') by cell probs main11 using standard Galerkin FEM.
%Suppose found \bar{N}(u') for all l'<l
%Want \bar{N}(u) for l level

Note that one can easily verify that $\bar{N^i_j}(\cdot,p_j), \
\bar{M_j}(\cdot,\bfa{p})$ in \eqref{eq:main25''} satisfy \eqref{eq:main11db} as ${\bar{N^i_j}}^c (\cdot,p_j), \
{\bar{M_j^c}} (\cdot,\bfa{p})$ are from \eqref{eq:main52'dbc} and $\bar{N^i_j}(\cdot,p_j'), \
\bar{M_j}(\cdot,\bfa{p}')$ are from \eqref{eq:main11db}.  See \ref{contproof}, for a direction of proof.

%%%
\begin{remark}
 One can interchange Steps 1 and 2 in order.  The reason lies in the relationship between the refining of the hierarchical macrogrids (of points) and the error coarsening factor $\varkappa$ of the nested FE spaces.  That is, for a sparser macrogrid of points in the hierarchy, we use some finer resolution FE solve.  Vice versa, the nearer the macro-grid points are, the coarser the FE space gets.   
\end{remark}

%%%
\begin{remark}
 It is as expected that when the hierarchical level $l$ is higher, the corresponding FE space becomes coarser.  That is, for local problems, if one wanted to increase the coarseness of the FE space (to get smaller degrees of freedom  towards reduced computational cost), the number of hierarchical macro-grid points would increase, while the FE error still keeps optimal.  This is the spirit of our strategy. 
\end{remark}

%%%
In Appendix \ref{proofthm}, we will prove (Theorem \ref{maintheorem} below) that the approximation \eqref{eq:main25''} in $\mathcal{V}_{L-l+1}$ for each continuous exact solution $N^i_j(\cdot,p_j)$, $M_j(\cdot,\bfa{p})$ from \eqref{eq:main11} (in $\mathcal{V}$) is of the same order of accuracy as when we solve \eqref{eq:main11db} normally in the finest FE space $\mathcal{V}_L$ (see \eqref{eq:FEapprox1} for $i,j=1,2$):
\begin{align}
\label{eq:FEapprox2}
\begin{split}
||N_j^i(\bfa{y},p_j) - \bar{N}^i_j(\bfa{y},p_j)||_{\mathcal{V}}
& \le C2^{-L}\|N_j^i(\bfa{y},p_j)\|_{H^2(Y)}\leq C2^{-L}\|\Delta N_j^i(\bfa{y},p_j)\|_{L^2(Y)}\\
&\leq C2^{-L}\,,\\
||M_j(\bfa{y},\bfa{p}) - \bar{M}_j(\bfa{y},\bfa{p})||_{\mathcal{V}} &\leq C2^{-L}\,.
\end{split}
\end{align}
%using Lemma \ref{lemma2} (for $-l+1 = 0$), with a reduced computational expensiveness.

\bigskip

As in \cite{brown13}, the cell problems \eqref{eq:cell} give us the cell operator.  With $i,j=1,2$, we assume that $\bar{N^{i}_j}(\bfa{y},p_j), \bar{M_j}(\bfa{y},\bfa{p}) \in \mathcal{V}_{L-l+1}$ have been constructed as above (see \eqref{eq:main11db} and \eqref{eq:main25''}) for $p_j \in \mathcal{S}_1^l$ and $\bfa{p}=(p_1,p_2) \in \bfa{\mathcal{S}}_2^l$, respectively.  Then, we have the following convergence results for the hierarchical solve, with level $l = \overline{1,L}\,.$ 
%\subsection{Error estimates}
%To illustrate the main ideas of the proof of the error estimate, we proceed to build a nested collection of FE spaces and a corresponding hierarchy of macrogrids
%for $\Omega = [0, 1]^2$. 
%Note again that, for what follows, we have required that the coefficients $k_i$, $Q$ always satisfy Assumption \ref{Lipschitz},
\begin{theorem}
\label{maintheorem}
%Let the assumptions of Lemma \ref{theorem4} hold. Moreover, 
For a sufficient large constant $C_{1*},\ C_{2*}$, which depend only on the coefficients $k_1,k_2$ (so, on the cell operator of \eqref{eq:cell}), 
%cell operator p46 \cite{brown13}
we have the error estimates
\begin{equation}
\label{eq:main55'**} 
\begin{split}
&||\nabla_yN^i_j(\bfa{y},p_j)-\nabla_y\bar{N^{i}_j}(\bfa{y},p_j)||_{\bfa{L}^2(Y)} 
\leq C_{1*} l 2^{-L}\,,\\
&||\nabla_y M_j(\bfa{y},\bfa{p})-\nabla_y\bar{M_j}(\bfa{y},\bfa{p})||_{\bfa{L}^2(Y)} \leq C_{2*} l 2^{-L}\,.
\end{split}
\end{equation}
\end{theorem}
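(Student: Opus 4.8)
The plan is to prove the two estimates in \eqref{eq:main55'**} by induction on the hierarchical level $l=1,2,\dots,L$, using the uniform ellipticity bounds \eqref{Coercivity}, the Lipschitz Assumption~\ref{Lipschitz}, the finite element approximation property \eqref{eq:FEapprox1} on the nested spaces $\mathcal{V}_{L-l+1}$, and elliptic regularity with the regularity space $\mathcal{U}=H^2_\#(Y)$. I would carry out the argument in detail for $\bar{M}_j(\bfa{y},\bfa{p})$; the one for $\bar{N}^i_j(\bfa{y},p_j)$ is identical and in fact simpler, its correction problem carrying no zeroth-order data. For the base case $l=1$, the anchor macropoints $p_j\in\mathcal{S}_1^1$, $\bfa{p}\in\bfa{\mathcal{S}}_2^1$ are handled by solving \eqref{eq:main11db} directly in the finest space $\mathcal{V}_L$, so Galerkin orthogonality, \eqref{eq:FEapprox1} with $\mathcal{V}_{L-l+1}=\mathcal{V}_L$, and the bound $\|M_j(\cdot,\bfa{p})\|_{H^2(Y)}\le C\|\Delta_y M_j(\cdot,\bfa{p})\|_{L^2(Y)}\le C$ on the quotient space give $\|\nabla_y M_j(\bfa{y},\bfa{p})-\nabla_y\bar{M}_j(\bfa{y},\bfa{p})\|_{\bfa{L}^2(Y)}\le C2^{-L}$, which is \eqref{eq:main55'**} for $l=1$ as soon as $C_{2*}\ge C$.

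For the inductive step, fix $l\ge2$ and $\bfa{p}\in\bfa{\mathcal{S}}_2^l$, and use the density of the hierarchy from Step~2 to choose a parent $\bfa{p}'\in\bigcup_{l'<l}\bfa{\mathcal{S}}_2^{l'}$ at some level $l'<l$ with $\mathrm{dist}(\bfa{p},\bfa{p}')=\mathcal{O}(2^{-l})$. Using \eqref{eq:main25''} I would split
\[ M_j(\cdot,\bfa{p})-\bar{M}_j(\cdot,\bfa{p})=\big[\,w-\bar{M}_j^c(\cdot,\bfa{p})\,\big]+\big[\,M_j(\cdot,\bfa{p}')-\bar{M}_j(\cdot,\bfa{p}')\,\big],\qquad w:=M_j(\cdot,\bfa{p})-M_j(\cdot,\bfa{p}'). \]
The second bracket is the parent error, bounded by $C_{2*}l'2^{-L}$ by the induction hypothesis. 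Subtracting the two instances of \eqref{eq:main11} shows that $w$ solves an elliptic cell-type problem whose data is the coefficient increment $k_j(\cdot,p_j)-k_j(\cdot,p_j')$ acting on $\nabla_y M_j(\cdot,\bfa{p}')$ together with the source increment $Q_j(\cdot,\bfa{p})-Q_j(\cdot,\bfa{p}')$, both of size $\mathcal{O}(|\bfa{p}-\bfa{p}'|)=\mathcal{O}(2^{-l})$ by Assumption~\ref{Lipschitz}; moreover $\bar{M}_j^c(\cdot,\bfa{p})$ is, by \eqref{eq:main52'dbc}, the Galerkin solution in $\mathcal{V}_{L-l+1}$ of the same problem with $\nabla_y M_j(\cdot,\bfa{p}')$ replaced by its computed approximation $\nabla_y\bar{M}_j(\cdot,\bfa{p}')$ (and, using the nesting $\mathcal{W}_{L-l+1}\subset\mathcal{W}_{L-l'+1}$, adding \eqref{eq:main52'dbc} to the equation for $\bar{M}_j(\cdot,\bfa{p}')$ restricted to $\mathcal{W}_{L-l+1}$ verifies the claim made after \eqref{eq:main25''} that $\bar{M}_j(\cdot,\bfa{p})$ solves the full discrete problem \eqref{eq:main11db}). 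Inserting the Galerkin projection $\Pi w\in\mathcal{V}_{L-l+1}$ of $w$, the estimate \eqref{eq:FEapprox1} with the $H^2$-bound $\|w\|_{H^2(Y)}\le C|\bfa{p}-\bfa{p}'|$ gives $\|\nabla_y(w-\Pi w)\|_{\bfa{L}^2(Y)}\le C2^{-L+l-1}|\bfa{p}-\bfa{p}'|\le C2^{-L}$, while coercivity \eqref{Coercivity} and Assumption~\ref{Lipschitz} applied to the difference of the two Galerkin problems give $\|\nabla_y(\Pi w-\bar{M}_j^c(\cdot,\bfa{p}))\|_{\bfa{L}^2(Y)}\le(C/\underline{k})|\bfa{p}-\bfa{p}'|\,\|\nabla_y(M_j(\cdot,\bfa{p}')-\bar{M}_j(\cdot,\bfa{p}'))\|_{\bfa{L}^2(Y)}\le C2^{-l}C_{2*}l'2^{-L}$. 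Collecting the three pieces, $\|\nabla_y(M_j(\cdot,\bfa{p})-\bar{M}_j(\cdot,\bfa{p}))\|_{\bfa{L}^2(Y)}\le C2^{-L}+(1+C2^{-l})\,C_{2*}l'2^{-L}$; since $l'\le l-1$, since the level-dependent amplification $1+C2^{-l}$ (rather than a fixed constant $>1$) multiplies to a factor bounded uniformly in $l$ when unrolled along a chain of at most $l$ ancestors, and since the $C2^{-L}$ contributions then accumulate to at most $l\,C2^{-L}$, choosing $C_{2*}$ large enough in terms of $k_1,k_2$ (that is, in terms of the cell operator of \eqref{eq:cell}) closes the induction with the linear factor $l$.

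The crux — and the step I expect to be the main obstacle — is the \emph{balance} between macro-grid refinement and finite element coarsening: the level-$l$ correction is computed in the \emph{coarser} space $\mathcal{V}_{L-l+1}$, whose approximation constant is inflated by $\varkappa^{l-1}=2^{l-1}$, and this is cancelled exactly by the $\mathcal{O}(2^{-l})$ size of the correction data, which is forced by the density of the hierarchy and the Lipschitz continuity of the coefficients; thus every level contributes only $\mathcal{O}(2^{-L})$ and the amplification per level is $1+\mathcal{O}(2^{-l})$, so a chain of at most $l$ levels yields $\mathcal{O}(l2^{-L})$. Making this rigorous requires two technical points to be handled carefully. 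First, the regularity estimate $\|w\|_{H^2(Y)}\lesssim|\bfa{p}-\bfa{p}'|$ for the difference of two cell solutions, which needs both the function- and gradient-Lipschitz bounds in Assumption~\ref{Lipschitz} and the regularity theory for periodic second-order elliptic equations with the chosen space $\mathcal{U}=H^2_\#(Y)$; routing the estimate through the Galerkin projection $\Pi w$ of the \emph{exact} difference $w$, rather than through strong-form regularity of the finite element function $\bar{M}_j^c$, is what avoids the spurious inter-element contributions of $\Delta_y$ of a conforming finite element function. Second, one must confirm that the accumulated approximation $\bar{M}_j(\cdot,\bfa{p})$ of \eqref{eq:main25''} genuinely satisfies the discrete problem \eqref{eq:main11db}, so that the error splitting above is legitimate; as noted, this follows by adding \eqref{eq:main52'dbc} to the equation satisfied by the parent approximation $\bar{M}_j(\cdot,\bfa{p}')$ restricted to the smaller test space $\mathcal{W}_{L-l+1}$, and similarly for $\bar{N}^i_j$.
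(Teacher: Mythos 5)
Your proposal is correct and follows essentially the same route as the paper's own proof: the same decomposition of the error into the parent error plus the correction error, the same intermediate Galerkin object (your Ritz projection $\Pi w$ of the exact correction is precisely the paper's auxiliary solution ${\Bar{\Bar{M_j^c}}}$ of the correction problem with exact data), the same $H^2$-Lipschitz bound on the exact correction obtained via elliptic regularity, and the same unrolling of the recursion $C_{*,l}\le C+(1+C2^{-l})C_{*,l-1}$ using the summability of $2^{-l}l$ to get the linear factor $l$. No gaps; the two technical points you flag (the $H^2$ bound on $w$ and the verification that the accumulated approximant satisfies the discrete problem) are exactly the ones the paper addresses in its Lemmas on the correction terms and in the remark following its definition of the hierarchical approximation.
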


%\todo{Why $l+1$ here, while $l$ in Appendix \ref{proofthm}?  It should be $l+1$ as in \cite{brown13}.}
\begin{proof}
The proof can be found in Appendix \ref{proofthm}.
\end{proof}

For the degrees of freedom needed to solve the cell problems \eqref{eq:cell}, we have the following theorems.

\begin{theorem}
\label{theoremdegree1}
The total degrees of freedom required to solve \eqref{eq:nj} %and \eqref{eq:n2} 
with all points in $\mathcal{S}_1^1, \mathcal{S}_1^2, \dots, \mathcal{S}_1^L $ is $\mathcal{O}(L2^{L+1})$ %($l=1,2,\dots,L$) 
using the hierarchical solve, while it is $\mathcal{O} ((2^{L})^2)$ in the full solve
%full fine mesh/resolution/reference solve
%or full mesh reference
(where the finest mesh is used for all cell problems at all macro-grid points). 
\end{theorem}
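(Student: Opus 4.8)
The plan is a direct counting argument that combines the two ingredients already assembled in Step~1 and Step~2 of Appendix~\ref{hier}: the number of degrees of freedom of the finite element space used at each hierarchical level, and the cardinality of each macrogrid $\mathcal{S}_1^l$. No analytic estimate beyond \eqref{eq:FEapprox} is needed.

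First I would record the per-level FE cost. By the nested construction \eqref{fesps} with coarsening factor $\varkappa = 2$ and the accuracy relation \eqref{eq:FEapprox} (finest accuracy $\eta = 2^{-L}$), the space $\mathcal{V}_m$ — equivalently $\mathcal{W}_m$ — at resolution level $m$ carries $\mathcal{O}(2^{m})$ degrees of freedom. Hence solving one instance of the cell problem \eqref{eq:nj} at a fixed macro-point, be it the direct problem \eqref{eq:main11db} at level $l=1$ or a correction problem of the form \eqref{eq:main52'dbc} at level $l \ge 2$, is carried out in $\mathcal{V}_{L-l+1}$ and costs $\mathcal{O}(2^{L-l+1})$ degrees of freedom; the finitely many index pairs $i,j$ only affect the constant.

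Second I would count macro-points. From \eqref{dS} and the definition of $R_l$ one has $|\mathcal{S}_1^1| = 3$ and $|\mathcal{S}_1^l| = 2^{l-1}$ for $l \ge 2$, and by \eqref{dU} these levels together exhaust $U_{1,L}$, which contains $2^L + 1$ points. Multiplying cost-per-point by number-of-points and summing, the hierarchical solve over $\mathcal{S}_1^1,\dots,\mathcal{S}_1^L$ costs
\begin{align*}
3 \cdot \mathcal{O}(2^{L}) + \sum_{l=2}^{L} |\mathcal{S}_1^l|\,\mathcal{O}(2^{L-l+1})
&= \mathcal{O}(2^{L}) + \sum_{l=2}^{L} 2^{l-1}\,\mathcal{O}(2^{L-l+1}) \\
&= \sum_{l=2}^{L}\mathcal{O}(2^{L}) = \mathcal{O}(L\,2^{L+1}),
\end{align*}
because each summand $2^{l-1}\cdot 2^{L-l+1} = 2^{L}$ is level-independent and there are $\mathcal{O}(L)$ levels. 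For the full solve, every one of the $2^L+1$ points of $U_{1,L}$ is treated in the finest space $\mathcal{V}_L$ with $\mathcal{O}(2^L)$ degrees of freedom, giving $\mathcal{O}\big((2^L+1)\,2^L\big) = \mathcal{O}\big((2^L)^2\big)$.

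The only delicate points are bookkeeping ones rather than genuine obstacles: remembering that $|\mathcal{S}_1^1| = 3$ departs from the pattern $2^{l-1}$ (harmless, since its contribution $\mathcal{O}(2^L)$ is dominated by the sum), and being precise that the quantity in the statement is the \emph{total} over all cell problems solved at all macro-points, with each correction problem posed in the coarser space $\mathcal{V}_{L-l+1}$ rather than in $\mathcal{V}_L$. This exact balance — the FE space shrinks by a factor $2$ at each level while the number of new macro-points grows by a factor $2$ — is what turns the $\mathcal{O}((2^L)^2)$ of the full solve into the $\mathcal{O}(L\,2^{L+1})$ of the hierarchical solve.
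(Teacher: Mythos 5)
Your proof is correct and follows essentially the same counting argument as the paper: $\mathcal{O}(2^{l})$ macro-points at level $l$, each solved in $\mathcal{V}_{L-l+1}$ with $\mathcal{O}(2^{L-l+1})$ degrees of freedom, so each level contributes $\mathcal{O}(2^{L+1})$ and the $L$ levels sum to $\mathcal{O}(L2^{L+1})$, versus $\mathcal{O}((2^{L})^{2})$ for the full solve. The extra care you take with $|\mathcal{S}_1^1|=3$ versus $|\mathcal{S}_1^l|=2^{l-1}$ is harmless bookkeeping that the paper absorbs into the $\mathcal{O}$-notation.
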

\begin{proof}
Note that the number of macroscopic points in $\mathcal{S}_1^l$ is $\mathcal{O}(2^{l})$. The space $\mathcal{V}_{L+1-l}$ is of dimension $\mathcal{O}(2^{L+1-l})$. 
Thus, the total degrees of freedom for solving \eqref{eq:nj} 
%and \eqref{eq:n2} 
with all points in $\mathcal{S}_1^l$ is $\mathcal{O}(2^{l})\mathcal{O}(2^{L+1-l}) = \mathcal{O}(2^{L+1})$. 
Therefore, the total degrees of freedom required to solve \eqref{eq:nj} 
%and \eqref{eq:n2} 
for all points in $L$ macrogrids $\mathcal{S}_1^1, \mathcal{S}_1^2, \dots, \mathcal{S}_1^L $ is $\mathcal{O}(L 2^{L+1})$, while it is $\mathcal{O} ((2^{L})^2)$ in the full fine mesh solve (where the cell problems are solved with the finest mesh level $\mathcal{V}_L$
%dimension $\mathcal{O}(2^{L})$
(\ref{fesps}) at all macro-grid points in $U_{1,L}=\displaystyle\bigcup_{l=1}^{L} \mathcal{S}_1^l$ (\ref{dU})).
%dof $\mathcal{O}(2^{L})$.
\end{proof}

\begin{theorem}
\label{theoremdegree2}
The total degrees of freedom required to solve \eqref{eq:mj} %and \eqref{eq:m2} 
with all points in $\bfa{\mathcal{S}}_2^1, \bfa{\mathcal{S}}_2^2, \dots, \bfa{\mathcal{S}}_2^L $ 
%($l=1,2,\dots,L$) 
is $\mathcal{O}(L2^{2(L+1)})$
using the hierarchical solve, while it is $\mathcal{O}((2^{2L})^2)$ in the full fine mesh solve. 
\end{theorem}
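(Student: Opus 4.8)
The plan is to reproduce the counting argument of Theorem \ref{theoremdegree1}, the only structural change being that the macroscopic parameter entering \eqref{eq:mj} now lives in the two-dimensional set $[a,b]^2$, so that each hierarchical level carries quadratically more macro-grid points. First I would record the relevant cardinalities: by the construction \eqref{dS} the level-$l$ macrogrid $\bfa{\mathcal{S}}_2^l$ is the product grid $\{(u_1,u_2) : u_1,u_2 \in R_l\}$, and since $|R_l| = \mathcal{O}(2^l)$ this gives $|\bfa{\mathcal{S}}_2^l| = \mathcal{O}(2^{2l})$, in contrast with the $\mathcal{O}(2^l)$ points of $\mathcal{S}_1^l$ used in Theorem \ref{theoremdegree1}.

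Next I would fix the dimension of the finite element space employed at level $l$. In the hierarchical solve each cell problem \eqref{eq:mj} at a macro-point $\bfa{p} \in \bfa{\mathcal{S}}_2^l$ is solved in the space $\mathcal{V}_{L+1-l}$ from the nest \eqref{fesps} (as a correction added to a previously computed coarse solution); since the cell domain $Y \subset \mathbb{R}^2$ is two-dimensional and the levels are generated by successive uniform refinements with coarsening factor $\varkappa = 2$ as in \eqref{eq:FEapprox}, the dimension of $\mathcal{V}_{L+1-l}$ is $\mathcal{O}(2^{2(L+1-l)})$. Multiplying the number of macro-points at level $l$ by this per-point cost yields
\[
\mathcal{O}(2^{2l}) \cdot \mathcal{O}(2^{2(L+1-l)}) = \mathcal{O}(2^{2(L+1)})
\]
degrees of freedom at level $l$. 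The crucial observation is that this is independent of $l$, so summing over the $L$ levels produces only an extra linear factor: the hierarchical solve over $\bfa{\mathcal{S}}_2^1, \dots, \bfa{\mathcal{S}}_2^L$ costs $\sum_{l=1}^{L} \mathcal{O}(2^{2(L+1)}) = \mathcal{O}(L 2^{2(L+1)})$ degrees of freedom.

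For the full solve, every macro-point of $\bfa{U}_{2,L} = \bigcup_{l=1}^{L} \bfa{\mathcal{S}}_2^l$ (see \eqref{dU}), which numbers $\mathcal{O}(2^{2L})$, is treated in the finest space $\mathcal{V}_L$ of dimension $\mathcal{O}(2^{2L})$, giving $\mathcal{O}(2^{2L}) \cdot \mathcal{O}(2^{2L}) = \mathcal{O}((2^{2L})^2)$. The argument is elementary; the only step that demands care is keeping the bookkeeping for the $\mathbb{R}^2$-valued macroscopic parameter consistent with the two-dimensional cell mesh, so that the level-wise product of the point count and the finite-element dimension collapses to a quantity independent of $l$ and the geometric sum over $l$ contributes the factor $L$ rather than another power of $2^L$.
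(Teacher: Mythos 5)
Your proposal is correct and follows essentially the same counting argument as the paper: $\mathcal{O}(2^{2l})$ macro-points at level $l$ times the $\mathcal{O}(2^{2(L+1-l)})$-dimensional space $\mathcal{V}_{L+1-l}$ gives a level-independent $\mathcal{O}(2^{2(L+1)})$, summed over $L$ levels, versus $\mathcal{O}(2^{2L})\cdot\mathcal{O}(2^{2L})$ for the full solve. No gaps.
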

\begin{proof}
Note that the number of macroscopic points in $\bfa{\mathcal{S}}_2^l$ is $\mathcal{O}(2^{2l})$. The space $\mathcal{V}_{L+1-l}$ is of dimension $\mathcal{O}(2^{2(L+1-l)})$. 
Thus, the total degrees of freedom for solving \eqref{eq:mj} 
%and \eqref{eq:m2} 
with all points in $\bfa{\mathcal{S}}_2^l$ is $\mathcal{O}(2^{2l})\mathcal{O}(2^{2(L+1-l)}) = \mathcal{O}(2^{2(L+1)})$. 
Therefore, the total degrees of freedom required to solve \eqref{eq:mj} 
%and \eqref{eq:m2} 
for all points in $\bfa{\mathcal{S}}_2^1, \bfa{\mathcal{S}}_2^2, \dots, \bfa{\mathcal{S}}_2^L $ is $\mathcal{O}(L 2^{2(L+1)})$, while it is $\mathcal{O} ((2^{2L})^2)$ in the full fine mesh solve (where we solve the cell problems using the finest mesh level $\mathcal{V}_L$ (\ref{fesps}) 
%dim $\mathcal{O}(2^{2L})$
at all macro-grid points in $\bfa{U}_{2,L}=\displaystyle\bigcup_{l=1}^{L} \bfa{\mathcal{S}}_2^l$ \eqref{dU}).
%dof $\mathcal{O}(2^{2L})$
\end{proof}

\section{The proof of Theorem \ref{maintheorem} (convergence results for the hierarchical solve)}
\label{proofthm}
%efficient algorithm
%Error estimates

Involving the cell problems \eqref{eq:cell}, we now give a proof of Theorem \ref{maintheorem} that the hierarchical method (developed in Appendix \ref{hier}) reaches the same order of accuracy as the full solve, where the cell problems at every macroscopic point in \eqref{dU} are solved by the finest FE space \eqref{eq:FEapprox2}.  
%$\int_Y \nabla_y N^i_1 \dy = \int_Y \nabla_y N^i_2 \dy = \int_Y \nabla_y M_1 \dy = \int_Y \nabla_y M_2 \dy = 0\,.$

Here, the notation is as follows: for $i,j=1,2\,,$ %and $k=1,2,3,4$, 
$p_i,p_i' \in U_{1,L}, 
%\subset [a,b], 
\bfa{p}=(p_1,p_2), \; \bfa{p}'=(p'_1,p'_2)  \in \bfa{U}_{2,L}$ 
%\subset [a,b]^2 
(defined in \eqref{dU}); $\phi_j, \psi_j \in \mathcal{W}=H^1_{\#}(Y)$ (defined in (\ref{trysps})); $l =\overline{1,L}$; the solutions of the cell problems \eqref{eq:cell} are $N^i_j, M_j \in \mathcal{U} = H^2_{\#}(Y) \subset \mathcal{V}=H^1_{\#}(Y) / \mathbb{R}$ (defined in (\ref{fesps})); $\bar{N}^i_j, \bar{M}_j \in \mathcal{V}_{L-l+1}$ (defined in \eqref{eq:main11db} and \eqref{eq:main25''}); 
%\mathcal{V}_{L-l+1}
%(if var s) 
%and in \mathcal{V}_{L-l'+1} if var s', thus depend on vars, so this part of notations can omit
${\bar{N^i_j}}^c, {\bar{M_j^c}} \in \mathcal{V}_{L-l+1}$ 
%\mathcal{V}_{L-l+1}
(defined in \eqref{eq:main52'dbc}); ${N^i_j}^c, {M_j^c} \in \mathcal{U} \subset \mathcal{V}$ (to be defined in \eqref{dc}); 
%for proof only, not for main thm
${\Bar{\Bar{N^i_j}}}^c, {\Bar{\Bar{M_j^c}}} \in \mathcal{V}_{L-l+1}$ (to be defined in \eqref{eq:main53'}).  
%for proof only, not for main thm
It is required that the coefficients $k_j$ and $Q_j$ from \eqref{eq:cell} satisfy \eqref{Coercivity} and Assumption \ref{Lipschitz}.  Throughout this section, we assume that 
%(without loss of generality)
\begin{equation}\label{0hypo}
\dd \int_Y N^i_j \, \dy =  \int_Y  M_j \, \dy = 0\,.
\end{equation}

Before proving the main convergence results for the hierarchical solve (Theorem \ref{maintheorem}), we need several lemmas as follows.

\begin{lemma}
\label{lemma1}
There exists some positive number $C$ such that
\[||\nabla_y N^i_j(\bfa{y},p_j)||_{\bfa{L}^2(Y)}
%, ||\nabla_y N^i_2(\bfa{y},p_2)||_{\bfa{L}^2(Y)} 
\leq C\,,\quad
||\nabla_yM_j(\bfa{y},\bfa{p})||_{\bfa{L}^2(Y)}
%,\ ||\nabla_yM_2(\bfa{y},p_1,p_2)||_{\bfa{L}^2(Y)} 
\leq C\,.\]
\begin{comment}
for $p_j \in [a,b]$, $\bfa{p} = (p_1,p_2) \in [a,b]^2\,,$ with $i,j=1,2\,.$
\end{comment}
\end{lemma}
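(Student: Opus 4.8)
The plan is to run the standard elliptic energy estimate: test each variational identity in \eqref{eq:main11} against its own solution and invoke the uniform coercivity and boundedness of the coefficients recorded in \eqref{Coercivity}.

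First I would treat the corrector $N^i_j(\cdot,p_j)$. Choosing $\phi_j = N^i_j(\cdot,p_j) \in H^1_{\#}(Y)$ in the first line of \eqref{eq:main11}, the left-hand side is bounded below by $\underline{k}\,\|\nabla_y N^i_j(\cdot,p_j)\|_{\bfa{L}^2(Y)}^2$ using $k_j(\bfa{y},p_j) \geq \underline{k}$, while the right-hand side is bounded above by $\overline{k}\,\|\nabla_y N^i_j(\cdot,p_j)\|_{\bfa{L}^2(Y)}$ via Cauchy--Schwarz together with $k_j \leq \overline{k}$ and $|Y| = 1 = |\bfa{e}^i|$. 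Dividing through yields $\|\nabla_y N^i_j(\cdot,p_j)\|_{\bfa{L}^2(Y)} \leq \overline{k}/\underline{k}$, a bound independent of $p_j$ and of $i,j$.

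Next I would treat $M_j(\cdot,\bfa{p})$. Choosing $\psi_j = M_j(\cdot,\bfa{p})$ in the second line of \eqref{eq:main11}, coercivity again gives the lower bound $\underline{k}\,\|\nabla_y M_j(\cdot,\bfa{p})\|_{\bfa{L}^2(Y)}^2$ on the left. For the right-hand side $\int_Y Q_j(\bfa{y},\bfa{p})\,M_j(\bfa{y},\bfa{p})\,\dy$ I would use $Q_j \leq \overline{m}$ from \eqref{Coercivity} and Cauchy--Schwarz to get $\overline{m}\,\|M_j(\cdot,\bfa{p})\|_{L^2(Y)}$, and then apply the Poincaré--Wirtinger inequality on $Y$: since $M_j(\cdot,\bfa{p}) \in \mathcal{V} = H^1_{\#}(Y)/\mathbb{R}$, equivalently $\int_Y M_j\,\dy = 0$ by \eqref{0hypo}, we have $\|M_j(\cdot,\bfa{p})\|_{L^2(Y)} \leq C_P\,\|\nabla_y M_j(\cdot,\bfa{p})\|_{\bfa{L}^2(Y)}$ with $C_P = C_P(Y)$. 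Combining and dividing gives $\|\nabla_y M_j(\cdot,\bfa{p})\|_{\bfa{L}^2(Y)} \leq \overline{m}\,C_P/\underline{k}$, uniformly in $\bfa{p}$ and $i,j$; taking $C$ to be the maximum of the two constants completes the argument.

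I do not anticipate a genuine obstacle: this is the Lax--Milgram a priori estimate, uniform in the macroscopic parameters precisely because $\underline{k},\overline{k},\overline{m}$ and the Poincaré constant $C_P$ are. The only point deserving slight care is the $M_j$ bound, where the zero-order source $Q_j$ forces one through the Poincaré--Wirtinger inequality (and hence uses the zero-mean normalization), unlike the $N^i_j$ equation whose right-hand side already pairs against $\nabla_y \phi_j$.
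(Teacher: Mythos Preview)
Your proposal is correct and follows essentially the same route as the paper: test each equation in \eqref{eq:main11} against its own solution, use the coercivity and boundedness bounds from \eqref{Coercivity} together with Cauchy--Schwarz, and for $M_j$ pass through the Poincar\'{e}(--Wirtinger) inequality using the zero-mean normalization \eqref{0hypo}. The only difference is cosmetic: you track the explicit constants $\overline{k}/\underline{k}$ and $\overline{m}\,C_P/\underline{k}$, whereas the paper simply writes $C$.
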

\begin{proof}
 In Eq.\ (\ref{eq:main11}), substituting $N^i_j$ and $M_j$ into $\phi_j$ and $\psi_j$ respectively, one gets
\begin{equation}
\label{eq:main27'}
\begin{split}
\int_Y k_j(\bfa{y},p_j) \nabla_y N^i_j(\bfa{y},p_j) \cdot \nabla_y N^i_j(\bfa{y},p_j) \, \dy
 &=- \int_Y k_j(\bfa{y},p_j) \bfa{e}^i \cdot \nabla_y N^i_j(\bfa{y},p_j) \, \dy\,,\\
\int_Y k_j(\bfa{y},p_j) \nabla_y M_j(\bfa{y},\bfa{p}) \cdot \nabla_y M_j(\bfa{y},\bfa{p}) \, \dy
&=  \int_Y Q_j(\bfa{y},\bfa{p}) M_j(\bfa{y},\bfa{p})\, \dy\,.
\end{split}
\end{equation}
%Adding above two equations, we obtain
%\begin{equation}
%\label{eq:main27''}
%\begin{split}
%\int_Y k_1(y,s_1) \nabla_y N^i_1(y,s_1) \cdot \nabla_y N^i_1(y,s_1) \mathrm{d}y
%+ \int_Y k_2(y,s_2) \nabla_y N^i_2(y,s_2) \cdot \nabla_y N^i_2(y,s_2) \mathrm{d}y\\+ \int_Y Q(y,s_1,s_2)(N^i_1 - N^i_2)^2 \mathrm{d}y&\\
%= - \int_Y k_1(y,s_1) e^i \cdot \nabla_y N^i_1(y,s_1)\mathrm{d}y - \int_Y k_2(y,s_2) e^i \cdot \nabla_y N^i_2(y,s_2) \mathrm{d}y&. 
%\end{split}
%\end{equation}
Then, by \eqref{Coercivity}, Cauchy-Schwarz inequality, and Poincar\'{e} inequality, we have from \eqref{eq:main27'} that
\begin{equation}
\label{eq:main27'''}
\begin{split}
||\nabla_y N^i_j(\bfa{y},p_j)||^2_{\bfa{L}^2(Y)}  \leq C ||\nabla_y N^i_j(\bfa{y},p_j)||_{\bfa{L}^2(Y)} \,,\\
||\nabla_y M_j(\bfa{y},\bfa{p})||^2_{\bfa{L}^2(Y)}  \leq C ||M_j(\bfa{y},\bfa{p})||_{L^2(Y)} \leq C ||\nabla_y M_j(\bfa{y},\bfa{p})||_{\bfa{L}^2(Y)}\,,
\end{split}
\end{equation}
for some $C > 0$. We thus obtain
\begin{equation}
\label{eq:main27'''r1}
\begin{split}
||\nabla_y N^i_j(\bfa{y},p_j)||_{\bfa{L}^2(Y)}  \leq C\,, \quad
||\nabla_y M_j(\bfa{y},\bfa{p})||_{\bfa{L}^2(Y)}  \leq C \,.
\end{split}
\end{equation}
These prove the Lemma.
\end{proof}

%%%%%%%%%%%%%%%%%%%%%%%%%%%%%%%%%%%.    ||\nabla N||_{L^2(Y)} < c |x-x'|     %%%%%%%%%%%%%%%%%%%%%%%%%%%%%%%
\begin{comment}
For $i,j=1,2,$ let $p_j$ belong to 
%level $\mathcal{S}_1^l$ in 
$U_{1,L}$ and $\bfa{p}=(p_1,p_2)$ lie on 
%level $\bfa{\mathcal{S}}_2^l$ in 
$\bfa{U}_{2,L}$ (defined in \eqref{dU});
%\cite{brown13} p13 last (42)
let ${N^i_j}(\cdot,p_j), {N^i_j}(\cdot,p_j'), {M_j}(\cdot,\bfa{p}), {M_j}(\cdot,\bfa{p}')$ ($\in \mathcal{V}$) satisfy \eqref{eq:main11}.
\end{comment}
%%%%

%dc
For the proofs of the later lemmas and thus for proving the main Theorem \ref{maintheorem}, we define the continuous correction terms 
(which are in fact the terms to be corrected) by
%these terms for proof only, not for main thm
%
%since S and U are discrete set in R, the function N^c is continous: https://math.stackexchange.com/questions/704517/every-function-from-a-discrete-subset-is-continuous
\begin{equation}\label{dc}
 {N^i_j}^c(\cdot,p_j)={N^i_j}(\cdot,p_j)-{N^i_j}(\cdot,p_j'), \qquad \ {M_j^c}(\cdot,\bfa{p})={M_j}(\cdot,\bfa{p})-{M_j}(\cdot,\bfa{p}')\,.
\end{equation}
These $ {N^i_j}^c(\cdot,p_j), \ {M_j^c}(\cdot,\bfa{p})$ ($\in \mathcal{U} = H^2_{\#}(Y) \subset \mathcal{V}=H^1_{\#}(Y) / \mathbb{R}$) %(\ref{fesps})) 
hence satisfy Eq.\ (\ref{eq:main11}), 
%where $Q_1 = Q_2 = Q$ as in \eqref{q12}, 
and we get from there the following equations:  
\begin{equation}
\label{eq:main52dc} 
\begin{split}
\int_Y &k_j(\bfa{y},p_j) \nabla_y {N^{i}_j}^c(\bfa{y},p_j) \cdot  \nabla_y \phi_j(\bfa{y}) \, \dy\\ = & 
-\int_Y (k_j(\bfa{y},p_j) - k_j(\bfa{y},p_j'))\nabla_y {N^{i}_j}(\bfa{y},p_j') \cdot \nabla_y \phi_j(\bfa{y}) \, \dy\\
& -\int_Y (k_j(\bfa{y},p_j) - k_j(\bfa{y},p_j')) \bfa{e}^i \cdot \nabla_y \phi_j(\bfa{y}) \, \dy\,,
\end{split}
\end{equation}
\begin{equation}
\label{eq:main52'dc} 
\begin{split}
\int_Y &k_j(\bfa{y},p_j) \nabla_y {M_j^c}(\bfa{y},\bfa{p}) \cdot  \nabla_y \psi_j(\bfa{y}) \, \dy\\ = & 
-\int_Y (k_j(\bfa{y},p_j) - k_j(\bfa{y},p'_j))\nabla_y M_j(\bfa{y},\bfa{p}') \cdot \nabla_y \psi_j(\bfa{y}) \, \dy \\
& -\int_Y (Q_j (\bfa{y},\bfa{p})-Q_j (\bfa{y},\bfa{p}'))\psi_j(\bfa{y}) \, \dy\,,
\end{split}
\end{equation}
for $i,j = 1,2$ and for all $\phi_j, \psi_j \in \mathcal{W}$ defined in (\ref{trysps}).  Indeed, it follows from \eqref{eq:main11} that
%(where $W$ is defined in (\ref{dW}), see \cite{rh1} p4, p8)

\begin{align}\label{contproof}
\begin{split}
 \int_Y &k_j(\bfa{y},p_j) \nabla_y {N^i_j}^c(\bfa{y},p_j) \cdot  \nabla_y \phi_j(\bfa{y}) \, \dy\\ 
 = & \int_Y k_j(\bfa{y},p_j) \nabla_y {N^i_j}(\bfa{y},p_j) \cdot  \nabla_y \phi_j(\bfa{y}) \, \dy
 -\int_Y k_j(\bfa{y},p_j) \nabla_y {N^i_j}(\bfa{y},p_j') \cdot  \nabla_y \phi_j(\bfa{y}) \, \dy\\
 =& - \int_Y k_j(\bfa{y},p_j) \bfa{e}^i \cdot \nabla_y \phi_j(\bfa{y})\, \dy  
 -\int_Y k_j(\bfa{y},p_j) \nabla_y {N^i_j}(\bfa{y},p_j') \cdot  \nabla_y \phi_j(\bfa{y}) \, \dy\\
 =& - \int_Y k_j(\bfa{y},p_j) \bfa{e}^i \cdot \nabla_y \phi_j(\bfa{y})\, \dy - \int_Y (k_j(\bfa{y},p_j) - k_j(\bfa{y},p_j')) \nabla_y {N^i_j}(\bfa{y},p_j') \cdot  \nabla_y \phi_j(\bfa{y}) \, \dy \\
  & \hspace{5pt} - \int_Y  k_j(\bfa{y},p_j') \nabla_y {N^i_j}(\bfa{y},p_j') \cdot  \nabla_y \phi_j(\bfa{y}) \, \dy \\
  =  & - \int_Y (k_j(\bfa{y},p_j) - k_j(\bfa{y},p_j')) \nabla_y {N^i_j}(\bfa{y},p_j') \cdot  \nabla_y \phi_j(\bfa{y}) \, \dy - \int_Y k_j(\bfa{y},p_j) \bfa{e}^i \cdot \nabla_y \phi_j(\bfa{y})\, \dy\\
  & \hspace{5pt} + \int_Y k_j(\bfa{y},p_j') \bfa{e}^i \cdot \nabla_y \phi_j(\bfa{y})\, \dy\,.
  %=  & - \int_Y (k_j(\bfa{y},\todo{p}) - k_j(\bfa{y},\todo{p}')) \nabla_y {N^i_j}(\bfa{y},\todo{p}') \cdot  \nabla_y \phi_1(\bfa{y}) \, \dy\\
  %& \hspace{5pt} 
  %-\int_Y (k_j(\bfa{y},\todo{p}) - k_j(\bfa{y},\todo{p}')) \bfa{e}^i \cdot \nabla_y \phi_1(\bfa{y}) \, \dy\,.
\end{split}
\end{align}
Therefore, \eqref{eq:main52dc} is justified, and similarly for \eqref{eq:main52'dc}.  
%%%%%%
\begin{lemma}
\label{lemma_gradlips}
There exists some positive constant $C$ such that \\
$||\nabla_y {N^i_j}^c(\bfa{y},p_j)||_{\bfa{L}^2(Y)}$ 
%$||\nabla_y {N^i_2}^c(\bfa{y},\todo{p_2})||_{L^2(Y)}$ 
$\leq C|p_j-p_j'|\,,\qquad 
||\nabla_y M_j^c(\bfa{y},\bfa{p})||_{\bfa{L}^2(Y)}$
% $||\nabla_y M_2^c(\bfa{y},p_1,p_2)||_{L^2(Y)}$ 
$\leq C |\bfa{p}-\bfa{p}'|$\,. \\ 
%rhs = (|p_1 - s'_1|^2 + |p_2 - s'_2|^2)^{1/2} \leq ((|s_1 - s'_1| + |s_2 - s'_2|)^2)^{1/2} = |s_1 - s'_1| + |s_2 - s'_2|
%with $\bfa{y} \in Y$, For $i,j=1,2,$ let $p_j$ belong to 
%level $\mathcal{S}_1^l$ in
\begin{comment}
for $p_j, p_j' \in U_{1,L}$ and $\bfa{p} = (p_1,p_2), \bfa{p}'=(p'_1,p'_2) \in \bfa{U}_{2,L}\,,$ with $i,j=1,2\,.$
\end{comment}
\end{lemma}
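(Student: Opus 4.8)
The plan is to prove both estimates by the standard energy argument applied directly to the weak problems \eqref{eq:main52dc} and \eqref{eq:main52'dc} that the continuous correction terms satisfy, using coercivity on the left and the Lipschitz bounds of Assumption \ref{Lipschitz} together with the a priori bounds of Lemma \ref{lemma1} on the right.

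First I would treat ${N^i_j}^c$. Test \eqref{eq:main52dc} with $\phi_j = {N^i_j}^c(\bfa{y},p_j) \in \mathcal{W}$. By the coercivity bound $k_j \ge \underline{k}$ in \eqref{Coercivity}, the left-hand side is at least $\underline{k}\,\|\nabla_y {N^i_j}^c(\bfa{y},p_j)\|_{\bfa{L}^2(Y)}^2$. On the right-hand side, apply Cauchy--Schwarz to each integral, pull out $\|k_j(\bfa{y},p_j) - k_j(\bfa{y},p_j')\|_{L^\infty(Y)} \le C|p_j - p_j'|$ from Assumption \ref{Lipschitz}, bound $\|\nabla_y N^i_j(\bfa{y},p_j')\|_{\bfa{L}^2(Y)} \le C$ by Lemma \ref{lemma1}, and note $\|\bfa{e}^i\|_{\bfa{L}^2(Y)} = |Y|^{1/2}$ is a constant; the right-hand side is thus at most $C|p_j - p_j'|\,\|\nabla_y {N^i_j}^c(\bfa{y},p_j)\|_{\bfa{L}^2(Y)}$. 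Dividing through (the bound being trivial if the norm vanishes) gives the first claim.

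Next I would treat $M_j^c$ in the same way. By the zero-mean hypothesis \eqref{0hypo}, both $M_j(\bfa{y},\bfa{p})$ and $M_j(\bfa{y},\bfa{p}')$ have zero average over $Y$, hence so does ${M_j^c}(\bfa{y},\bfa{p}) = M_j(\bfa{y},\bfa{p}) - M_j(\bfa{y},\bfa{p}')$, so the Poincar\'e inequality yields $\|M_j^c(\bfa{y},\bfa{p})\|_{L^2(Y)} \le C\,\|\nabla_y M_j^c(\bfa{y},\bfa{p})\|_{\bfa{L}^2(Y)}$. Testing \eqref{eq:main52'dc} with $\psi_j = M_j^c(\bfa{y},\bfa{p})$, the left-hand side is at least $\underline{k}\,\|\nabla_y M_j^c(\bfa{y},\bfa{p})\|_{\bfa{L}^2(Y)}^2$. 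On the right-hand side, the first integral is bounded by $\|k_j(\bfa{y},\bfa{p}) - k_j(\bfa{y},\bfa{p}')\|_{L^\infty(Y)}\,\|\nabla_y M_j(\bfa{y},\bfa{p}')\|_{\bfa{L}^2(Y)}\,\|\nabla_y M_j^c(\bfa{y},\bfa{p})\|_{\bfa{L}^2(Y)} \le C|\bfa{p} - \bfa{p}'|\,\|\nabla_y M_j^c(\bfa{y},\bfa{p})\|_{\bfa{L}^2(Y)}$ by Assumption \ref{Lipschitz} (with $|p_j - p_j'| \le |\bfa{p} - \bfa{p}'|$) and Lemma \ref{lemma1}; the second integral is bounded by $\|Q_j(\bfa{y},\bfa{p}) - Q_j(\bfa{y},\bfa{p}')\|_{L^2(Y)}\,\|M_j^c(\bfa{y},\bfa{p})\|_{L^2(Y)} \le C|\bfa{p} - \bfa{p}'|\,\|\nabla_y M_j^c(\bfa{y},\bfa{p})\|_{\bfa{L}^2(Y)}$ using the Lipschitz bound on $Q_j$ and Poincar\'e. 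Combining and dividing by $\|\nabla_y M_j^c(\bfa{y},\bfa{p})\|_{\bfa{L}^2(Y)}$ completes the proof.

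There is no serious obstacle here: the argument is a routine Lax--Milgram-type energy estimate. The only points needing a moment of care are verifying that $M_j^c$ inherits the zero-mean property from \eqref{0hypo} so that Poincar\'e is legitimately available, and being consistent about the $L^\infty$-versus-$L^2$ norms on the coefficient differences and about the vector norm $|\bfa{p} - \bfa{p}'|$ absorbing the scalar difference $|p_j - p_j'|$.
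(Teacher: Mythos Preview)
Your proposal is correct and follows essentially the same route as the paper: test \eqref{eq:main52dc}--\eqref{eq:main52'dc} with the correction terms themselves, use the lower bound on $k_j$ for coercivity on the left, and combine Cauchy--Schwarz, the Lipschitz bounds of Assumption~\ref{Lipschitz}, and the a~priori bound of Lemma~\ref{lemma1} on the right to obtain \eqref{eq:main32'} and conclude. Your explicit invocation of the zero-mean property \eqref{0hypo} to justify Poincar\'e for the $Q_j$-term in the $M_j^c$ estimate is a detail the paper leaves implicit, but the argument is otherwise identical.
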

\begin{proof}
%We start with the following equations \todo{we define them or get them from what?} \jrp{this is from the definition ${N^i_k}^c(y,s) = N^i_k(y,s) - N^i_k(y,s'), \ M_k^c(y,\bfa{s}) = M_k(y,\bfa{s})-M_k(y,\bfa{s}')$. I will add it when I introduce the overview of the hierarchical algorithm.}
%\begin{equation}
%\label{eq:main28}
%\begin{split}
%&\div_y(k_1(y,s_1) \nabla_y {N^{i}_1}^c(y,s_1)) \\
%&= -\div_y((k_1(y,s_1) - k_1(y,s_1'))\nabla_y N^i_1(y,s_1'))
%-\div_y((k_1(y,s_1) - k_1(y,s_1') e^i)\,,
%\end{split}
%\end{equation}
%and
%\begin{equation}
%\label{eq:main28'}
%\begin{split}
%&\div_y(k_1(y,s_1) \nabla_y M_1^c(y,s_1,s_2)) \\ 
%&= -\div_y((k_1(y,s_1) - k_1(y,s_1'))\nabla_y M_1(y,s_1',s_2'))
%-(Q_1(y,s_1,s_2)-Q_1(y,s_1',s_2')).
%\end{split}
%\end{equation}
Substituting ${N^i_j}^c\,, {M_j^c}$ (defined in \eqref{dc}) into $\phi_j, \psi_j$ respectively in the system (\ref{eq:main52dc})--(\ref{eq:main52'dc}), one obtains
\begin{align*}
 \begin{split}
\int_Y & k_j(\bfa{y},p_j) \nabla_y {N^{i}_j}^c(\bfa{y},p_j) \cdot  \nabla_y {N^{i}_j}^c(\bfa{y},p_j) \, \dy \nonumber \\ 
= & -\int_Y (k_j(\bfa{y},p_j) - k_j(\bfa{y},p_j'))\nabla_y N^i_j(\bfa{y},p_j') \cdot \nabla_y {N^{i}_j}^c(\bfa{y},p_j) \, \dy \\
&-\int_Y (k_j(\bfa{y},p_j) - k_j(\bfa{y},p_j')) \bfa{e}^i \cdot \nabla_y {N^{i}_j}^c(\bfa{y},p_j) \, \dy\,,  %\label{eq:main29}\,, \\
\end{split}
\end{align*}
\begin{align}\label{eq:main29'}
\begin{split}
\int_Y & k_j(\bfa{y},p_j) \nabla_y {M_j^c}(\bfa{y},\bfa{p}) \cdot  \nabla_y {M_j^c}(\bfa{y},\bfa{p})\, \dy   \\ 
= &-\int_Y (k_j(\bfa{y},p_j) - k_j(\bfa{y},p_j'))\nabla_y M_j(\bfa{y},\bfa{p}') \cdot \nabla_y {M_j^c}(\bfa{y},\bfa{p})\, \dy  \\
& -\int_Y (Q_j(\bfa{y},\bfa{p})-Q_j(\bfa{y},\bfa{p}')){M_j^c}(\bfa{y},\bfa{p})\, \dy \,. 
\end{split}
\end{align}
%Adding these two equations, we obtain,
%\begin{equation}
%\label{eq:main30} 
%\begin{split}
%\int_Y &k_1(y,s_1) \nabla_y {N^{i}_1}^c(y,s_1) \cdot  \nabla_y {N^{i}_1}^c(y,s_1)\mathrm{d}y  
%+\int_Y k_2(y,s_2) \nabla_y {N^{i}_2}^c(y,s_2) \cdot  \nabla_y {N^{i}_2}^c(y,s_2)\mathrm{d}y \\
%&+ \int_Y Q(y,s_1,s_2)({N^{i}_2}^c(y,s_2) - {N^{i}_1}^c(y,s_1))^2\mathrm{d}y \\ = & 
%-\int_Y (k_1(y,s_1) - k_1(y,s_1'))\nabla_y N^i_1(y,s_1') \cdot \nabla_y {N^{i}_1}^c(y,s_1)\mathrm{d}y 
%-\int_Y (k_1(y,s_1) - k_1(y,s_1') e^i \cdot \nabla_y {N^{i}_1}^c(y,s_1)\mathrm{d}y 
%\\&-\int_Y (Q(y,s_1',s_2')-Q(y,s_1,s_2))(N^i_2(y,s_2')-N^i_1(y,s_1')){N^{i}_1}^c(y,s_1)\mathrm{d}y 
%\\&-\int_Y (k_2(y,s_2) - k_2(y,s_2'))\nabla_y N^i_2(y,s_2') \cdot \nabla_y {N^{i}_2}^c(y,s_2)\mathrm{d}y 
%\\&-\int_Y (k_2(y,s_2) - k_2(y,s_2')) e^i \cdot \nabla_y {N^{i}_2}^c(y,s_2)\mathrm{d}y 
%-\int_Y (Q(y,s_1',s_2')-Q(y,s_1,s_2))(N^i_1(y,s_1')-N^i_2(y,s_2')){N^{i}_2}^c(y,s_2)\mathrm{d}y .
%\end{split}
%\end{equation}
Note that for each fixed $p_j' \in U_{1,L}\,,$ it holds that $\nabla_y N^i_j(\bfa{y},p_j')$ and $\nabla_y M_j(\bfa{y},\bfa{p}')$ are uniformly bounded in $\bfa{L}^2(Y)$ by Lemma \ref{lemma1}.  From this remark, Assumption \ref{Lipschitz}, inequalities (\ref{Coercivity}) and Cauchy-Schwarz inequality, we have from \eqref{eq:main29'} that
\begin{equation}
\label{eq:main32'} 
\begin{split}
&||\nabla_y {N^i_j}^c||^2_{\bfa{L}^2(Y)}  \leq C|p_j - p_j'| \cdot ||\nabla_y {N^i_j}^c||_{\bfa{L}^2(Y)}\,,\\
&||\nabla_y {M_j^c}||^2_{\bfa{L}^2(Y)}\leq C|\bfa{p} - \bfa{p}'| \cdot ||\nabla_y {M_j^c}||_{\bfa{L}^2(Y)}\,,
\end{split}
\end{equation}
for all $\bfa{y}$ in $Y$. The last inequality comes from the fact that 
$|p_j - p'_j| \leq |\bfa{p} - \bfa{p}'|$, where $\bfa{p} = (p_1,p_2), \bfa{p}' = (p'_1,p'_2)\,.$
%assumption $\int_Y M_1 \dy = 0$.
%Similarly,
%\begin{equation}
%\label{eq:main32b'} 
%\begin{split}
%&||\nabla_y {N^i_2}^c||_{\bfa{L}^2(Y)}  \leq C|p_2 - p_2'|\,, \\
%&||\nabla_y {M_2^c}||_{\bfa{L}^2(Y)} \leq C|\bfa{p}- \bfa{p}'|.
%\end{split}
%\end{equation}
\end{proof}
%%%%%%%%%%%%%%%%%%%%%%%%%%%%%%%%%%%%%%%     \Delta < C    %%%%%%%%%%%%%%%%%%%%%%%%%%%%%%%%%%%%%%%%%%
We now use the assumption that our media are isotropic and thus each hydraulic conductivity becomes a function $k_i(\bfa{y},p_i)$ multiplying with the identity matrix, for $i=1,2\,.$ %(\cite{kisoho}).

\begin{lemma}
\label{lemma2}
%Assume that the functions $k_j(\bfa{y},p_j)\in C^1(Y;C([a,b]))\,.$
%p119 Folland
There exists some positive constant $C$ such that
\[||\Delta_y N^i_j(\bfa{y},p_j)||_{L^2(Y)}
%, ||\Delta_y N^i_2(\bfa{y},p_2)||_{L^2(Y)} \ 
\leq \ C\,,
\qquad ||\Delta_y M_j(\bfa{y},\bfa{p})||_{L^2(Y)}
%, ||\Delta_y M_2(\bfa{y},p_1,p_2)||_{L^2(Y)}  \ 
\leq \ C\,.\]
\begin{comment}
for $p_j \in [a,b]$, $\bfa{p} = (p_1,p_2) \in [a,b]^2\,,$ and 
%$N^i_j (\cdot,p_j)$, $M_j (\cdot,p_1,p_2)$ in $H^1_{\#}(Y)/\mathbb{R}$ 
$i,j=1,2\,.$
\end{comment}
\end{lemma}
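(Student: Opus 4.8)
The plan is to use the isotropy assumption just invoked to rewrite the cell problems \eqref{eq:nj}--\eqref{eq:mj} in nondivergence (strong) form, so that $\Delta_y N^i_j$ and $\Delta_y M_j$ become algebraic expressions in quantities that are already under control. Since the regularity space was fixed as $\mathcal{U} = H^2_\#(Y)$ and the cell solutions $N^i_j(\cdot,p_j), M_j(\cdot,\bfa{p})$ lie in $\mathcal{U}$, all the products appearing below are well defined in $L^2(Y)$ and the manipulation is legitimate for a.e.\ $\bfa{y} \in Y$.

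First I would expand the divergence by the product rule. Writing $k_j = k_j(\bfa{y},p_j)$ (a scalar in the isotropic case), \eqref{eq:nj} becomes $k_j\,\Delta_y N^i_j + \nabla_y k_j \cdot \nabla_y N^i_j + \partial_{y_i} k_j = 0$ and \eqref{eq:mj} becomes $k_j\,\Delta_y M_j + \nabla_y k_j \cdot \nabla_y M_j + Q_j = 0$. Using $k_j \geq \underline{k} > 0$ from \eqref{Coercivity}, I divide by $k_j$ to obtain
\[
\Delta_y N^i_j = -\frac{1}{k_j}\bigl(\nabla_y k_j \cdot \nabla_y N^i_j + \partial_{y_i} k_j\bigr)\,, \qquad \Delta_y M_j = -\frac{1}{k_j}\bigl(\nabla_y k_j \cdot \nabla_y M_j + Q_j\bigr)\,.
\]

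Then I would take $L^2(Y)$ norms and apply the triangle and Cauchy--Schwarz inequalities, which gives
\[
\|\Delta_y N^i_j\|_{L^2(Y)} \leq \tfrac{1}{\underline{k}}\bigl( \|\nabla_y k_j\|_{L^\infty(Y)}\,\|\nabla_y N^i_j\|_{\bfa{L}^2(Y)} + \|\partial_{y_i} k_j\|_{L^2(Y)} \bigr)\,,
\]
and likewise with $M_j$ and $Q_j$ in place of $N^i_j$ and $\partial_{y_i}k_j$. Now \eqref{Coercivity} supplies $\|\nabla_y k_j\|_{L^\infty(Y)} \leq \overline{k}$ (up to a dimensional constant), $\|\partial_{y_i} k_j\|_{L^2(Y)} \leq \overline{k}\,|Y|^{1/2}$, and $\|Q_j(\cdot,\bfa{p})\|_{L^2(Y)} \leq \overline{m}\,|Y|^{1/2}$, while Lemma \ref{lemma1} supplies $\|\nabla_y N^i_j\|_{\bfa{L}^2(Y)} \leq C$ and $\|\nabla_y M_j\|_{\bfa{L}^2(Y)} \leq C$. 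Combining these bounds yields $\|\Delta_y N^i_j\|_{L^2(Y)} \leq C$ and $\|\Delta_y M_j\|_{L^2(Y)} \leq C$, which is the claim.

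There is no genuine obstacle in this argument; the only point that needs a word of justification is the passage to nondivergence form, which requires $N^i_j, M_j \in H^2_\#(Y)$. This is exactly the regularity recorded in the choice $\mathcal{U} = H^2_\#(Y)$ for the cell solutions; alternatively one can cite periodic elliptic regularity for the divergence-form operator $\div_y(k_j \nabla_y \cdot)$, whose coefficient $k_j$ is Lipschitz in $\bfa{y}$ by the bound $|\nabla_y k_j| \leq \overline{k}$ in \eqref{Coercivity}. Everything else is a one-line estimate.
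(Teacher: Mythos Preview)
Your proof is correct and follows essentially the same approach as the paper: expand the cell equations \eqref{eq:cell} in nondivergence form using the isotropy assumption, divide by $k_j$ (using the lower bound $\underline{k}$ from \eqref{Coercivity}), and then bound the right-hand side via \eqref{Coercivity} and Lemma~\ref{lemma1}. Your version is in fact slightly more explicit in writing out the $L^2$ estimates and in flagging the $H^2_\#(Y)$ regularity needed to justify the strong-form expansion, but the argument is the same.
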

\begin{proof}
The cell problem (\ref{eq:cell}) can be rewritten as follows:
%A= k_1 symmetric
%use div(A^T v)=A.\nabla v + v.Div(A)
%not use this as it gives $k_1 \cdot \nabla^2 N_1^i + \nabla N_1^i \cdot div(k_1)$
%
%\bfa{k}_1 = k_1 * I, k_1 scalar function, isotropic case
%use div(fv) = fDiv(v) + v.\nabla(f)
%CHECKED
\begin{equation}
\label{eq:main32''} 
\begin{split}
k_j\Delta_y N^i_j + \nabla_y k_j \cdot \nabla_y N^i_j + \div_y (k_j \bfa{e}^i) = 0\,,
\end{split}
\end{equation}
\begin{equation}
\label{eq:main32'''} 
\begin{split}
k_j\Delta_y M_j + \nabla_y k_j \cdot \nabla_y M_j + Q_j = 0\,.
\end{split}
\end{equation}
Rearranging these equations, we have
\begin{equation}
\label{eq:main32*} 
\begin{split}
\Delta_y N^i_j = -\frac{1}{k_j} (\nabla_y k_j \cdot  \nabla_y N^i_j+ \div_y (k_j \bfa{e}^i))\,,
\end{split}
\end{equation}
\begin{equation}
\label{eq:main32**} 
\begin{split}
\Delta_y M_j = -\frac{1}{k_j} ( \nabla_y k_j \cdot \nabla_y M_j + Q_j)\,.
\end{split}
\end{equation}
By (\ref{Coercivity}) and Lemma \ref{lemma1}, there exists some positive constant $C$ such that 
\[||\Delta_y N^i_j(\bfa{y},p_j)||_{L^2(Y)} \leq C, \quad ||\Delta_y M_j(\bfa{y},\bfa{p})||_{L^2(Y)} \leq C\,.\]
%Similarly,
%\[||\Delta_y N^i_2(\bfa{y},p_2)||_{L^2(Y)} \ \leq \ C\,,\quad ||\Delta_y M_2(\bfa{y},p_1,p_2)||_{L^2(Y)}  \ \leq \ C\,.\]
\end{proof}

%%%%%%%%%%%%%%%%%%%%%%%%%%%%%%%%%%%.    \Delta < C |x-x'|     %%%%%%%%%%%%%%%%%%%%%%%%%%%%%%%%%%%%%%%%%%
%https://mathworld.wolfram.com/L-Infinity-Norm.html of vector is the max absolute value entry

%\todo{Richard, I moved your Assumption of the following lemma to Assumption \eqref{lipschitz}.}

\begin{lemma}
\label{lemma_trilips}
For some positive constant $C\,,$ we obtain
\begin{align}\label{eq:main32***}
 \begin{split}
||\Delta_y {N^i_j}^c(\bfa{y},p_j)||_{L^2(Y)}
%||\Delta_y {N^i_2}^c(\bfa{y},\todo{p})||_{L^2(Y)} 
\leq C|p_j-p_j'| \,,\qquad
||\Delta_y {M_j^c}(\bfa{y},\bfa{p})||_{L^2(Y)} 
%||\Delta_y {M_2^c}(\bfa{y},p_1,p_2)||_{L^2(Y)}  
\leq C|\bfa{p}-\bfa{p}'| \,.
\end{split}
\end{align}
\begin{comment}
for $p_j, p_j' \in U_{1,L}$ and $\bfa{p} = (p_1,p_2), \bfa{p}'=(p'_1,p'_2) \in \bfa{U}_{2,L}\,,$ with $i,j=1,2\,.$
\end{comment}
\end{lemma}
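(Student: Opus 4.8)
The plan is to mimic the proof of Lemma~\ref{lemma_gradlips}, but now one level up: instead of testing the correction equations with the correction terms, I will apply the Laplacian-elimination idea of Lemma~\ref{lemma2} to the \emph{correction} equations \eqref{eq:main52dc}--\eqref{eq:main52'dc}. First I would rewrite \eqref{eq:main52dc} and \eqref{eq:main52'dc} in their strong (pointwise) form. Since $\mathcal{U}=H^2_\#(Y)$ and the media are isotropic, $k_j$ is a scalar, so \eqref{eq:main52dc} becomes, after integrating by parts,
\begin{equation*}
k_j(\bfa{y},p_j)\Delta_y {N^i_j}^c + \nabla_y k_j(\bfa{y},p_j)\cdot\nabla_y {N^i_j}^c
= -\div_y\!\big[(k_j(\bfa{y},p_j)-k_j(\bfa{y},p_j'))(\bfa{e}^i + \nabla_y N^i_j(\bfa{y},p_j'))\big]\,,
\end{equation*}
and analogously for ${M_j^c}$ with right-hand side $-\div_y[(k_j(\bfa{y},p_j)-k_j(\bfa{y},p_j'))\nabla_y M_j(\bfa{y},\bfa{p}')]-(Q_j(\bfa{y},\bfa{p})-Q_j(\bfa{y},\bfa{p}'))$.

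Next I would solve these for $\Delta_y {N^i_j}^c$ and $\Delta_y {M_j^c}$ by dividing through by $k_j$, which is legitimate thanks to the uniform lower bound $\underline{k}$ in \eqref{Coercivity}:
\begin{equation*}
\Delta_y {N^i_j}^c = -\frac{1}{k_j}\Big(\nabla_y k_j\cdot\nabla_y {N^i_j}^c + \div_y\!\big[(k_j(\bfa{y},p_j)-k_j(\bfa{y},p_j'))(\bfa{e}^i + \nabla_y N^i_j(\bfa{y},p_j'))\big]\Big)\,.
\end{equation*}
Then I would take the $L^2(Y)$ norm and bound each piece. The term $\tfrac1{k_j}\nabla_y k_j\cdot\nabla_y {N^i_j}^c$ is controlled by $\overline k/\underline k$ times $\|\nabla_y {N^i_j}^c\|_{\bfa{L}^2(Y)}$, which is $\le C|p_j-p_j'|$ by Lemma~\ref{lemma_gradlips}. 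For the divergence term I would expand: $\div_y[(k_j(\bfa{y},p_j)-k_j(\bfa{y},p_j'))(\bfa{e}^i+\nabla_y N^i_j(\bfa{y},p_j'))] = \nabla_y(k_j(\bfa{y},p_j)-k_j(\bfa{y},p_j'))\cdot(\bfa{e}^i+\nabla_y N^i_j(\bfa{y},p_j')) + (k_j(\bfa{y},p_j)-k_j(\bfa{y},p_j'))\Delta_y N^i_j(\bfa{y},p_j')$; the first summand is bounded using the second Lipschitz bound of \eqref{lipschitz} on $\nabla_y k_j$ together with Lemma~\ref{lemma1} (uniform boundedness of $\nabla_y N^i_j(\cdot,p_j')$), and the second summand using the first Lipschitz bound of \eqref{lipschitz} together with Lemma~\ref{lemma2} (uniform boundedness of $\Delta_y N^i_j(\cdot,p_j')$). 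Every one of these contributes a factor $|p_j-p_j'|$, giving $\|\Delta_y {N^i_j}^c\|_{L^2(Y)}\le C|p_j-p_j'|$. The $M_j^c$ estimate is identical except for the extra term $\tfrac1{k_j}(Q_j(\bfa{y},\bfa{p})-Q_j(\bfa{y},\bfa{p}'))$, handled by the last Lipschitz bound of \eqref{lipschitz}, and one uses $|p_j-p_j'|\le|\bfa{p}-\bfa{p}'|$ to get the final $|\bfa{p}-\bfa{p}'|$ factor.

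The only real subtlety — and hence the step I expect to be the main obstacle — is justifying that the correction terms ${N^i_j}^c, {M_j^c}$ actually lie in $H^2_\#(Y)$ so that the strong form and the pointwise division by $k_j$ make sense; this follows from the definition \eqref{dc} since both ${N^i_j}(\cdot,p_j)$ and ${N^i_j}(\cdot,p_j')$ are in the regularity space $\mathcal{U}=H^2_\#(Y)$ (established via Lemma~\ref{lemma2}), so their difference is too, and likewise one needs $k_j(\cdot,p_j)$ to be $H^1$ in $\bfa{y}$ with the bounds of \eqref{Coercivity}–\eqref{lipschitz} to make the product rule legitimate. Once that regularity bookkeeping is in place, the estimate is a routine triangle-inequality argument exactly parallel to Lemma~\ref{lemma2} and Lemma~\ref{lemma_gradlips}, so I would state it briefly and conclude.
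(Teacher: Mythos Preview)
Your proposal is correct and follows essentially the same approach as the paper: the paper also passes to the strong form of \eqref{eq:main52dc}--\eqref{eq:main52'dc}, isolates $\Delta_y{N^i_j}^c$ and $\Delta_y{M_j^c}$ by dividing through by $k_j$, and then bounds each resulting term using \eqref{Coercivity}, Assumption~\ref{Lipschitz}, and Lemmas~\ref{lemma1}, \ref{lemma_gradlips}, and~\ref{lemma2}. Your treatment of the $H^2$-regularity bookkeeping is in fact more explicit than the paper's, which simply asserts the strong form without comment.
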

\begin{proof}
From Definition \eqref{dc} and the system \eqref{eq:main52dc}--(\ref{eq:main52'dc}), we deduce that
%checked ibp
\begin{equation}
\label{eq:main33}
\begin{split}
& k_j(\bfa{y},p_j) \Delta_y {N^{i}_j}^c(\bfa{y},p_j) +\nabla_y k_j(\bfa{y},p_j) \cdot \nabla_y {N^{i}_j}^c(\bfa{y},p_j) \\
&=  -\nabla_y(k_j(\bfa{y},p_j) - k_j(\bfa{y},p_j')) \cdot \nabla_y N^i_j(\bfa{y},p_j')\\
 & \quad - (k_j(\bfa{y},p_j) - k_j(\bfa{y},p_j'))\Delta_y N^i_j(\bfa{y},p_j')
-\div_y((k_j(\bfa{y},p_j) - k_j(\bfa{y},p_j')) \bfa{e}^i)\,,
\end{split}
\end{equation}
\begin{equation}
\label{eq:main33'}
\begin{split}
& k_j(\bfa{y},p_j) \Delta_y {M_j^c}(\bfa{y},\bfa{p}) +\nabla_y k_j(\bfa{y},p_j) \cdot \nabla_y {M_j^c}(\bfa{y},\bfa{p}) \\
&=  
 -\nabla_y(k_j(\bfa{y},p_j) - k_j(\bfa{y},p_j')) \cdot \nabla_y M_j(\bfa{y},\bfa{p}')\\
 & \quad - (k_j(\bfa{y},p_j) - k_j(\bfa{y},p_j'))\Delta_y M_j(\bfa{y},\bfa{p}')
- (Q_j(\bfa{y},\bfa{p})-Q_j(\bfa{y},\bfa{p}'))\,.
\end{split}
\end{equation}
Therefore, we obtain
\begin{equation}
\label{eq:main33''}
\begin{split}
&\Delta_y {N^{i}_j}^c(\bfa{y},p_j) \\
&= \frac{1}{k_j}\left [-\nabla_y k_j(\bfa{y},p_j) \cdot \nabla_y {N^{i}_j}^c(\bfa{y},p_j) -\nabla_y(k_j(\bfa{y},p_j) - k_j(\bfa{y},p_j'))\cdot \nabla_y N^i_j(\bfa{y},p_j')\right.\\
&\left. \hspace{35pt} - (k_j(\bfa{y},p_j) - k_j(\bfa{y},p_j'))\Delta_y N^i_j(\bfa{y},p_j')
-\div_y((k_j(\bfa{y},p_j) - k_j(\bfa{y},p_j')) \bfa{e}^i)
\right]\,,
\end{split}
\end{equation}
\begin{equation}
\label{eq:main33'''}
\begin{split}
& \Delta_y M_j^c(\bfa{y},\bfa{p}) \\
&= \frac{1}{k_j}\left[- \nabla_y k_j(\bfa{y},p_j) \cdot \nabla_y M_j^c(\bfa{y},\bfa{p})   -\nabla_y(k_j(\bfa{y},p_j) - k_j(\bfa{y},p_j')) \cdot \nabla_y M_j(\bfa{y},\bfa{p}')\right.\\
 &\left. \hspace{55pt} - (k_j(\bfa{y},p_j) - k_j(\bfa{y},p_j'))\Delta_y M_j(\bfa{y},\bfa{p}')
- (Q_j(\bfa{y},\bfa{p})-Q_j(\bfa{y},\bfa{p}'))\right]\,.
\end{split}
\end{equation}
Taking $||\cdot||_{L^2(Y)}$ on both sides of Eqs.\ (\ref{eq:main33''}) and (\ref{eq:main33'''}), then using Assumption \eqref{lipschitz}, Poincar\'{e} inequality, Assumption \ref{Coercivity}, Lemmas \ref{lemma_gradlips}, \ref{lemma1} and \ref{lemma2}, we derive that there exists some positive constant $C$ such that
\begin{equation}
\label{eq:main33*}
\begin{split}
||\Delta_y {N^i_j}^c(\bfa{y},p_j)||_{L^2(Y)} \leq C|p_j-p'_j|\,, \quad ||\Delta_y M_j^c(\bfa{y},\bfa{p})||_{L^2(Y)} \leq C|\bfa{p}-\bfa{p}'|\,. \\
%By regularity theorem, we have $ ||{N^i_1}^c||_{H^2} < ||\Delta_y {N^i_1}^c||_{L^2} < C|x-x'|$.
%Similarly, $ ||{N^i_2}^c||_{H^2} < ||\Delta_y {N^i_2}^c||_{L^2} < C|x-x'|$.\\
\end{split}
\end{equation} 

%Similarly, $||\Delta_y {N^i_2}^c(\bfa{y},p_2)||_{L^2(Y)} \leq C|p_2-p'_2|\,, \quad ||\Delta_y M_2^c(\bfa{y},p_1,p_2)||_{L^2(Y)}  \leq C|\bfa{p}-\bfa{p}'|\,.$
\end{proof}
%%%%%%%%%%%%%%%%%%%%%%%%%%%%%%%%%%%%%.     N < C|x-x'|.    %%%%%%%%%%%%%%%%%%%%%%%%%%%%%%%%%%%%%%%%%
\begin{lemma}
\label{lemma3}
%we can also assume \int_Y (N^i_1}^c+N^i_1}^c)dy = 0
%\cite{rh1} page8
For some positive constant $C\,,$ we have
\begin{align}\label{lb5ine}
\begin{split}
||{N^i_j}^c(\bfa{y},p_j)||_{L^2(Y)}
%||{N^i_2}^c(\bfa{y},\todo{p})||_{L^2(Y)} 
\leq C|p_j-p_j'|\,,\qquad
||M_j^c(\bfa{y},\bfa{p})||_{L^2(Y)}
%||M_2^c(\bfa{y},p_1,p_2)||_{L^2(Y)} 
\leq C|\bfa{p}-\bfa{p}'|\,.
\end{split}
\end{align}
%for $p_j, p_j' \in U_{1,L}$ and $\bfa{p} = (p_1,p_2), \bfa{p}'=(p'_1,p'_2) \in \bfa{U}_{2,L}\,,$ with $i,j=1,2\,.$
\end{lemma}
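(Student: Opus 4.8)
The plan is to obtain both bounds directly from the gradient estimates of Lemma \ref{lemma_gradlips} together with a Poincar\'{e}--Wirtinger inequality on the periodicity cell $Y$. First I would observe that, by hypothesis \eqref{0hypo}, the solutions of the cell problems satisfy $\int_Y N^i_j \, \dy = \int_Y M_j \, \dy = 0$ for every choice of the macroscopic parameters; since ${N^i_j}^c(\cdot,p_j) = {N^i_j}(\cdot,p_j) - {N^i_j}(\cdot,p_j')$ and ${M_j^c}(\cdot,\bfa{p}) = {M_j}(\cdot,\bfa{p}) - {M_j}(\cdot,\bfa{p}')$ by the definition \eqref{dc}, linearity of the mean gives $\int_Y {N^i_j}^c(\bfa{y},p_j)\, \dy = 0$ and $\int_Y {M_j^c}(\bfa{y},\bfa{p})\, \dy = 0$. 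In particular both correction terms lie in the zero-mean subspace of $H^1_{\#}(Y)$, which is exactly the setting $\mathcal{V} = H^1_{\#}(Y)/\mathbb{R}$ where the Poincar\'{e} inequality holds.

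Next I would invoke the Poincar\'{e}--Wirtinger inequality on $Y$: there is a constant $C_P > 0$, depending only on $Y$, such that for every $w \in H^1_{\#}(Y)$ with $\int_Y w \, \dy = 0$ one has $\|w\|_{L^2(Y)} \le C_P \|\nabla_y w\|_{\bfa{L}^2(Y)}$. Applying this to $w = {N^i_j}^c(\cdot,p_j)$ and then to $w = {M_j^c}(\cdot,\bfa{p})$ yields
\begin{equation*}
\|{N^i_j}^c(\bfa{y},p_j)\|_{L^2(Y)} \le C_P \|\nabla_y {N^i_j}^c(\bfa{y},p_j)\|_{\bfa{L}^2(Y)}\,, \qquad
\|{M_j^c}(\bfa{y},\bfa{p})\|_{L^2(Y)} \le C_P \|\nabla_y {M_j^c}(\bfa{y},\bfa{p})\|_{\bfa{L}^2(Y)}\,.
\end{equation*}

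Finally I would combine these with Lemma \ref{lemma_gradlips}, which already provides $\|\nabla_y {N^i_j}^c(\bfa{y},p_j)\|_{\bfa{L}^2(Y)} \le C|p_j - p_j'|$ and $\|\nabla_y {M_j^c}(\bfa{y},\bfa{p})\|_{\bfa{L}^2(Y)} \le C|\bfa{p} - \bfa{p}'|$. Absorbing $C_P$ into the constant $C$ gives the claimed estimates $\|{N^i_j}^c(\bfa{y},p_j)\|_{L^2(Y)} \le C|p_j - p_j'|$ and $\|{M_j^c}(\bfa{y},\bfa{p})\|_{L^2(Y)} \le C|\bfa{p} - \bfa{p}'|$, for $i,j = 1,2$. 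There is no real analytic obstacle here; the only point requiring a small amount of care is the zero-mean bookkeeping needed to license the Poincar\'{e} inequality (hence the explicit appeal to \eqref{0hypo}), and this lemma is essentially a corollary of Lemma \ref{lemma_gradlips}, set up so that Lemma \ref{lemma3} can later feed the $L^2$-type terms in the proof of Theorem \ref{maintheorem}.
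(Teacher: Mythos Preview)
Your proposal is correct and matches the paper's own proof essentially line for line: the paper also deduces $\int_Y {N^i_j}^c \, \dy = \int_Y M_j^c \, \dy = 0$ from \eqref{0hypo} and \eqref{dc}, then invokes the Poincar\'{e}--Wirtinger inequality together with Lemma~\ref{lemma_gradlips} to conclude.
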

\begin{proof}
%First, by the Parallelogram Law 
%The sum of the squares of the diagonals of a parallelogram is the sum of the squares of the four sides
%for the Hilbert space $\mathcal{V} = H^1_{\#}(Y) / \mathbb{R}$, 
%https://math.stackexchange.com/questions/2301450/if-h-is-a-hilbert-space-and-m-is-a-closed-subspace-of-h-then-h-m-is-a-h
%we note that 
%\begin{equation}\label{para}
% 2\left(\|{N_1^i}^c\|^2_{L^2(Y)} + \|{N_2^i}^c\|^2_{L^2(Y)}\right) = \|{N_1^i}^c + {N_2^i}^c\|^2_{L^2(Y)} + \|{N_1^i}^c - {N_2^i}^c\|^2_{L^2(Y)}\,. 
%\end{equation}

It follows from Definition \eqref{dc} and Assumption \eqref{0hypo} that
\[\int_Y {N^i_j}^c \, \dy = 
\int_Y M_j^c \, \dy = 0\,.\]
By Poincar\'{e}-Wirtinger inequality and Lemma \ref{lemma_gradlips},
%for vanishing average
%Minkowski's Inequality, 
   %the following inequalities:
%\begin{align*}
%\|{N_1^i}^c \pm {N_2^i}^c\|_{L^2(Y)} 
%&\leq C \|\nabla_y\left({N_1^i}^c \pm {N_2^i}^c\right)\|_{L^2(Y)}
%\leq C\left(\|\nabla_y {N_1^i}^c\|_{L^2(Y)} + \|\nabla_y {N_2^i}^c\|_{L^2(Y)}\right)\\
%&\leq C(|p_1-p_1'|+|p_2-p_2'|) \,.
%\end{align*}
%Then, by \eqref{para}, we get
%\[2\left(\|{N_1^i}^c\|^2_{L^2(Y)} + \|{N_2^i}^c\|^2_{L^2(Y)}\right) \leq C(|p_1-p_1'|+|p_2-p_2'|)^2\leq C(|p_1-p_1'|^2+|p_2-p_2'|^2)\,.\]
there holds Lemma \ref{lemma3}.
\end{proof}
%%%%%%%%%%%%%%%%%%%%%%%%%%%%%%%%%%%%%.    ||N^ic||_{H^2} < C |x - x'|.    %%%%%%%%%%%%%%%%%%%%%%%%%%%%%%%%%%%%
\begin{lemma}
\label{lemma_clips}
There exists a positive constant $C$ such that
\[||{N^{i}_j}^c(\bfa{y},p_j)||_{H^2(Y)} \leq C|p_j-p_j'|\,,\qquad
||M_j^c(\bfa{y},\bfa{p})||_{H^2(Y)}\leq C |\bfa{p}-\bfa{p}'|\,.\]
 %for $p_j, p_j' \in U_{1,L}$ and $\bfa{p} = (p_1,p_2), \bfa{p}'=(p'_1,p'_2) \in \bfa{U}_{2,L}\,,$ with $i,j=1,2\,.$
\end{lemma}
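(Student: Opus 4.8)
The plan is to decompose the $H^2(Y)$ norm of each continuous correction term into its three natural pieces — the $L^2$ norm of the function, the $L^2$ norm of its gradient, and the $L^2$ norm of its Hessian — and to bound each piece by one of the lemmas already established. Since ${N^i_j}^c(\cdot,p_j)$ and $M_j^c(\cdot,\bfa{p})$ lie in $\mathcal{U} = H^2_{\#}(Y)$, the quantity $\|{N^i_j}^c\|_{H^2(Y)}^2 = \|{N^i_j}^c\|_{L^2(Y)}^2 + \|\nabla_y {N^i_j}^c\|_{\bfa{L}^2(Y)}^2 + \|D^2_y {N^i_j}^c\|_{L^2(Y)}^2$ is well defined, and the first two summands are controlled directly: Lemma \ref{lemma3} gives $\|{N^i_j}^c\|_{L^2(Y)} \le C|p_j-p_j'|$ and $\|M_j^c\|_{L^2(Y)} \le C|\bfa{p}-\bfa{p}'|$, while Lemma \ref{lemma_gradlips} gives the matching bounds for $\|\nabla_y {N^i_j}^c\|_{\bfa{L}^2(Y)}$ and $\|\nabla_y M_j^c\|_{\bfa{L}^2(Y)}$.

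The only point requiring a genuine argument is the control of the full Hessian by the Laplacian, i.e.\ bounding $\|D^2_y {N^i_j}^c\|_{L^2(Y)}$ in terms of $\|\Delta_y {N^i_j}^c\|_{L^2(Y)}$, which is precisely what Lemma \ref{lemma_trilips} estimates. Here I would exploit the periodic setting: as recorded in the proof of Lemma \ref{lemma3}, it follows from Definition \eqref{dc} and Assumption \eqref{0hypo} that ${N^i_j}^c$ and $M_j^c$ have zero mean over the unit torus $Y$, so expanding $u \in H^2_{\#}(Y)$ in a Fourier series on $Y$ and applying Plancherel's identity yields $\|D^2_y u\|_{L^2(Y)} = \|\Delta_y u\|_{L^2(Y)}$, since $\sum_{i,m} k_i^2 k_m^2 = \big(\sum_i k_i^2\big)^2 = |k|^4$ coefficient by coefficient; alternatively one may simply cite the standard $H^2$ elliptic regularity estimate on the torus. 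Combining this with Lemma \ref{lemma_trilips} gives $\|D^2_y {N^i_j}^c\|_{L^2(Y)} \le C|p_j-p_j'|$ and $\|D^2_y M_j^c\|_{L^2(Y)} \le C|\bfa{p}-\bfa{p}'|$.

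Finally I would add the three estimates and take square roots, obtaining $\|{N^i_j}^c(\bfa{y},p_j)\|_{H^2(Y)} \le C|p_j-p_j'|$ and $\|M_j^c(\bfa{y},\bfa{p})\|_{H^2(Y)} \le C|\bfa{p}-\bfa{p}'|$, as claimed. The main (and essentially only) obstacle is the Hessian-versus-Laplacian comparison, and it is routine once periodicity and zero mean are invoked; no coercivity or Lipschitz input beyond what is already contained in Lemmas \ref{lemma_gradlips}–\ref{lemma_trilips} is needed.
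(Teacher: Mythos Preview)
Your proposal is correct and reaches the same conclusion from the same input lemmas (Lemmas \ref{lemma_gradlips}, \ref{lemma_trilips}, \ref{lemma3}), but the route you take for the Hessian bound differs from the paper's. The paper does not argue directly on the torus via Fourier series; instead it embeds $Y$ in a slightly larger domain $\omega$, multiplies ${N^i_j}^c$ by a smooth cutoff $\phi\in C_0^\infty(\omega)$ with $\phi\equiv 1$ on $Y$, applies the product rule $\Delta_y(\phi\,{N^i_j}^c)=(\Delta_y\phi){N^i_j}^c+2\nabla_y\phi\cdot\nabla_y{N^i_j}^c+\phi\,\Delta_y{N^i_j}^c$, and then invokes the boundary $H^2$-elliptic regularity estimate $\|\phi\,{N^i_j}^c\|_{H^2(\omega)}\le C\|\Delta_y(\phi\,{N^i_j}^c)\|_{L^2(\omega)}$ together with periodicity to transfer the $L^2$, gradient, and Laplacian bounds from $Y$ to $\omega$. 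Your Plancherel argument on the periodic cell is more direct and in fact yields the sharp identity $\|D^2_y u\|_{L^2(Y)}=\|\Delta_y u\|_{L^2(Y)}$ for $u\in H^2_\#(Y)$ (and this identity does not even require the zero-mean hypothesis you mention, since the zero Fourier mode contributes nothing to either side). The paper's cutoff-plus-regularity argument is more portable to non-periodic settings, whereas your approach is tailored to, and cleaner in, the periodic framework actually in play here.
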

\begin{proof}
%\todo{elliptic regularity (theorem)}
%``elliptic regularity'' in Evan 6.3.2 boundary regularity (Boundary H^2-regularity)
%
%https://en.wikipedia.org/wiki/Elliptic_operator#Elliptic_regularity_theorem
%https://math.stackexchange.com/questions/308897/elliptic-regularity-theorem
%
% it says that for any PDE satisfying certain conditions, there are regularity properties of the solution function which depend naturally on the regularity properties of the forcing function: https://link.springer.com/article/10.1007/s40314-018-0618-2#:~:text=The%20elliptic%20regularity%20theorem%20is,properties%20of%20the%20forcing%20function.

%

%Recall that $\mathcal{U} = H^2_{\#}(Y) \subset \mathcal{V}=H^1_{\#}(Y) / \mathbb{R}\,.$ 
Let $\omega \subset \mathbb{R}^2$ be a region such that $Y \subset \omega\,.$ 
Let $\phi \in {C}^\infty_0(\omega)$ 
%differentiable for all degrees of differentiation, and vanish at infinity (\forall \e, the set of y st \phi(y) \geq \e is compact)
satisfy $\phi = 1$ in $Y\,.$  Recalling Definition \eqref{dc} for the continuous ${N^i_j}^c$ and their spaces, we have
\begin{equation}
\label{eq:main34}
\begin{split}
\Delta_y(\phi {N^i_j}^c) = (\Delta_y \phi) {N^i_j}^c + 2 (\nabla_y \phi) \cdot (\nabla_y {N^i_j}^c) + \phi \Delta_y {N^i_j}^c\,.
\end{split}
\end{equation}
%Note that \phi {N^i_1}^c is a solution of eqref{eq:cell} in Y
Since $\phi {N^i_j}^c = 0$ on $\partial \omega$, one can apply the elliptic regularity theorem (the boundary $H^2$-regularity, see \cite{evans}, for instance) to get $\phi {N^i_j}^c \in H^2(\omega)$.  We thus have the estimate 
\begin{equation}
\label{eq:main35}
\begin{split}
||{N^i_j}^c||_{H^2(Y)} \leq ||\phi {N^i_j}^c||_{H^2(\omega)} \leq ||\Delta_y(\phi {N^i_j}^c)||_{L^2(\omega)}\,.
% = ||(\Delta_y \phi) {N^i_1}^c + 2 (\nabla_y\phi) \cdot (\nabla_y {N^i_1}^c) + \phi \Delta_y {N^i_1}^c||_{L^2(\omega)}
\end{split}
\end{equation}
Therefore,
\begin{equation}
\label{eq:main36}
\begin{split}
&||{N^i_j}^c||_{H^2(Y)} \leq ||(\Delta_y \phi) {N^i_j}^c||_{L^2(\omega)} + || 2 (\nabla_y\phi) \cdot (\nabla_y {N^i_j}^c)||_{L^2(\omega)} + ||\phi \Delta_y {N^i_j}^c||_{L^2(\omega)}\,.\\
\end{split}
\end{equation}
%Similarly,
%\begin{equation}
%\label{eq:main36b}
%\begin{split}
%&||M_j^c||_{H^2(Y)} \leq ||(\Delta_y \phi) M_j^c||_{L^2(\omega)} + || 2 (\nabla_y\phi) \cdot (\nabla_y M_j^c)||_{L^2(\omega)} + ||\phi \Delta_y M_j^c||_{L^2(\omega)}\,.
%\end{split}
%\end{equation}
Now, since $\phi$ is smooth and compactly supported, it follows that
\begin{equation}
\label{eq:main36'}
|\Delta_y \phi|, \ \|\nabla_y \phi\|_{\bfa{L}^2(Y)}, \ |\phi| \leq C\,. 
\end{equation}
By Lemmas \ref{lemma_gradlips}, \ref{lemma_trilips} and \ref{lemma3}, we get
\begin{equation}
\label{eq:main37}
\begin{split}
&||{N^i_j}^c||_{L^2(Y)}, \ ||\nabla_y {N^i_j}^c||_{\bfa{L}^2(Y)}, \ ||\Delta_y {N^i_j}^c||_{L^2(Y)} \leq C|p_j-p_j'|\,.
%
%||M_j^c||_{L^2(Y)}, ||\nabla_y M_j^c||_{\bfa{L}^2(Y)}, ||\Delta_y M_j^c||_{L^2(Y)} \leq C|\bfa{p}-\bfa{p}'|\,.
\end{split}
\end{equation}
Due to the $Y$-periodicity of ${N^i_j}^c$, ${M_j^c}$ in the domain $\omega$, we thus deduce from \eqref{eq:main37} that
\begin{equation}
\label{eq:main38}
\begin{split}
&||{N^i_j}^c||_{L^2(\omega)}, \ ||\nabla_y {N^i_j}^c||_{\bfa{L}^2(\omega)}, \ ||\Delta_y {N^i_j}^c||_{L^2(\omega)} \leq C_1|p_j-p_j'|\,.
%||M_j^c||_{L^2(\omega)}, ||\nabla_y M_j^c||_{\bfa{L}^2(\omega)}, ||\Delta_y M_j^c||_{L^2(\omega)} \leq C_1|\bfa{p}-\bfa{p}'|\,.
\end{split}
\end{equation}
Then by (\ref{eq:main36'}) and (\ref{eq:main38}), we can easily see from (\ref{eq:main36}) that for some $C > 0\,,$ 
\begin{equation}
\label{eq:main38'}
||{N^i_j}^c||_{H^2(Y)} \leq C|p_j-p_j'|\,.
\end{equation}

Similarly, $||M_j^c||_{H^2(Y)} \leq C|\bfa{p}-\bfa{p}'|\,.$
%Similarly,
%\begin{equation}
%\label{eq:main38''}
 %||{N^i_2}^c||_{H^2(Y)} \leq C|u-u'|, \quad
 %||M_2^c||_{H^2(Y)} \leq C|\bfa{p}-\bfa{p}'|\,,
% \end{equation}
\end{proof}
%%%%%
%\begin{comment}\label{prepre}
% Let us define ${\Bar{\Bar{N^i_k}}}^c(y,s)$ and ${\Bar{\Bar{M}}}_k^c(y,\bfa{s})$, $k = 1, 2$ to be the solutions of
%\begin{equation}
%\label{eq:main29'} 
%\begin{split}
%&\int_Y k_k(\bfa{y},s) \nabla_y {\Bar{\Bar{N^i_k}}}^c(\bfa{y},s) \cdot  \nabla_y \phi_k(\bfa{y})\mathrm{d}\bfa{y}
% \\&=  -\int_Y k_k(\bfa{y},s) \nabla_y N^i_k(\bfa{y},s') \cdot \nabla_y  \phi_k(\bfa{y})\mathrm{d}\bfa{y}
%-\int_Y k_k(\bfa{y},s) e^i \cdot \nabla_y  \phi_k(\bfa{y})\mathrm{d}y\\
%&\int_Y k_k(\bfa{y},s) \nabla_y {\Bar{\Bar{M}}}_k^c(\bfa{y},\bfa{s}) \cdot  \nabla_y  \phi_k(\bfa{y})\mathrm{d}\bfa{y}\\ 
%&=  -\int_Y k_k(\bfa{y},s) \nabla_y M_k(\bfa{y},\bfa{s}') \cdot \nabla_y \phi_k(\bfa{y})\mathrm{d}\bfa{y}
%-\int_Y Q(\bfa{y},\bfa{s}) \phi_k(\bfa{y})\mathrm{d}\bfa{y}\,,
%\end{split}
%\end{equation}
%where $\forall$ $\phi_k(y)$ $\in$ $\mathcal{V}_{L-l}$, which is defined as a space in some hierarchy of FE spaces, the integer index $L-l$ denotes the resolution level, as in \cite{rh1}.  Here \todo{why variable $(x,y)$ not $(y,s_1)$?} 
For the proofs of the next lemmas and thus for proving the main Theorem \ref{maintheorem}, we consider the following problems:
%dbbc
%for $i,j = 1,2$, $l \geq 2$, $p_i \in %\mathcal{S}^l_1 \subset 
%U_{1,L}$, $\bfa{p} \in 
%\mathcal{\bfa{S}}_2^l \subset 
%\bfa{U}_{2,L}$, 
find $ {\Bar{\Bar{N^i_j}}}^c(\bfa{y},p_j)$, ${\Bar{\Bar{M_j^c}}}(\bfa{y},\bfa{p}) \in \mathcal{V}_{L+1-l}$ such that 
\begin{equation*}
\label{eq:main53} 
\begin{split}
\int_Y &k_j(\bfa{y},p_j) \nabla_y {\Bar{\Bar{N^i_j}}}^c(\bfa{y},p_j) \cdot  \nabla_y \phi_j(\bfa{y})\, \dy\\ = & 
-\int_Y (k_j(\bfa{y},p_j) - k_j(\bfa{y},p_j'))\nabla_y N^i_j(\bfa{y},p_j') \cdot \nabla_y \phi_j(\bfa{y})\, \dy\\
& -\int_Y (k_j(\bfa{y},p_j) - k_j(\bfa{y},p_j')) \bfa{e}^i \cdot \nabla_y \phi_j(\bfa{y})\, \dy\,,
\end{split}
\end{equation*}
\begin{align}
\label{eq:main53'} 
\begin{split}
\int_Y &k_j(\bfa{y},p_j) \nabla_y {\Bar{\Bar{M_j^c}}}(\bfa{y},\bfa{p}) \cdot  \nabla_y \psi_j(\bfa{y})\, \dy\\ = & 
-\int_Y (k_j(\bfa{y},p_j) - k_j(\bfa{y},p'_j))\nabla_y M_j(\bfa{y},\bfa{p}') \cdot \nabla_y \psi_j(\bfa{y})\, \dy\\
& -\int_Y (Q_j(\bfa{y},\bfa{p})-Q_j(\bfa{y},\bfa{p}'))\psi_j(\bfa{y}) \, \dy\,,
\end{split}
\end{align}
for all $\phi_j, \psi_j \in \mathcal{W}_{L+1-l}\,.$
%which is defined as a space in some hierarchy of FE spaces, the integer index $L-l$ denotes the resolution level, as in \cite{rh1}
Note that this system \eqref{eq:main53'} is obtained by replacing the continuous $ {N^i_j}^c(\cdot,p_j), \ {M_j^c}(\cdot,\bfa{p}) \in \mathcal{V}$ in \eqref{eq:main52dc}--\eqref{eq:main52'dc} with $ {\Bar{\Bar{N^i_j}}}^c(\bfa{y},p_j), \ {\Bar{\Bar{M_j^c}}}(\bfa{y},\bfa{p}) \in \mathcal{V}_{L+1-l}\,.$ 
\begin{lemma}
\label{theorem5}
There exist some positive constants $C_1,C_2$ such that
\begin{equation}
\label{eq:main44''} 
\begin{split}
||\nabla_y{N^{i}_j}^c(\bfa{y},p_j) - \nabla_y{\Bar{\Bar{N^i_j}}}^c(\bfa{y},p_j)||_{\bfa{L}^2(Y)} &\leq C_1 2^{-L},\\
||\nabla_y{M_j^c}(\bfa{y},\bfa{p}) - \nabla_y{\Bar{\Bar{M_j^c}}}(\bfa{y},\bfa{p})||_{\bfa{L}^2(Y)} &\leq C_2 2^{-L}\,.
\end{split}
\end{equation}
%for all $p_j \in U_{1,L}$, $\bfa{p}=(p_1,p_2) \in \bfa{U}_{2,L}$, where $i,j = 1,2\,.$
\end{lemma}
\begin{proof}
It follows respectively from C\'{e}a's lemma \cite{reg-fem}, 
%\|u-u_h\|\le \frac{\gamma}{\alpha}\|u-v\| for all {\displaystyle v}v in {\displaystyle V_{h}.}V_h.
%https://en.wikipedia.org/wiki/C%C3%A9a%27s_lemma
Definition (\ref{dc}) together with Assumption (\ref{eq:FEapprox}), and Lemma \ref{lemma_clips} that
%\cite{brown13} (12) p37
\begin{align}
\label{eq:main44} 
\begin{split}
||\nabla_y{N^{i}_j}^c(\bfa{y},p_j) - \nabla_y{\Bar{\Bar{N^i_j}}}^c(\bfa{y},p_j)||_{\bfa{L}^2(Y)} 
& \leq C \inf_{\phi\in{\mathcal V}_{L-l+1}}\|\nabla_y({N^{i}_j}^c-\phi)\|_{\bfa{L}^2(Y)} \\
&\leq C 2^{-(L-l+1)} 
%\cite{brown13} p42, M 1 2^{−L}, C \kappa^i h = C 2^{-(L-l+1)}, 
%\kappa=2, i=l-1, h=2^{L}
%2^{l} 2^{-L-1}? or C 2^{l-1} 2^{-L}
||{N^{i}_j}^c||_{H^2(Y)} \\
& \leq C 2^{-(L-l+1)} |p_j-p_j'| 
\leq C_1 2^{-L}\,.
\end{split}
\end{align}
%
\begin{comment}
\begin{align}\label{eq:main44} 
\begin{split}
||\nabla_y{M}_j^c(\bfa{y},\bfa{p}) - \nabla_y{\Bar{\Bar{M_j^c}}}(\bfa{y},\bfa{p})||_{\bfa{L}^2(Y)} 
& \leq C \inf_{\phi\in{\mathcal V}_{L-l+1}}\|\nabla_y({M_j^c} - \phi)\|_{\bfa{L}^2(Y)}\\
&\leq C 2^{- (L-l+1)}  ||{M_j^c}||_{H^2(Y)}\\
&\leq C 2^{-(L-l+1)} |\bfa{p}-\bfa{p}'|\leq C_2 2^{-L}\,.
\end{split}
\end{align}
\end{comment}
The last inequalities of \eqref{eq:main44} follow from \eqref{dS}, \eqref{dU} as well as 
\begin{equation}\label{sdis}
|p_j-p_j'|, |\bfa{p} - \bfa{p}'| \leq \sqrt{2} \cdot 2^{-l+1}
\end{equation}
(see \cite{brown13}), where the factor $\sqrt{2}$ is absorbed into $C_1\,.$

Similarly, $||\nabla_y{M_j^c}(\bfa{y},\bfa{p}) - \nabla_y{\Bar{\Bar{M_j^c}}}(\bfa{y},\bfa{p})||_{\bfa{L}^2(Y)} \leq C_2 2^{-L}\,.$
%|s - s'| = 2^{1-l}
%where $h$ is corresponding mesh size of $\mathcal{V}_{L+1-l}$.
\end{proof}
%%%%%%%%%%%%%%%%%%%%%%%%%%%%%%%%%%%     || {N} - \Bar{N} ||     %%%%%%%%%%%%%%%%%%%%%%%%%%%%%%%%%%%%%
\begin{comment}
Recall that we have the following equations (from (\ref{eq:main52'dbc}) for ${\bar{N^{i}_j}}^c(\bfa{y},p_j)$, ${\bar{M_j^c}}(\bfa{y},\bfa{p}) \in {\mathcal V}_{L+1-l}$): 
\begin{equation*}
\label{eq:main52dbcr} 
\begin{split}
\int_Y &k_j(\bfa{y},p_j) \nabla_y {\bar{N^{i}_j}}^c(\bfa{y},p_j) \cdot  \nabla_y \phi_j(\bfa{y}) \, \dy\\ = & 
-\int_Y (k_j(\bfa{y},p_j) - k_j(\bfa{y},p_j'))\nabla_y \bar{N^{i}_j}(\bfa{y},p_j') \cdot \nabla_y \phi_j(\bfa{y}) \, \dy\\
&-\int_Y (k_j(\bfa{y},p_j) - k_j(\bfa{y},p_j')) \bfa{e}^i \cdot \nabla_y \phi_j(\bfa{y}) \, \dy\,,
\end{split}
\end{equation*}
\begin{equation}
\label{eq:main52'dbcr} 
\begin{split}
\int_Y &k_j(\bfa{y},p_j) \nabla_y {\bar{M_j^c}}(\bfa{y},\bfa{p}) \cdot  \nabla_y \psi_j(\bfa{y}) \, \dy\\ = & 
-\int_Y (k_j(\bfa{y},p_j) - k_j(\bfa{y},p'_j))\nabla_y \bar{M_j}(\bfa{y},\bfa{p}') \cdot \nabla_y \psi_j(\bfa{y}) \, \dy\\ 
&-\int_Y (Q_j(\bfa{y},\bfa{p})-Q_j(\bfa{y},\bfa{p}'))\psi_j(\bfa{y}) \, \dy\,,
\end{split}
\end{equation}
for all $\phi_j,\psi_j \in \mathcal{W}_{L+1-l}$, where $j = 1,2$.
\end{comment}

Now, one subtracts (\ref{eq:main53'}) from (\ref{eq:main52'dbc}), and let $\phi_j =  \bar{N^{i}_j}^c(\bfa{y},p_j)-{\Bar{\Bar{N^{i}_j}}}^c(\bfa{y},p_j)$,\  $\psi_j =  \bar{M}_j^c(\bfa{y},\bfa{p})-{\Bar{\Bar{M_j^c}}}(\bfa{y},\bfa{p})$ to obtain
\begin{equation*}
\label{eq:main54} 
\begin{split}
\int_Y &k_j(\bfa{y},p_j) \nabla_y (\bar{N^{i}_j}^c(\bfa{y},p_j)-{\Bar{\Bar{N^{i}_j}}}^c(\bfa{y},p_j)) \cdot  \nabla_y (\bar{N^{i}_j}^c(\bfa{y},p_j)-{\Bar{\Bar{N^{i}_j}}}^c(\bfa{y},p_j))\, \dy
\\ = & 
-\int_Y (k_j(\bfa{y},p_j) - k_j(\bfa{y},p_j'))\nabla_y (\bar{N^{i}_j}(\bfa{y},p_j')-N^i_j(\bfa{y},p_j')) \cdot \nabla_y (\bar{N^{i}_j}^c(\bfa{y},p_j)-{\Bar{\Bar{N^{i}_j}}}^c(\bfa{y},p_j))\, \dy\,,
%-\int_Y (k_1(y,s_1) - k_1(y,s_1')) e^i \cdot \nabla_y \phi_1(y)
\end{split}
\end{equation*}

\begin{equation}
\label{eq:main54'} 
\begin{split}
\int_Y &k_j(\bfa{y},p_j) \nabla_y ({\bar{M_j^c}}(\bfa{y},\bfa{p})-{\Bar{\Bar{M_j^c}}}(\bfa{y},\bfa{p})) \cdot  \nabla_y (\bar{M_j^c}(\bfa{y},\bfa{p})-{\Bar{\Bar{M_j^c}}}(\bfa{y},\bfa{p}))\, \dy\\
 = & 
-\int_Y (k_j(\bfa{y},p_j) - k_j(\bfa{y},p'_j))\nabla_y (\bar{M_j}(\bfa{y},\bfa{p}')-M_j(\bfa{y},\bfa{p}')) \cdot \nabla_y (\bar{M_j^c}(\bfa{y},\bfa{p})-{\Bar{\Bar{M_j^c}}}(\bfa{y},\bfa{p}))\, \dy\,.
%-\int_Y (k_1(y,s_1) - k_1(y,s_1')) e^i \cdot \nabla_y \phi_1(y)
\end{split}
\end{equation}

Recall that $i,j=1,2\,, l=\overline{1,L}\,;$ $N^i_j, M_j \in \mathcal{U} = H^2_{\#}(Y) \subset \mathcal{V}=H^1_{\#}(Y) / \mathbb{R}$ (defined in (\ref{fesps})); $\bar{N}^i_j, \bar{M}_j \in \mathcal{V}_{L-l+1}$ (defined in \eqref{eq:main11db} and \eqref{eq:main25''}). We are ready to prove the following lemma.  
\begin{lemma}
\label{theorem4}
There exist some positive constants $C_{1,l}\,, \, C_{2,l}$ which only depend on the coefficients $k_1,k_2$ (so, the cell operator of \eqref{eq:cell}) as well as on the respectively level $\mathcal{S}_1^l$ of $p_j$ in $U_{1,L}$ and $\bfa{\mathcal{S}}_2^l$ of $\bfa{p}=(p_1,p_2)$ in $\bfa{U}_{2,L}$ such that for $i,j=1,2\,, l = \overline{1,L}\,,$ we have
\begin{equation}
\label{eq:main51'} 
\begin{split}
&||\nabla_y\bar{N^{i}_j}(\bfa{y}, p_j)-\nabla_yN^i_j(\bfa{y}, p_j)||_{\bfa{L}^2(Y)} \leq C_{1,l} 2^{-L},\\
&||\nabla_y\bar{M_j}(\bfa{y},\bfa{p})-\nabla_yM_j(\bfa{y},\bfa{p})||_{\bfa{L}^2(Y)} \leq C_{2,l} 2^{-L}\,.\\
\end{split}
\end{equation}
\end{lemma}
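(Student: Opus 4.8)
The plan is to prove Lemma \ref{theorem4} by induction on the hierarchical level $l$ of the macroscopic point ($p_j \in \mathcal{S}_1^l \subset U_{1,L}$, respectively $\bfa{p} = (p_1,p_2) \in \bfa{\mathcal{S}}_2^l \subset \bfa{U}_{2,L}$), treating $N^i_j$ and $M_j$ in parallel since they solve structurally identical cell problems. For the base case $l=1$ (the anchor points in $\mathcal{S}_1^1$, $\bfa{\mathcal{S}}_2^1$), the quantities $\bar{N}^i_j(\cdot,p_j)$ and $\bar{M}_j(\cdot,\bfa{p})$ are genuine Galerkin approximations in the finest space $\mathcal{V}_L$ determined by \eqref{eq:main11db}, so I would invoke C\'ea's lemma together with the approximation property \eqref{eq:FEapprox1} specialized to $l=1$ (i.e.\ $\mathcal{V}_{L-l+1}=\mathcal{V}_L$) and the $H^2$-bounds $\|N^i_j\|_{H^2(Y)},\|M_j\|_{H^2(Y)}\le C$ of Lemma \ref{lemma2}; this gives $\|\nabla_y\bar{N}^i_j-\nabla_yN^i_j\|_{\bfa{L}^2(Y)}\le C_{1,1}2^{-L}$ and $\|\nabla_y\bar{M}_j-\nabla_yM_j\|_{\bfa{L}^2(Y)}\le C_{2,1}2^{-L}$ with constants depending only on $k_1,k_2$.

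For the inductive step, fix $l\ge 2$, assume the estimate at all lower levels, and pick the nearby coarser-level point $p_j'$ (respectively $\bfa{p}'$) used in the construction \eqref{eq:main25''}. Combining \eqref{eq:main25''} with the definition \eqref{dc} of the continuous correction term and inserting the auxiliary approximation ${\Bar{\Bar{N^i_j}}}^c\in\mathcal{V}_{L+1-l}$ from \eqref{eq:main53'}, I would split
\[
\bar{N}^i_j(\cdot,p_j) - N^i_j(\cdot,p_j) = \bigl({\bar{N^i_j}}^c - {\Bar{\Bar{N^i_j}}}^c\bigr) + \bigl({\Bar{\Bar{N^i_j}}}^c - {N^i_j}^c\bigr) + \bigl(\bar{N}^i_j(\cdot,p_j') - N^i_j(\cdot,p_j')\bigr).
\]
The gradient $\bfa{L}^2(Y)$-norm of the middle term is bounded by $C2^{-L}$ via Lemma \ref{theorem5}; the last term is bounded by the induction hypothesis applied at the level $l'<l$ of $p_j'$. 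For the first term I would subtract \eqref{eq:main53'} from \eqref{eq:main52'dbc} (in its $N$-version), test with the admissible function ${\bar{N^i_j}}^c - {\Bar{\Bar{N^i_j}}}^c\in\mathcal{W}_{L+1-l}$ — identifying $\mathcal{V}_{L+1-l}$ with its zero-mean representatives in $\mathcal{W}_{L+1-l}$ — and use coercivity \eqref{Coercivity}, the Lipschitz continuity \eqref{lipschitz} of $k_j$, Cauchy--Schwarz, the distance estimate \eqref{sdis} $|p_j-p_j'|\le\sqrt2\,2^{1-l}$, and once more the induction hypothesis, to obtain a bound of the form $C\,2^{1-l}\,C_{1,l'}\,2^{-L}$. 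Adding the three contributions yields $\|\nabla_y(\bar{N}^i_j(\cdot,p_j)-N^i_j(\cdot,p_j))\|_{\bfa{L}^2(Y)}\le C_{1,l}2^{-L}$, where $C_{1,l}$ is defined by a recursion in the lower-level constants of the form $C_{1,l}=C_1+(1+C\sqrt2\,2^{1-l})\max_{l'<l}C_{1,l'}$; since $\sum_l 2^{1-l}<\infty$ this recursion grows at most linearly in $l$ and depends only on $k_1,k_2$ (and on $l$). The identical argument using \eqref{eq:main54'}, the bound $|\bfa{p}-\bfa{p}'|\le\sqrt2\,2^{1-l}$, and Lemma \ref{theorem5} produces $C_{2,l}$.

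I expect the main obstacle to be the energy estimate for ${\bar{N^i_j}}^c - {\Bar{\Bar{N^i_j}}}^c$ (and its $M_j$ analogue): one must recognize that the difference of the two discrete problems \eqref{eq:main52'dbc} and \eqref{eq:main53'} is driven \emph{exactly} by the coarser-level error $\nabla_y(\bar{N}^i_j(\cdot,p_j')-N^i_j(\cdot,p_j'))$, so that the small factor $|p_j-p_j'|=\mathcal{O}(2^{-l})$ multiplies a quantity that is already $\mathcal{O}(2^{-L})$ by the induction hypothesis rather than $\mathcal{O}(1)$. Keeping this bookkeeping honest is what prevents the recursion for $C_{1,l},C_{2,l}$ from growing faster than linearly in $l$, which is the sharpened form ultimately fed into Theorem \ref{maintheorem}; a cruder accounting would still yield constants depending only on $l$, which already suffices for the present lemma. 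A secondary technical point to verify is that ${\bar{N^i_j}}^c - {\Bar{\Bar{N^i_j}}}^c$ and ${\bar{M_j^c}} - {\Bar{\Bar{M_j^c}}}$ are legitimate test functions in $\mathcal{W}_{L+1-l}$, which is immediate once $\mathcal{V}_{L+1-l}$ is regarded as a subspace of $\mathcal{W}_{L+1-l}$, consistently with the derivation of \eqref{eq:main54'}.
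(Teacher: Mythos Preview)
Your proposal is correct and follows essentially the same route as the paper's proof: induction on the hierarchical level $l$, base case via the standard Galerkin error estimate in $\mathcal{V}_L$, and the inductive step via the same three-term splitting $(\bar{N^i_j}^c-{\Bar{\Bar{N^i_j}}}^c)+({\Bar{\Bar{N^i_j}}}^c-{N^i_j}^c)+(\bar{N}^i_j(\cdot,p_j')-N^i_j(\cdot,p_j'))$, handled respectively by the energy identity \eqref{eq:main54'} (coercivity, Lipschitz, \eqref{sdis}, induction hypothesis), by Lemma \ref{theorem5}, and again by the induction hypothesis. Your recursion $C_{1,l}=C_1+(1+C\sqrt{2}\,2^{1-l})\max_{l'<l}C_{1,l'}$ coincides with the paper's $C_{1,l}=C_1+\gamma C_{1,l-1}2^{-l+1}+C_{1,l-1}$ once one notes the constants are nondecreasing in $l$.
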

\begin{proof}
We prove this Lemma by induction. The conclusion \eqref{eq:main51'} obviously holds for $l = 1\,.$  
With $l\geq 2$, assuming that for all $p_j' \in \mathcal{S}_1^{l'}$ and $\bfa{p_j'}=(p_1',p_2') \in \bfa{\mathcal{S}}_2^{l'}$ where $l' \leq l-1$ (so 
%assume true until k=l-1, wts true for k=l
$\mathcal{V}_{L-l+1} \subset \mathcal{V}_{L-l'+1}$, in which the later FE space is finer as well as has higher accuracy, and noting that $\bar{N^{i}_j}(\bfa{y},p_j') \in \mathcal{V}_{L-l'+1}$), the conclusion \eqref{eq:main51'} holds for $l' \leq l-1$, that is,
\begin{equation}
\label{eq:main55'} 
\begin{split}
||\nabla_y\bar{N^{i}_j}(\bfa{y},p_j')-\nabla_yN^i_j(\bfa{y},p_j')||_{\bfa{L}^2(Y)} \leq C_{1,l'}\,  2^{-L} \leq C_{1,l-1} \, 2^{-L}\,.
%||\nabla_y\bar{M_j}(\bfa{y},\bfa{p}')-\nabla_yM_j(\bfa{y},\bfa{p}')||_{\bfa{L}^2(Y)} \leq C_{2,l'} \, 2^{-L} \leq C_{2,l-1} \, 2^{-L}\,.
\end{split}
\end{equation}
%And this means that 
%\begin{equation}
%\label{eq:main55''} 
%\begin{split}
%&||\nabla_y ({\bar{N^{i}_k}}(x',y)-N^i_k(x',y))||_{L^2(Y)} \leq C_{1l} 2^{-L}\\
%&||\nabla_y ({\bar{M_k}}(x',y)-M_k(x',y))||_{L^2(Y)} \leq C_{2l} 2^{-L}
%\end{split}
%\end{equation}
%and
%\begin{equation}
%\label{eq:main55'''} 
%\begin{split}
%&||{\bar{N^{i}_k}}(x',y)-N^i_k(x',y)||_{L^2(Y)} \leq C_{1l} 2^{-L}\\
%&||{\bar{M_k}}(x',y)-M_k(x',y)||_{L^2(Y)} \leq C_{2l} 2^{-L}
%\end{split}
%\end{equation}
We will prove that \eqref{eq:main51'} holds for $l\,.$
Using this system of inequalities (\ref{eq:main55'}), Assumptions (\ref{Lipschitz}) and  (\ref{Coercivity}), and $|p_j-p_j'|, |\bfa{p} - \bfa{p}'| \leq \sqrt{2} \cdot 2^{-l+1}$ in \eqref{sdis}, it is not difficult to show from the system (\ref{eq:main54'}) that
\begin{equation}
\label{eq:main56} 
\begin{split}
||\nabla_y ({\bar{N^{i}_j}}^c(\bfa{y},p_j)-{\Bar{\Bar{N^i_j}}}^c(\bfa{y},p_j))||_{\bfa{L}^2(Y)} \leq \gamma C_{1,l-1} 2^{(-L-l+1)}\,,\\
%
%||\nabla_y ({\bar{M_j^c}}(\bfa{y},\bfa{p})-{\Bar{\Bar{M_j^c}}}(\bfa{y},\bfa{p}))||_{\bfa{L}^2(Y)} \leq \gamma C_{2,l-1} 2^{(-L-l+1)}\,,
\end{split}
\end{equation}
%and 
%\begin{equation}
%\label{eq:main56'} 
%\begin{split}
%&||\bar{N^{i}_k}^c(x,y)-{\Bar{\Bar{N^i_k}}}^c(x,y)||_{L^2(Y)} \leq \gamma C_{1l} 2^{(-L-l)}\\
%&||\bar{M_k}^c(x,y)-{\Bar{\Bar{M_k}}}^c(x,y)||_{L^2(Y)} \leq \gamma C_{2l} 2^{(-L-l)}.
%\end{split}
%\end{equation}
where $\gamma > 0$ is independent of $p_j\,, \bfa{p}\,, l\,,$ and the nested FE spaces, 
%\cite{brown13} (23)
while the factor $\sqrt{2}$ in \eqref{sdis} is absorbed into $\gamma$. 
%$s \in \mathcal{S}_1^l$, $s' \in \mathcal{S}_1^{l'}$, $\bfa{s} \in \mathcal{S}_2^l$, and $\bfa{s}' \in \mathcal{S}_2^{l'}$  for $l' < l$. 
%That is,
%\begin{equation}
%\label{eq:main56} 
%\begin{split}
%&||{\bar{N^{i}_k}}^c(x,y)-{\Bar{\Bar{N^i_k}}}^c(x,y))||_{H^1(Y)} \leq \gamma C_{1l} 2^{(-L-l)}\\
%&||{\bar{M_k}}^c(x,y)-{\Bar{\Bar{M_k}}}^c(x,y))||_{H^1(Y)} \leq \gamma C_{2l} 2^{(-L-l)}
%\end{split}
%\end{equation}
By Lemma \ref{theorem5} and (\ref{eq:main56}), we get 
\begin{equation}\label{eq:main57}
\begin{split}
&|| \nabla_y{N^{i}_j}^c(\bfa{y},p_j) - \nabla_y{\Bar{N^i_j}}^c(\bfa{y},p_j)||_{\bfa{L}^2(Y)} \\ &\leq ||\nabla_y{N^{i}_j}^c(\bfa{y},p_j) - \nabla_y{\Bar{\Bar{N^i_j}}}^c(\bfa{y},p_j)||_{\bfa{L}^2(Y)}
 + ||\nabla_y{\Bar{{N^{i}_j}}}^c(\bfa{y},p_j) - \nabla_y{\Bar{\Bar{N^i_j}}}^c(\bfa{y},p_j)||_{\bfa{L}^2(Y)} \\
 & \leq C_{1} 2^{-L} + \gamma C_{1,l-1} 2^{(-L-l+1)}\,,
 \end{split}
 \end{equation}
 %
 \begin{comment}
 \begin{equation}\label{eq:main57} 
 \begin{split}
&||\nabla_y{M_j^c}(\bfa{y},\bfa{p}) - \nabla_y{\Bar{M}}_j^c(\bfa{y},\bfa{p})||_{\bfa{L}^2(Y)} \\ &\leq ||\nabla_y{M}_j^c(\bfa{y},\bfa{p}) - \nabla_y{\Bar{\Bar{M_j^c}}}(\bfa{y},\bfa{p})||_{\bfa{L}^2(Y)}
 + ||\nabla_y{\Bar{{M_j^c}}}(\bfa{y},\bfa{p}) - \nabla_y{\Bar{\Bar{M_j^c}}}(\bfa{y},\bfa{p})||_{\bfa{L}^2(Y)} \\
 & \leq C_{2} 2^{-L} + \gamma C_{2,l-1} 2^{(-L-l+1)}\,,
\end{split}
\end{equation}
\end{comment}
where $C_1,\ C_2$ are from  (\ref{eq:main44''}) of Lemma \ref{theorem5}.
Recall that by definitions (\ref{dc}) and (\ref{eq:main25''}) respectively, we have in $\mathcal{V}$ (for $j = 1,2$) that 
\beq\label{dc-db}
\bsp
%dc, dbc recall
 {N^i_j}^c(\cdot,p_j)={N^i_j}(\cdot,p)-{N^i_j}(\cdot,p_j'), \ 
 %dbc
 {\Bar{N^i_j}}^c(\cdot,p_j)={\Bar{N}^i_j}(\cdot,p_j)-{\Bar{N}^i_j}(\cdot,p_j')\,.
 %{M_j^c}(\cdot,\bfa{p})={M_j}(\cdot,\bfa{p})-{M_j}(\cdot,\bfa{p}'),\\
 %dbc
 %{\Bar{M_j^c}}(\cdot,\bfa{p})={\Bar{M}_j}(\cdot,\bfa{p})-{\Bar{M}_j}(\cdot,\bfa{p}'),
 \end{split} 
 \eeq
From these expressions \eqref{dc-db}, (\ref{eq:main55'}) and (\ref{eq:main57}), we obtain
\begin{align}
 \label{eq:main56'}
\begin{split}
&||\nabla_y\bar{N^{i}_j}(\bfa{y},p_j)-\nabla_yN^i_j(\bfa{y},p_j)||_{\bfa{L}^2(Y)} \\
& \leq \norm{\nabla_y\left((N^i_j(\bfa{y},p_j)- N^i_j(\bfa{y},p_j')) - (\bar{N^{i}_j}(\bfa{y},p_j)-\bar{N^{i}_j}(\bfa{y},p_j'))\right)}_{\bfa{L}^2(Y)} \\
& \hspace{10pt} + ||\nabla_y(N^i_j(\bfa{y},p_j')-\bar{N^{i}_j}(\bfa{y},p_j'))||_{\bfa{L}^2(Y)} \\
 & = || \nabla_y{N^{i}_j}^c(\bfa{y},p_j) - \nabla_y{\Bar{N^i_j}}^c(\bfa{y},p_j)||_{\bfa{L}^2(Y)} + 
 ||\nabla_y\bar{N^{i}_j}(\bfa{y},p_j')-\nabla_yN^i_j(\bfa{y},p_j')||_{\bfa{L}^2(Y)}\\
&\leq C_{1,l}2^{-L}\,,
\end{split}
\end{align}
%
\begin{comment}
\begin{align}\label{eq:main56'} 
\begin{split}
&||\nabla_yM_j(\bfa{y},\bfa{p})-\nabla_y\bar{M_j}(\bfa{y},\bfa{p})||_{\bfa{L}^2(Y)} \\
& \leq \norm{\nabla_y\left((M_j(\bfa{y},\bfa{p})- M_j(\bfa{y},\bfa{p}')) - (\bar{M_j}(\bfa{y},\bfa{p})-\bar{M_j}(\bfa{y},\bfa{p}'))\right)}_{\bfa{L}^2(Y)} \\
& \hspace{10pt} + ||\nabla_y(M_j(\bfa{y},\bfa{p}')-\bar{M_j}(\bfa{y},\bfa{p}'))||_{\bfa{L}^2(Y)} \\
 & = || \nabla_y{M_j^c}(\bfa{y},\bfa{p}) - \nabla_y{\Bar{M_j^c}}(\bfa{y},\bfa{p})||_{\bfa{L}^2(Y)} + 
 ||\nabla_y\bar{M_j}(\bfa{y},\bfa{p}')-\nabla_yM_j(\bfa{y},\bfa{p}')||_{\bfa{L}^2(Y)}\\
&\leq C_{2,l}2^{-L}\,,
\end{split}
\end{align}
\end{comment}
at the $l$th level, where 
\begin{equation}
\label{eq:main55'*} 
\begin{split}
C_{1,l} = (C_{1} + \gamma C_{1,l-1} 2^{-l+1} + C_{1,l-1} )\,.
%
%C_{2,l} = (C_{2} + \gamma C_{2,l-1} 2^{-l+1} + C_{2,l-1} )\,.
\end{split}
\end{equation}

Similarly, $||\nabla_y\bar{M_j}(\bfa{y},\bfa{p})-\nabla_yM_j(\bfa{y},\bfa{p})||_{\bfa{L}^2(Y)} \leq C_{2,l} 2^{-L}\,.$
\end{proof}
%%%%%%

\bigskip

Let the assumptions of Lemma \ref{theorem4} hold, we now prove Theorem \ref{maintheorem} (convergence results for the hierarchical solve).  For convenience, we state Theorem \ref{maintheorem} again.
%{\bf Theorem \ref{maintheorem}.}
\begin{theorem}[Theorem \ref{maintheorem}]
%Moreover, we assume that $\bar{N^{i}_j}(\bfa{y},p_j)$ and $\bar{M_j}(\bfa{y},\bfa{p})$ have been constructed as above for $p_j \in \mathcal{S}_1^l$ and $\bfa{p}=(p_1,p_2) \in \bfa{\mathcal{S}}_2^l\,.$  
For some positive constants $C_{1*}\,, \, C_{2*}$, which depend only on the coefficients $k_1,k_2$ (so, the cell operator of \eqref{eq:cell}) respectively, we have the following error estimates:
\begin{subequations}
\label{eq:main55'**B} 
\begin{align}
\label{mainthm1}
&||\nabla_yN^i_j(\bfa{y},p_j)-\nabla_y\bar{N^{i}_j}(\bfa{y},p_j)||_{\bfa{L}^2(Y)}
%, ||\nabla_yN^i_2(\bfa{y},p_2)-\nabla_y\bar{N^{i}_2}(\bfa{y},p_2)||_{\bfa{L}^2(Y)} 
\leq C_{1*} l 2^{-L}\,,\\
\label{mainthm2}
&||\nabla_yM_j(\bfa{y},\bfa{p})-\nabla_y\bar{M_j}(\bfa{y},\bfa{p})||_{\bfa{L}^2(Y)}
%, ||\nabla_yM_2(\bfa{y},\bfa{p})-\nabla_y\bar{M_2}(\bfa{y},\bfa{p})||_{\bfa{L}^2(Y)} 
\leq C_{2*} l 2^{-L}\,.
\end{align}
\end{subequations}
\end{theorem}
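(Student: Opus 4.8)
The plan is to derive Theorem \ref{maintheorem} directly from Lemma \ref{theorem4} by resolving the recursion \eqref{eq:main55'*} for the level-dependent constants and showing they grow at most linearly in the level $l$. Recall that Lemma \ref{theorem4} already yields $\|\nabla_y\bar{N^i_j}(\bfa{y},p_j)-\nabla_yN^i_j(\bfa{y},p_j)\|_{\bfa{L}^2(Y)}\le C_{1,l}2^{-L}$ when $p_j\in\mathcal{S}_1^l$ (and the analogous bound for $M_j$), together with
\[
C_{1,l} = C_1 + (1+\gamma 2^{-l+1})\,C_{1,l-1}\,,\qquad C_{2,l} = C_2 + (1+\gamma 2^{-l+1})\,C_{2,l-1}\,,
\]
where $C_1,C_2$ come from Lemma \ref{theorem5}, $\gamma$ from \eqref{eq:main56}, and the base values $C_{1,1},C_{2,1}$ from the case $l=1$; all of these depend only on the coefficients $k_1,k_2$, i.e.\ on the cell operator of \eqref{eq:cell}, and none of them on $L$ or $l$.

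First I would bound the accumulated product appearing when the recursion is unrolled. Writing $a_j:=1+\gamma 2^{-j+1}$ and $P_{k,l}:=\prod_{j=k+1}^{l}a_j$ (with $P_{l,l}=1$), the elementary inequality $\log(1+x)\le x$ together with the geometric sum $\sum_{j\ge 2}\gamma 2^{-j+1}=\gamma$ gives $P_{k,l}\le\exp(\gamma)=:B<\infty$, a bound independent of $L$ and $l$. A straightforward induction on $l$ then shows $C_{1,l}=C_{1,1}P_{1,l}+C_1\sum_{k=2}^{l}P_{k,l}\le B\,C_{1,1}+C_1(l-1)B\le C_{1*}\,l$, where $C_{1*}:=B\,(C_1+C_{1,1})$, and likewise $C_{2,l}\le C_{2*}\,l$ with $C_{2*}:=B\,(C_2+C_{2,1})$. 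Since $B,C_1,C_2,C_{1,1},C_{2,1}$ depend only on the cell operator, so do $C_{1*},C_{2*}$. Substituting these bounds into the conclusion of Lemma \ref{theorem4} proves \eqref{mainthm1}--\eqref{mainthm2}. I would close by noting that every macropoint in \eqref{dU} lies at some level $l\le L$, so $C_{1*}\,l\,2^{-L}\le C_{1*}\,L\,2^{-L}$ shows the hierarchical solve attains, up to the harmless factor $L$, the same $\mathcal{O}(2^{-L})$ accuracy as the full fine-mesh solve recorded in \eqref{eq:FEapprox2}.

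The main obstacle here is bookkeeping rather than hard analysis: one must carefully track that every constant feeding the recursion — $C_1,C_2$ (from C\'{e}a's lemma and the $H^2$-regularity estimate of Lemma \ref{lemma_clips}), $\gamma$ (from the coercivity and Lipschitz estimate \eqref{eq:main56}), and the base constants $C_{1,1},C_{2,1}$ — is genuinely independent of the number of levels $L$, so that the telescoping product $\prod_j a_j$ cannot secretly grow with $L$. Granting this, the key observation is that iterating $C_{1,l}=C_1+a_l C_{1,l-1}$ with $\sum_l(a_l-1)<\infty$ produces only \emph{linear} growth in $l$, not the exponential growth one might naively fear, and this is exactly what makes $C_{1*}$ and $C_{2*}$ uniform and gives the claimed $l\,2^{-L}$ rate.
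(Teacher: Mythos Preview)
Your proof is correct and follows the same overall strategy as the paper: both start from Lemma \ref{theorem4} and its recursion \eqref{eq:main55'*}, and both reduce the theorem to showing $C_{1,l}\le C_{1*}\,l$ (and likewise for $C_{2,l}$) with $C_{1*}$ independent of $l$ and $L$.

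The difference lies only in how that recursion is resolved. The paper fixes a threshold $\bar{l}$ so that $(l-1)2^{-l+1}<\tfrac{1}{2\gamma}$ for $l-1>\bar{l}$, sets $C_{1*}=\max\bigl\{\max_{0\le l-1\le\bar l}\tfrac{C_{1,l-1}}{l-1},\,2C_1\bigr\}$, and checks inductively that $C_{1,l}=C_1+\gamma C_{1,l-1}2^{-l+1}+C_{1,l-1}\le \tfrac{C_{1*}}{2}+\tfrac{C_{1*}}{2}+(l-1)C_{1*}=C_{1*}\,l$. You instead unroll the recursion explicitly, bound the telescoping product $\prod_j(1+\gamma 2^{-j+1})\le e^{\gamma}=:B$ via $\log(1+x)\le x$ and summability of $\gamma 2^{-j+1}$, and read off $C_{1,l}\le B\,C_{1,1}+B\,C_1(l-1)\le B(C_1+C_{1,1})\,l$. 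Your route is a bit cleaner and gives an explicit value for $C_{1*}$; the paper's threshold argument avoids the infinite product but needs a case split. Either way the essential observation is the same: the extra factor $\gamma 2^{-l+1}$ in the recursion is summable in $l$, so the growth of $C_{1,l}$ is linear rather than exponential.
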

%\todo{Why $l$ here, while $l+1$ in the same Theorem \ref{eq:main55'**} right before Theorem \ref{theoremdegree1}?  I guess it should be $l$ (as in \cite{brown13}, Section 3.2).}

\begin{proof}
%{\bf \it Proof of Theorem \ref{maintheorem}}.
We consider a constant $\bar{l}$ %($< l-1$) 
independent of $L$ and $l$ such that for $l-1 > \bar{l}$,
\begin{equation}\label{barl}
(l-1)2^{-l+1} < \dd \frac{1}{2\gamma}\,.
\end{equation} Involving $l-1 \leq \bar{l}$, we let
\begin{equation}
\label{eq:main55*} 
\begin{split}
C_{1*} = \displaystyle\max \bigg\{\max_{0 \leq l-1 \leq \bar{l}} \Big\{\frac{C_{1,l-1}}{l-1}\Big\}, 2 C_1\bigg\},
\end{split}
\end{equation}
where $C_{1,l-1}$ and the constant $C_1$ (independent of $l$) are from (\ref{eq:main55'*}) and \eqref{eq:main44''} respectively, as in the proof of Lemma \ref{theorem4}. Now, we show by induction that
\begin{equation}
\label{eq:main55**} 
\begin{split}
||\nabla_yN^i_j(\bfa{y},p_j)-\nabla_y\bar{N^{i}_j}(\bfa{y},p_j)||_{\bfa{L}^2(Y)} \leq C_{1*} l 2^{-L}\,.
\end{split}
\end{equation}
 
First, this inequality \eqref{eq:main55**} is easily seen to hold for all $l-1 \leq \bar{l}$.  Indeed, by \eqref{eq:main51'} of Lemma \ref{theorem4} (for the level $\mathcal{S}_1^{l-1}$ of $p_j'$ in $U_{1,L}$) and by (\ref{eq:main55*}) respectively, we get
%first test cases
\begin{equation}
 ||\nabla_y\bar{N^{i}_j}(\bfa{y},p_j')-\nabla_yN^i_j(\bfa{y},p_j')||_{\bfa{L}^2(Y)} \leq C_{1,l-1} 2^{-L} \leq C_{1*}(l-1) 2^{-L}\,.
\end{equation} 

Second, suppose that %the conclusion 
\eqref{eq:main55**} holds for all $(l-1)$th induction steps (including the cases $ l-1 > \bar{l}$). Then, using \eqref{barl} for the induction step $l$th, we obtain from  (\ref{eq:main55'*}) that
\begin{align}
\label{eq:main55***} 
\begin{split}
C_{1,l} &= (C_{1} + \gamma C_{1,l-1} 2^{-l+1} + C_{1,l-1} ) \leq \frac{C_{1*}}{2} + \frac{2^{l-1}}{2(l-1)}\cdot C_{1,l-1} \cdot 2^{-l+1} + (l-1) C_{1*}
%&\leq \frac{C_{1*}}{2} + \frac{C_{1*}}{2} + (l-1) C_{1*}
\leq C_{1*} l\,,
\end{split}
\end{align}
and the expected inequality \eqref{mainthm1} follows from \eqref{eq:main56'}.  Similarly, \eqref{mainthm2} holds.
%we can deduce that $C_{2,l} \leq C_{2*} l\,.$
\end{proof}

%%%%%%%%
\section{Global convergence of Picard linearization procedure}\label{cp}
%because of subtraction and cancellation, either k_i or \bfa{K}_i disappear and do not affect the proof.
%
%Picard iteration convergence is defined by the residual L2 norm at the end of the time-step.

In this appendix, based on \cite{rpicardc}, we will prove the global convergence of Picard linearization (presented in Section \ref{pre}) for the system \eqref{r1elh} based on \eqref{r1ed}, with $i = 1,2$. 

In \eqref{r1}, 
%(and thus \eqref{r1e}, \eqref{r1ed} and \eqref{r1elh}), 
we assume that the conductivity coefficients $\kappa_i$ satisfies $ 0< \kappa_i \leq d_i\,,$ for some $d_i >0\,.$  Here, we only consider constant conductivities $\kappa_i > 0$ (see \cite{LPicard} for nonconstant conductivities).  For each $i,j=1,2\,,$ the vector-valued function $\bfa{b}_{ij}$ and the function $c_i$ are nonlinear but globally Lipschitz continuous with respectively Lipschitz constants $L_{b_{ij}}, L_{c_i}$ (without any explicit dependence on $\bfa{x}$ and $t$).  Moreover, we assume that each $p_i$ is positive and that $\bfa{b}_{ij}, c_i$ are respectively bounded above by very large constants $\beta_{ij}, C_i\,.$
%such that $\| -\bfa{b}_{ij}(\bfa{p}_{s+1}) \cdot \nabla p_j^{n+1} + \bfa{b}_{ij}(\bfa{p}^n) \cdot \nabla p_{j,s+1}\| \leq \beta_{ij} \|\nabla p_j^{n+1} - \nabla p_{j,s+1} \|
%positive

For simplicity, the subscript $h$ and ($s+1$) are omitted from the Picard iteration (\ref{r1elh}).  We denote by $\| \cdot \|$ the $L^2(\Omega)$-norm and by $(\cdot , \cdot)$ the inner product in $L^2(\Omega)$.  Then, subtracting \eqref{r1ed}
%so either k_i or \bfa{K}_i disappear and do not affect the proof
from \eqref{r1elh} and choosing proper $\phi_i = p_i^{n+1} - p_{i,s+1}\,,$
%, $\phi_2 = p_2^{n+1} - p_{2,s+1}$, 
we get in $\bfa{V}_h\,:$
\begin{align}\label{in1}
 \begin{split}
  \frac{1}{\tau} \|p_i^{n+1} - p_{i,s+1}\|^2
  %&\leq (k_1(p_1^n) \nabla p_1^{n+1} - k_1(p_{1,s+1}) \nabla p_{1,s+1}\,, \nabla p_1^{n+1} - \nabla p_{1,s+1}) \\
   \leq &-b_i(\bfa{p}^{n+1},p_i^{n+1} - p_{i,s+1};\bfa{p}^n) - 
  q_i(\bfa{p}^{n+1},p_i^{n+1} - p_{i,s+1};\bfa{p}^n)\\
  &+b_i(\bfa{p}_{s+1},p_i^{n+1} - p_{i,s+1};\bfa{p}_{s+1}) + 
  q_i(\bfa{p}_{s+1},p_i^{n+1} - p_{i,s+1};\bfa{p}_{s+1})\,.
  \end{split}
  \end{align}
  %
  \begin{comment}
  \begin{align}\label{in1}
  \begin{split}
  &\frac{1}{\tau} \|p_2^{n+1} - p_{2,s+1}\|^2\\ 
  %&\leq (k_2(p_2^n) \nabla p_2^{n+1} - k_2(p_{2,s+1}) \nabla p_{2,s+1}\,, \nabla p_2^{n+1} - \nabla p_{2,s+1}) \\
  & \leq (\bfa{b}_3(p_1^n,p_2^n) \cdot \nabla p_2^{n+1} - \bfa{b}_3(p_{1,s+1},p_{2,s+1}) \cdot \nabla p_{2,s+1}\,, p_2^{n+1} - p_{2,s+1})\\
  & \quad + (\bfa{b}_4(p_1^n,p_2^n) \cdot \nabla p_1^{n+1} - \bfa{b}_4(p_{1,s+1},p_{2,s+1}) \cdot \nabla p_{1,s+1}\,, p_2^{n+1} - p_{2,s+1})\\
  & \quad +(c_2(p_1^n,c_2^n)(p_2^{n+1} - p_1^{n+1}) - c_2(p_{1,s+1},p_{2,s+1})(p_{2,s+1}-p_{1,s+1})\,,p_2^{n+1} - p_{2,s+1})\,.
 \end{split}
\end{align}
\end{comment}

Using \eqref{ai}--\eqref{qi}, the right-hand side of \eqref{in1} is denoted by $RS$ and is written as

\vspace{-15pt}

\begin{align}\label{rhsin1}
 \begin{split}
  RS& = \sum_j \left\{((-\bfa{b}_{ij}(\bfa{p}^n) + \bfa{b}_{ij}(\bfa{p}_{s+1})) \cdot (\nabla p_j^{n+1} - \nabla p_{j,s+1})\,,p_i^{n+1}- p_{i,s+1}) \right.\\
  &\left. \hspace{40pt}  + 2((-\bfa{b}_{ij}(\bfa{p}^n) +\bfa{b}_{ij}(\bfa{p}_{s+1})) \cdot \nabla p_{j,s+1}\,,p_i^{n+1}- p_{i,s+1})\right.\\
  &\left. \hspace{40pt}  +(-\bfa{b}_{ij}(\bfa{p}_{s+1}) \cdot \nabla p_j^{n+1} + \bfa{b}_{ij}(\bfa{p}^n) \cdot \nabla p_{j,s+1}\,,p_i^{n+1}- p_{i,s+1})\right\}\\
  & +\sum_j \left\{ ((-c_i(\bfa{p}^n) +c_i(\bfa{p}_{s+1})) \cdot ((p_i^{n+1} - p_j^{n+1})-(p_{i,s+1} - p_{j,s+1})), p_i^{n+1} - p_{i,s+1})\right.\\
  &\left. \hspace{40pt}  + 2((-c_i(\bfa{p}^n) + c_i(\bfa{p}_{s+1}))\cdot (p_{i,s+1} - p_{j,s+1}), p_i^{n+1} - p_{i,s+1})\right.\\
  &\left. \hspace{40pt} +(-c_i(\bfa{p}_{s+1})(p_i^{n+1} - p_j^{n+1}) + c_i(\bfa{p}^n)(p_{i,s+1} - p_{j,s+1}), p_i^{n+1} - p_{i,s+1})\right\}\,.
 \end{split}
\end{align}

Note that $\| \bfa{p}^{n} - \bfa{p}_{s+1}\| =
\displaystyle \sum_j \|p_j^{n} - p_{j,s+1}\|\,.$  
%As in \cite{GalerkinFEM}, 
Assume that there exist sufficient small constant $M_j > 0$ and large constant $D_j > 0$ such that $M_j \leq \|p_j^{n+1} - p_{j,s+1}\| \leq \hat{C}  \|\nabla (p_j^{n+1} - p_{j,s+1})\| \leq D_j\,,$ for $j=1,2\,.$ 
%employing the fact that all norms on any finite dimensional space are equivalent.
We thus obtain from \eqref{in1} and \eqref{rhsin1} that
\begin{align}\label{in1xi}
 \begin{split}
  &\frac{1}{\tau} \|p_i^{n+1} - p_{i,s+1}\|^2 \\
  & \leq \sum_j \left \{ \|\bfa{b}_{ij}(\bfa{p}^n) - \bfa{b}_{ij}(\bfa{p}_{s+1})\| ( \|\nabla p_j^{n+1} - \nabla p_{j,s+1}\| + 2 \|\nabla p_{j,s+1}\|_{\infty}) \,  \|p_i^{n+1} - p_{i,s+1}\| \right. \\ 
  & \left. \hspace{40pt} + \beta_{ij} \|\nabla p_j^{n+1} - \nabla p_{j,s+1}\| \, \|p_i^{n+1} - p_{i,s+1}\| \right\}\\
  & \quad + \sum_j \left \{ \|(c_i(\bfa{p}^n) - c_i(\bfa{p}_{s+1})\| \, \|(p_i^{n+1} - p_j^{n+1}) - (p_{i,s+1}-p_{j,s+1})\| \, \|p_i^{n+1} - p_{i,s+1}\|\right.\\
  & \left. \hspace{45pt} + \|(c_i(\bfa{p}^n) - c_i(\bfa{p}_{s+1})\| \, (2 \|p_{i,s+1} - p_{j,s+1}\|_{\infty}) \, \|p_i^{n+1} - p_{i,s+1}\| \right.\\
  & \left. \hspace{45pt} + C_i\|(p_i^{n+1} - p_j^{n+1}) - (p_{i,s+1}-p_{j,s+1})\| \, \|p_i^{n+1} - p_{i,s+1}\|\right \}
  \end{split}
  \end{align}
  %%%
  \begin{align*}
   \begin{split}
  & \leq \sum_j \left \{L_{b_{ij}}\,\| \bfa{p}^n - \bfa{p}_{s+1}\| \, (D_j + 2\|\nabla p_{j,s+1}\|_{\infty}) \|p_i^{n+1} - p_{i,s+1}\|
  + \beta_{ij} \, D_j \|p_i^{n+1} - p_{i,s+1}\|\right \}\\
  & \hspace{7pt} +  \sum_j \left\{L_{c_i}\, \| \bfa{p}^n - \bfa{p}_{s+1}\|  \, (D_j + 2\|\nabla p_{j,s+1}\|_{\infty} ) \|p_i^{n+1} - p_{i,s+1}\| + C_i D_j \|p_i^{n+1} - p_{i,s+1}\|\right\}\,,
  \end{split}
  \end{align*}
  where the constants $\beta_{ij}, C_i$ are assumed to be very large so that the second inequality of \eqref{in1xi} holds.
  
  Upon reorganizing the last inequality of \eqref{in1xi}, we get
  \begin{align}\label{re1}
   \begin{split}
    \|p_i^{n+1} - p_{i,s+1}\| 
    &\leq \sum_j \left\{ L_{b_{ij}} \frac{\tau D_i }{M_i-\tau (\beta_{i1} D_1 + \beta_{i2} D_2)} \, (D_j + 2\|\nabla p_{j,s+1}\|_{\infty})  \, \| \bfa{p}^n - \bfa{p}_{s+1}\|\right \} \\
    &+ \sum_j \left \{L_{c_i} \frac{\tau D_i }{M_i - \tau  C_i (D_1 + D_2)} (D_j + 2\|\nabla p_{j,s+1}\|_{\infty} )  \| \bfa{p}^n - \bfa{p}_{s+1}\|\right \}\,.
   \end{split}
  \end{align}

%$0<M_1 \leq \|p_1^{n+1} - p_{1,s+1}\|$
%very small $\tau$
  
%\|W_1(x) + W_2(x)\|_p \leq \|W_1\|_p + \|W_2\|_p Folland p183 Minkowski 
%
%%\|W_1(x), W_2(x)\|_p = \|W_1\|_p + \|W_2\|_p 
%%https://math.stackexchange.com/questions/457175/norm-of-a-vector-valued-function
Since $\| \bfa{p}^{n+1} - \bfa{p}_{s+1}\| =
\|p_1^{n+1} - p_{1,s+1}\| + \|p_2^{n+1} - p_{2,s+1}\|$, it follows that
\begin{equation}\label{re3}
 \| \bfa{p}^{n+1} - \bfa{p}_{s+1}\| \leq 
 L \frac{\tau D}{M - 2\tau  \beta D} (8D + 8 \|\nabla \bfa{p}_{s+1} \|_{\infty}) \| \bfa{p}^n - \bfa{p}_{s+1}\|\,,
\end{equation}
where $M= \min\{M_j\}\,, D = \max\{D_j\}\,, \beta = \max \{ \beta_{ij}, C_i\}\,,$ $L= \max \{ L_{b_{ij}}, L_{c_i}\}\,.$
%https://www.maths.lancs.ac.uk/~jameson/picard.pdf
%For y = (y1, y2, . . . , yn) ∈ Rn
%write ||y|| = max |yj|
%This is a “norm” on Rn giving
%||x − y|| as a notion of generalized “distance” between x and y. Note that 
%||x − y|| ≤ M
%means that |xj − yj|≤ M for each j.

Let $\bfa{U}$ be the exact solution to the original problem (\ref{r1}).  Then, using the error estimate in \cite{GalerkinFEM} (Theorem 1.5), we obtain
%\cite{feinverse}
%inverse Poincare 1.12 p4
%all norms on a finite dimensional space are equivalent
%(1.53) second induced ineq p16
\begin{equation}\label{re4}
 \| \nabla \bfa{p}_{s+1} \|_{\infty} \leq \|\nabla(\bfa{U}(t_{s+1}) - \bfa{p}_{s+1}) \|_{\infty} + \|\nabla \bfa{U}(t_{s+1})\|_{\infty} \leq C'(\bfa{U})(h + \tau) + \|\nabla \bfa{U}(t_{s+1})\|_{\infty}\,,
\end{equation}
for some constant $C'(\bfa{U})\,,$ where $h$ is the fine scale defined in \eqref{Vh} and $\tau$ is the time step from \eqref{r1ed}.  Redefining constants properly in \eqref{re3}, we finally reach
\begin{equation}\label{re5}
 \| \bfa{p}^{n+1} - \bfa{p}_{s+1}\| \leq 
 C L \frac{\tau D M^{-1}}{1 - 2\tau  \beta D M^{-1}} (1 + (C')(\tau + h)) \| \bfa{p}^{n} - \bfa{p}_{s+1}\|: = \lambda \| \bfa{p}^n - \bfa{p}_{s+1}\|\,.
\end{equation}
The coefficient $\lambda$ will be less than $1$ for sufficient small $\tau$ and $h$, thus it holds that the procedure converges.  In particular, $\lambda \to 0$ as $\tau \to 0$ and $h \to 0$ simultaneously.

%%%%%%%%%%

%\todo{TINA HAS DONE WITH MATH CHECKING OF THE WHOLE PAPER.}

%%%%%
\bigskip

%\appendix\section{Derivation of homgenized system}

%%%%

\bibliographystyle{plain}
\bibliography{referencesRichards}

\end{document}